\newtheorem{theorem}{Theorem}
\newtheorem{proposition}{Proposition}
\newtheorem{definition}{Definition}
\newtheorem{remark}{Remark}
\newtheorem{example}{Example}
\newcommand{\WMCIG}{WMCIG\xspace}
\newcommand{\BIIG}{BIIG\xspace}
\begin{document}
\author[1]{K\"ubra Tan{\i}nm{\i}\c{s}\thanks{kuebra.taninmis\_ersues@jku.at}}
\author[2]{Markus Sinnl\thanks{markus.sinnl@jku.at}}

\affil[1]{Institute of Production and Logistics Management, Johannes Kepler University Linz, Austria}

\affil[2]{Institute of Production and Logistics Management, Johannes Kepler University Linz, Linz, Austria \\
		JKU Business School, Johannes Kepler University Linz, Austria}

\title{A branch-and-cut algorithm for submodular interdiction games}

\date{}

\maketitle
\begin{abstract}

	Many relevant applications from diverse areas such as marketing, wildlife conservation or defending critical infrastructure can be modeled as interdiction games. In this work, we introduce interdiction games whose objective is a monotone and submodular set function. Given a ground set of items, the leader interdicts the usage of some of the items of the follower in order to minimize the objective value achievable by the follower, who seeks to maximize a submodular set function over the uninterdicted items subject to knapsack constraints. 
	
	We propose an exact branch-and-cut algorithm for these kind of interdiction games.
	The algorithm is based on interdiction cuts which allow to capture the followers objective function value for a given interdiction decision of the leader and exploit the submodularity of the objective function. We also present extensions and liftings of these cuts and discuss additional preprocessing procedures. 
	
	We test our solution framework on the weighted maximal covering interdiction game and the bipartite inference interdiction game.	
	 For both applications, the improved variants of our interdiction cut perform significantly better than its basic version. For the weighted maximal covering interdiction game for which a mixed-integer bilevel linear programming (MIBLP) formulation is available, we compare the results with those of a state-of-the-art MIBLP solver. While the MIBLP solver yields a minimum of 54\% optimality gap within one hour, our best branch-and-cut setting solves all but 4 of 108 instances to optimality with a maximum of 3\% gap among unsolved ones.

\end{abstract}


{\bf Keywords:}
{Interdiction games, Submodular optimization, Bilevel optimization, Branch-and-cut}

\section{Introduction and Problem Definition}
A bilevel optimization problem involves two decision makers with conflicting objectives. The first decision maker who is called the \textit{leader} integrates the response of the \textit{follower}, i.e., the second decision maker, into her decision making process. While the leader has complete knowledge of the objective and the constraints of the follower, once she makes a decision the follower has the full information of her decision and decides accordingly. In other words, they play a sequential game which is known as a Stackelberg game \citep{von1952theory}. While many real world problems involving competition and non-cooperation can be addressed as bilevel optimization models, even the simplest version of bilevel problems is known to be $\mathcal{NP}$-hard \citep{jeroslow1985polynomial,ben1990computational}. A recent survey on bilevel optimization is presented by \citet{dempe2020bilevel}.

In this paper, we address a special class of the bilevel optimization problems called \textit{Interdiction Games} (IG). This kind of problems are two-player zero-sum Stackelberg games and have received considerable attention in recent years. In an IG, the aim of the leader is to attain the maximum deterioration in the follower's optimal objective value by \textit{interdicting} her decisions. IGs have applications in diverse areas such as marketing \citep{denegre2011interdiction}, wildlife conservation \citep{mc2016preventing, sefair2017defender} or defending critical infrastructure \citep{brown2006defending}.
Most of the IGs that have been studied so far are related to network interdiction where certain components of a network such as its edges or nodes are interdicted by the leader so that the follower cannot use them to achieve its objective. \citet{smith2020survey} present a comprehensive survey on network interdiction models. Other popular IGs are the knapsack interdiction problem \citep{denegre2011interdiction}, or the facility interdiction problem and its variants \citep{church2004identifying, aksen2014bilevel}. A more detailed review of IGs and state-of-the-art solution approaches is provided in Section \ref{section:Literature}.

\subsection{Problem Definition}
\label{section:ProblemDefinition}
In this study, we consider the class of IGs with a \emph{submodular} and \emph{monotone} objective function. Given a finite ground set $N$ (of \emph{items}), a function $z:2^N\rightarrow \mathbb{R}$ is called submodular if 
$z(S\cup \{i\}) -z(S)\geq z(T\cup \{i\}) -z(T)$, for all $S\subseteq T \subseteq N$ and $i\in N \setminus T$ (alternative definitions by \cite{nemhauser1978analysis} are provided in Section \ref{section:preliminaries}). The function $z$ is also monotone (non-decreasing) if $z(S)\leq z(T)$ for all $S\subseteq T \subseteq N$. 

Many problems including the maximal covering problem \citep{church1974maximal, vohra1993probabilistic}, uncapacitated facility location problem \citep{nemhauser1981maximizing}, influence maximization problem under linear threshold and independent cascade models \citep{kempe2003maximizing}, bipartite inference problem \citep{sakaue2018accelerated,salvagnin2019some}, assortment optimization problem \citep{kunnumkal2019tractable}, maximum capture location problem \citep{ljubic2018outer} or minimum variance sensor placement problem \citep{krause2008robust}, have submodular objective functions. Rank functions and weighted rank functions of matroids are also submodular \citep{schrijver2003combinatorial}.

In particular, we address IGs whose follower seeks to maximize a submodular and non-decreasing set function subject to knapsack constraints, which is known to be a $\mathcal{NP}$-hard problem \citep{cornuejols1977location}. The leader of the game interdicts the usage of a set of items in $N$ in order to minimize the follower's optimal objective value. The problem addressed is formulated as
\begin{align}
&\min_{x\in X} \, \max \big\{z(S): S\subseteq {N\setminus N_x}, C(S)\leq Q, \big\}
\label{eq:P1}
\end{align}
where $N_x=\{i\in N: x_i=1\}$ is the set of items that are not available to the follower under the interdiction strategy $x$. The set $X=\{x\in \{0,1\}^{n}:Ax\leq b\}$ is the feasible region of the leader, where $A$ and $b$ are a real valued matrix and a vector of appropriate dimensions and $n$ denotes the number of leader variables. The follower is constrained by knapsack constraints $C(S)\leq Q$ where $C(S)=\{c_\ell(S)= \sum_{i\in S} c^\ell_{i}, \ell=1, \ldots, L \}$ with $c^\ell_{i}\geq 0$ $\forall i, \ell$, and $Q$ is a vector of appropriate dimension. 

We note that 
all the problems mentioned above fall under the structure of Problem \eqref{eq:P1} and interdiction versions of these problems can be solved with our solution approach. 

\subsection{Contribution and Outline}

The main contribution of this study is an exact method for solving IGs with a submodular and non-decreasing objective function as given in \eqref{eq:P1}. Using properties of submodular functions we introduce \textit{submodular interdiction cuts (SICs)}. They are based on the value of the contribution to the objective value due to adding an item to a given subset of $N$, which is called \textit{marginal gain}. Problem \eqref{eq:P1} is reformulated as a single level problem using our SICs and solved within a branch-and-cut scheme.
We also propose various ways to lift our SICs and test the effectiveness of the resulting solution algorithms on the weighted maximal covering interdiction game and the bipartite inference interdiction game.

The outline of the paper is as follows. In the remainder of this section, we give a discussion of previous and related work. In Section \ref{section:SubmodularCuts}, we first recall basic properties of submodular functions and then introduce our basic SICs and show how to obtain a single level reformulation of Problem \eqref{eq:P1} using these cuts. Finally, we also introduce the problems used in the computational study in this section.
%
 %
In Section \ref{section:improvements}, we propose improved, lifted and alternative versions of our SICs and also give illustrational examples of their occurrence in the weighted maximal covering interdiction game and the bipartite inference interdiction game. Section \ref{section:implementation} contains implementation details of our branch-and-cut solution framework, including separation procedures for our SICs.
In Section \ref{section:Computational}, we present the computational results of our approach on the problem families selected as test bed. For the weighted maximal covering interdiction game, for which a mixed-integer bilevel linear programming (MIBLP) formulation is possible, we compare our approach against a state-of-the-art MIBLP solver.
 We conclude the paper with possible future research directions in Section \ref{section:Conclusion}.

\subsection{Previous and Related Work}
\label{section:Literature}

In some cases IGs can be formulated as MIBLPs, in which case they are solvable via general purpose MIBLP solvers such as the ones proposed by \cite{xu2014exact, lozano2017value, fischetti2017new, tahernejad2020branch}. 
On the other hand, there exist also specialized methods either for a specific problem type or for more general IGs. 
In various studies, the IG addressed has a linear follower problem and is formulated as a single level optimization problem via linear programming duality. 
This is the case in the works of \cite{wollmer1964removing}, \cite{wood1993deterministic} and \cite{morton2007models} where the maximum flow interdiction problem is addressed with the aim of analyzing the sensitivity of a transportation network, reducing the flow of drugs on a network, and stopping nuclear smuggling, respectively. 
Similarly, the shortest path interdiction problem \citep{golden1978problem,  israeli2002shortest, bayrak2008shortest} and the node deletion problem \citep{shen2012exact} which aims to damage the connectivity of a network, can be solved via duality-based approaches. 
Although this approach has been frequently used in IG modeling, many real world problems give rise to mixed-integer lower-level problems. Among them, there are problems that can still be formulated as an MIP due to their special structure such as the $r$-median interdiction problem \citep{church2004identifying}. Using the closest assignment constraints, the follower decision can be integrated to the leader's problem. 
Some variants like the one with partial interdiction addressed in \cite{aksen2014bilevel} still require MIBLP formulations. 

The $r$-interdiction covering problem introduced in \cite{church2004identifying} involves finding the facilities to interdict to maximize the coverage reduction. 
It has applications in determining critical existing emergency facilities such as fire stations or emergency communication systems.  
Since it involves a single decision maker, the attacker, the problem is not exactly an IG and can be formulated as an MIP. 
The IG version of this problem with a defender locating facilities after interdiction fulfills the requirements of our framework and is one of the applications we consider in our computational study (see Section \ref{section:Applications}).  
Facility location interdiction problems have also been considered within a fortification setting called \emph{defender-attacker-defender model} where the defender seeks to minimize the damage due to interdiction (see, e.g., \cite{brown2006defending, scaparra2008bilevel, scaparra2008exact, aksen2010budget} for the $r$-interdiction median with fortification; \cite{dong2010model} and \cite{roboredo2019exact} for $r$-interdiction covering with fortification). \cite{cappanera2011optimal} study shortest path interdiction with fortification. \cite{lozano2017backward} propose a sampling based exact method for a more general class of three-level fortification problems.

Another widely studied IG is the knapsack interdiction problem. In one version of this problem the leader's decision affects the follower's budget. \cite{brotcorne2013one} propose a dynamic programming based method and a single level formulation for this version. 
In a more commonly studied version introduced by \cite{denegre2011interdiction} the leader interdicts the usage of some items by the followers, which could have an application in corporate marketing strategies. \cite{denegre2011interdiction} develops a branch-and-cut scheme and \cite{caprara2016bilevel} propose an iterative algorithm for this variant of the knapsack problem. \cite{della2020exact} compute effective lower bounds on the optimal objective and utilize them to design an exact algorithm. 

Another interdiction problem which recently got more attention in literature is the clique interdiction problem. The problem involves minimizing the size of the maximum clique in a network, by interdicting, i.e., removing, a subset of its edges \citep{tang2016class, furini2021branch} or vertices \citep{furini2019maximum}. 

Finally, there also exists work on stochastic and robust versions of interdiction. For example, in \citep{cormican1998stochastic}, a stochastic network interdiction problem is considered.
In \citep{borrero2021modeling}, an attacker affects the objective function of the defender in an uncertain way. Two exact methods are proposed to solve the robust optimization problem of the defender who wants to be prepared for the worst case scenario.

There have been several studies focusing on generic methods to solve IGs. \cite{tang2016class} propose iterative algorithms for IGs with a mixed-integer follower problem. These algorithms are finitely convergent when the leader variables are restricted to take binary values. \cite{taninmis2020improved} improve the algorithm of \cite{tang2016class} for the binary bilevel problem case, using a covering based reformulation of the problem instead of a duality based one. \citet{fischetti2019interdiction} address IGs that satisfy an assumption called \emph{downward monotonicity}. 
They introduce a branch-and-cut approach based on efficient use of \textit{interdiction cuts} which previously have been used within problem specific solution frameworks in several studies including \citet{israeli2002shortest}, \citet{wood2010bilevel} and \citet{caprara2016bilevel}.
The problems we address and design solution approaches for in this work form a more general class of IGs addressed in \citet{fischetti2019interdiction}. The reason is that we allow the objective function of the IG to be linear or non-linear as long as it is submodular and non-decreasing, while a linear objective function of discrete decision variables can equivalently be expressed as a submodular non-decreasing set function given that the objective coefficients are non-negative.

\section{Submodular Interdiction Cuts}
\label{section:SubmodularCuts}

\subsection{Preliminaries on Submodular Functions}
\label{section:preliminaries}
Given a submodular function $z$, let $\rho_i(S)=z(S\cup \{i\})-z(S)$ be the \emph{marginal gain} due to adding $i\in N$ to set $S\subseteq N$. The marginal gain $\rho(\cdot)$ is non-increasing by definition of a submodular function. The following proposition gives alternative definitions for submodular functions.
\begin{proposition}
	\citep{nemhauser1978analysis}. If $z$ is a submodular function, then
	\begin{equation}
	z(T)\leq z(S) +\sum_{i\in T\setminus S}\rho_i(S) - \sum_{i\in S\setminus T}\rho_i(S\cup T \setminus \{i\})  \hspace{1cm} S, T \subseteq N, \label{eq:Nemhauser1}
	\end{equation}
	\begin{equation}
	z(T)\leq z(S) +\sum_{i\in T\setminus S}\rho_i(S\cap T) - \sum_{i\in S\setminus T}\rho_i(S\setminus \{i\})  \hspace{1cm} S, T \subseteq N. \label{eq:Nemhauser2}
	\end{equation}
	\label{prop:submodular}
\end{proposition}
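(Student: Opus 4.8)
The plan is to derive both inequalities \eqref{eq:Nemhauser1} and \eqref{eq:Nemhauser2} by telescoping the marginal gains along a chain of sets that interpolates between $S$ and $T$, using the defining property that $\rho_i(\cdot)$ is non-increasing under set inclusion. The core identity is that for any chain $\emptyset = U_0 \subseteq U_1 \subseteq \cdots \subseteq U_k = U$ obtained by adding elements one at a time, $z(U) - z(\emptyset) = \sum_{j} \rho_{u_j}(U_{j-1})$, and more generally $z(B) - z(A) = \sum_j \rho_{v_j}(A \cup \{v_1,\dots,v_{j-1}\})$ when $A \subseteq B$ and $B \setminus A = \{v_1,\dots,v_m\}$.

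First I would prove \eqref{eq:Nemhauser1}. Write $z(T) - z(S) = \big(z(T) - z(S\cap T)\big) + \big(z(S\cap T) - z(S)\big)$. For the first bracket, go from $S\cap T$ up to $T$ by adding the elements of $T\setminus S$ one at a time; each intermediate set is contained in $S$ \emph{no longer}, but is a subset of a set containing $S$ only at the end, so each marginal gain $\rho_i(\cdot)$ along the way is at most $\rho_i(S)$ by monotonicity of marginals — wait, this needs care: the intermediate sets contain $S\cap T$ but not necessarily $S$. The correct bound is to note each such set is a subset of $S \cup (\text{something})$; the cleaner route is: for the increase from $S\cap T$ to $T$, bound $\rho_i$ at the intermediate set from above by $\rho_i(S\cap T)$, which gives \eqref{eq:Nemhauser2}'s first sum, or bound it below; to get \eqref{eq:Nemhauser1}, instead go from $S$ down. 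So for \eqref{eq:Nemhauser1}: decompose $z(T) - z(S) = \big(z(S\cup T) - z(S)\big) - \big(z(S\cup T) - z(T)\big)$. Expanding $z(S\cup T) - z(S)$ by adding the elements of $T\setminus S$ one by one to $S$, each marginal gain is at most $\rho_i(S)$ since the running set contains $S$; this yields the $+\sum_{i\in T\setminus S}\rho_i(S)$ term as an upper bound on $z(S\cup T)-z(S)$. Expanding $z(S\cup T) - z(T)$ by adding the elements of $S\setminus T$ one by one to $T$, each marginal gain $\rho_i$ at the running set is at least $\rho_i(S\cup T\setminus\{i\})$ since the running set is contained in $S\cup T\setminus\{i\}$ at the step where $i$ is added; this gives $z(S\cup T) - z(T) \ge \sum_{i\in S\setminus T}\rho_i(S\cup T\setminus\{i\})$. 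Subtracting yields \eqref{eq:Nemhauser1}.

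For \eqref{eq:Nemhauser2} I would argue symmetrically using the decomposition $z(T) - z(S) = \big(z(T) - z(S\cap T)\big) - \big(z(S) - z(S\cap T)\big)$: expand $z(T) - z(S\cap T)$ by adding elements of $T\setminus S$ to $S\cap T$, bounding each marginal above by $\rho_i(S\cap T)$ (running set contains $S\cap T$); and expand $z(S) - z(S\cap T)$ by adding elements of $S\setminus T$ to $S\cap T$, bounding each marginal below by $\rho_i(S\setminus\{i\})$ (running set is contained in $S\setminus\{i\}$). This produces the two sums in \eqref{eq:Nemhauser2}. The only genuinely delicate point is keeping track of exactly which ``reference'' set dominates or is dominated by each running set in the telescoping expansion, so that the direction of every inequality matches the signs in \eqref{eq:Nemhauser1}–\eqref{eq:Nemhauser2}; once the chains are set up correctly, everything reduces to a single application of the monotonicity of $\rho_i(\cdot)$ at each step. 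I would also remark that the identity used does not depend on the order in which elements are added, which is why the chain argument is legitimate. Since this is the classical result of \citet{nemhauser1978analysis}, I would likely just cite it and include this telescoping argument as a short self-contained justification.
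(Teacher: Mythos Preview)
Your telescoping argument is correct: the decomposition $z(T)-z(S) = \big(z(S\cup T)-z(S)\big) - \big(z(S\cup T)-z(T)\big)$ for \eqref{eq:Nemhauser1} and $z(T)-z(S) = \big(z(T)-z(S\cap T)\big) - \big(z(S)-z(S\cap T)\big)$ for \eqref{eq:Nemhauser2}, together with the monotonicity of $\rho_i(\cdot)$ applied at each step of the chain, yields exactly the two inequalities. The inclusion checks you perform (running set contains $S$, resp.\ is contained in $S\cup T\setminus\{i\}$, etc.) are all valid.

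Note, however, that the paper itself does \emph{not} prove this proposition: it is stated as a quoted result from \citet{nemhauser1978analysis} and used without argument. So there is no ``paper's own proof'' to compare against; your proposal supplies a self-contained justification where the paper simply cites. This is precisely the classical proof from \citet{nemhauser1978analysis}, so your last remark---that you would cite and include the telescoping as a short justification---matches what the paper does (minus the justification). One stylistic point: the exploratory detour in your write-up (``wait, this needs care\ldots'') should be excised in a final version; go straight to the correct decomposition via $S\cup T$ for \eqref{eq:Nemhauser1} and via $S\cap T$ for \eqref{eq:Nemhauser2}.
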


\begin{proposition}
	\citep{nemhauser1978analysis}.
	If $z$ is a submodular and non-decreasing function, then 
	\begin{equation}
	\rho_i(S) \geq \rho_i(T)\geq 0  \hspace{1cm} S\subseteq T \subseteq N, i\in N, \label{eq:Nemhauser3}
	\end{equation}
	and the last term in \eqref{eq:Nemhauser1} can be removed to obtain the simpler inequality:
	\begin{equation}
	z(T)\leq z(S) +\sum_{i\in T\setminus S}\rho_i(S) \hspace{1cm} S, T \subseteq N. \label{eq:Nemhauser4}
	\end{equation}  
	
	\label{prop:submodular2}
\end{proposition}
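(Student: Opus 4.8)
The plan is to prove the two assertions in turn, deriving the displayed inequality \eqref{eq:Nemhauser4} from the monotonicity of marginal gains \eqref{eq:Nemhauser3}. For \eqref{eq:Nemhauser3} I would argue by cases on the position of $i$. If $i \in N \setminus T$, then since $S \subseteq T$ the inequality $\rho_i(S) \geq \rho_i(T)$ is exactly the defining inequality of submodularity recalled in Section \ref{section:ProblemDefinition}. If instead $i \in T$, then $T \cup \{i\} = T$, so $\rho_i(T) = 0$, and what remains is $\rho_i(S) \geq 0$; this is immediate because $z$ is non-decreasing and $S \subseteq S \cup \{i\}$. The same monotonicity argument applied to $T$ gives $\rho_i(T) \geq 0$ in all cases, which completes \eqref{eq:Nemhauser3}.

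For \eqref{eq:Nemhauser4} there are two equally short routes, and I would likely present the self-contained one. The quick route is to take \eqref{eq:Nemhauser1} from Proposition \ref{prop:submodular} and drop its last term: by the non-negativity of marginal gains just established, $\sum_{i\in S\setminus T}\rho_i(S\cup T \setminus \{i\}) \geq 0$, so deleting this subtracted quantity only weakens the right-hand side, yielding \eqref{eq:Nemhauser4}. The self-contained route is a telescoping argument: enumerate $T \setminus S = \{i_1,\dots,i_r\}$, set $S_0 = S$ and $S_j = S_{j-1}\cup\{i_j\}$, so $S_r = S \cup T$. Then $z(S\cup T) - z(S) = \sum_{j=1}^r \bigl(z(S_j)-z(S_{j-1})\bigr) = \sum_{j=1}^r \rho_{i_j}(S_{j-1})$, and since $S \subseteq S_{j-1}$ each summand is at most $\rho_{i_j}(S)$ by \eqref{eq:Nemhauser3}; summing gives $z(S\cup T) \leq z(S) + \sum_{i\in T\setminus S}\rho_i(S)$. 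Finally $T \subseteq S \cup T$ and monotonicity of $z$ give $z(T)\leq z(S\cup T)$, and \eqref{eq:Nemhauser4} follows.

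I do not expect a genuine obstacle here. The only point needing a little care is that the submodularity definition stated in the paper is phrased for $i \in N\setminus T$, so the case $i \in T$ in \eqref{eq:Nemhauser3} must be handled separately through monotonicity rather than submodularity. A secondary bookkeeping point, in the telescoping proof, is to keep the distinction between $z(T)$ and $z(S\cup T)$: the chain of marginal gains naturally reconstructs $z(S\cup T)$, and one needs the extra step $z(T) \leq z(S\cup T)$ (again monotonicity) to land on the claimed bound.
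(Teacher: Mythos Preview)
Your proof is correct. Note, however, that the paper does not supply its own proof of this proposition: it is stated with a citation to \citet{nemhauser1978analysis} and used as a black box, so there is nothing to compare against. Your argument is the standard one; the case split on whether $i\in T$ in \eqref{eq:Nemhauser3} is exactly the right way to bridge the paper's submodularity definition (stated only for $i\in N\setminus T$) and the unrestricted claim, and both derivations you give for \eqref{eq:Nemhauser4} are valid.
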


\subsection{Single Level Reformulation of IGs and Basic Submodular Interdiction Cuts}


Let $\Phi(x)$ be the value function of the follower problem of \eqref{eq:P1}, i.e., $\Phi(x)=\max \big\{z(S): S\subseteq {N\setminus N_x}, C(S)\leq Q  \big\}$. Our problem can be reformulated as 
\begin{align}
	 \min \,  &w& \label{eq:SingleL-1}\\
	& w \geq \Phi(x) \\
	& Ax\leq b \\
	& x\in \{0,1\}^{n}. \label{eq:SingleL-4}
\end{align}

Rewriting the value function for given $x$ as $\Phi(x)=\max \big\{z(S)-\sum_{i\in S} M_i x_i: S\in \mathcal{S} \big\}$, where $\mathcal{S}=\{S\subseteq {N}: C(S)\leq Q\}$ is the set of all feasible follower solutions, allows expressing the feasible region of the follower independent from the leader's decision by penalizing infeasible solutions where $ \exists i\in S, x_i=1$ with big-$M_i$. Then \eqref{eq:P1} can be restated as
\begin{align}
 \min\, & w\\
	& w \geq z(\hat{S})-\sum_{i\in \hat{S}} M_i x_i & \hat{S}\in \mathcal{S} \label{eq:InterdictionCut} \\
	& Ax\leq b \\
	& x\in \{0,1\}^{n}.
\end{align}

We note that above reformulation follows the same ideas as 
proposed for IGs with a linear follower objective function (see e.g.,\citep{israeli2002shortest, caprara2016bilevel, fischetti2019interdiction}). In the linear case the \emph{interdiction cuts} \eqref{eq:InterdictionCut} can be written as $w\geq d^T\hat{y} - \sum_{i\in N}M_i x_i \hat{y}_i$ where $y$ is the vector of binary follower variables, with $\hat y$ being a follower solution, and $d$ is the vector of follower objective coefficients. \cite{fischetti2019interdiction} prove the validity of the interdiction cut when $M_i=d_i$ under some assumptions. In the following, we present valid cuts for our problem in the form of \eqref{eq:InterdictionCut} using the submodularity of $z(S)$. As is the case with a linear objective function the values of the big-$M$ coefficients determine the strength of the formulation. 
We thus propose various liftings and variants of our cuts in Section \ref{section:improvements}. We solve the reformulation with a branch-and-cut scheme, where our various SICs are separated for integer and fractional leader $x^*$, implementation details are discussed in Section \ref{section:implementation}.


\begin{theorem}
Given a follower solution $\hat{S}\in \mathcal{S}$, the following \emph{basic SIC} is valid for \eqref{eq:SingleL-1}--\eqref{eq:SingleL-4}.
\begin{equation}
	w\geq z(\hat{S})-\sum_{i\in \hat{S}} \rho_{i}(\emptyset)x_i
	\label{eq:basicCut}
\end{equation}
\label{theo:basicCut}
\end{theorem}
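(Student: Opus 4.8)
The plan is to show that the basic SIC \eqref{eq:basicCut} is implied by the exact interdiction cut \eqref{eq:InterdictionCut}, i.e. it suffices to exhibit a valid choice of the big-$M_i$ coefficients, namely $M_i=\rho_i(\emptyset)=z(\{i\})-z(\emptyset)$, and then argue that for every leader decision $x$ feasible to \eqref{eq:SingleL-1}--\eqref{eq:SingleL-4}, the inequality $w\ge z(\hat S)-\sum_{i\in\hat S}\rho_i(\emptyset)x_i$ does not cut off the follower's true optimal response $\Phi(x)$. Concretely, I would fix $x\in X\cap\{0,1\}^n$ and set $T=\hat S\setminus N_x$, the part of the candidate follower set $\hat S$ that survives interdiction; since $\hat S\in\mathcal S$ and $\mathcal S$ is closed under taking subsets (the knapsack constraints $C(S)\le Q$ are monotone in $S$ because $c^\ell_i\ge 0$), we have $T\in\mathcal S$ and $T\subseteq N\setminus N_x$, so $T$ is feasible for the follower and hence $\Phi(x)\ge z(T)$.

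The core estimate is then to bound $z(\hat S)$ from above in terms of $z(T)$. Apply inequality \eqref{eq:Nemhauser4} of Proposition~\ref{prop:submodular2} with the set $T$ playing the role of ``$S$'' and $\hat S$ playing the role of ``$T$'': since $T\subseteq\hat S$, we get
\begin{equation*}
z(\hat S)\le z(T)+\sum_{i\in\hat S\setminus T}\rho_i(T).
\end{equation*}
By the monotonicity of marginal gains \eqref{eq:Nemhauser3}, $\rho_i(T)\le\rho_i(\emptyset)$ for every $i$, and $\hat S\setminus T=\hat S\cap N_x=\{i\in\hat S: x_i=1\}$, so
\begin{equation*}
z(\hat S)\le z(T)+\sum_{i\in\hat S\setminus T}\rho_i(\emptyset)=z(T)+\sum_{i\in\hat S}\rho_i(\emptyset)x_i\le\Phi(x)+\sum_{i\in\hat S}\rho_i(\emptyset)x_i,
\end{equation*}
where the last step uses $\Phi(x)\ge z(T)$ and the fact that $\rho_i(\emptyset)\ge 0$ (non-decreasingness) lets us harmlessly extend the sum from $\hat S\setminus T$ to all of $\hat S$ weighted by $x_i$. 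Rearranging gives $\Phi(x)\ge z(\hat S)-\sum_{i\in\hat S}\rho_i(\emptyset)x_i$, and since any feasible $(w,x)$ satisfies $w\ge\Phi(x)$, the cut \eqref{eq:basicCut} holds at every feasible point.

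The main obstacle is really just getting the bookkeeping of the index sets right — making sure that $\hat S\setminus T$ is exactly the interdicted part of $\hat S$ and that the surviving part $T$ is genuinely follower-feasible — together with invoking the right one of the several Nemhauser--Wolsey inequalities; the submodular estimate \eqref{eq:Nemhauser4} together with \eqref{eq:Nemhauser3} does all the analytic work, so no hard inequality needs to be proved from scratch. One should double-check the edge case $x=0$ (then $T=\hat S$ and the cut reads $w\ge z(\hat S)$, which is correct since $\hat S$ is then a feasible follower solution) and note explicitly that subset-closedness of $\mathcal S$ is what makes $T\in\mathcal S$, since this is the only structural property of the knapsack constraints that is used.
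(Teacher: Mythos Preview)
Your proof is correct and follows essentially the same approach as the paper: define the surviving subset $T=\hat S\setminus N_x$ (the paper calls it $S'$), use the non-negativity of the knapsack coefficients to argue $T$ is feasible, apply the Nemhauser--Wolsey inequality \eqref{eq:Nemhauser4} to bound $z(\hat S)\le z(T)+\sum_{i\in\hat S\setminus T}\rho_i(T)$, and then weaken $\rho_i(T)$ to $\rho_i(\emptyset)$ via \eqref{eq:Nemhauser3}. The only differences are cosmetic---your framing via the big-$M$ cut and your explicit discussion of edge cases---while the analytic content is identical.
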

\begin{proof}
For any feasible leader solution $x \in X$, define the follower solution $S^\prime=\hat{S} \setminus N_x$. Because $S^\prime \subseteq N \setminus N_x$, and $C(S^\prime)\leq C(\hat{S})$ due to non-negativity of $c_i^\ell$, $S^\prime$ is a feasible solution for $x$.
Due to $z(S)$ being submodular and non-decreasing, and using \eqref{eq:Nemhauser4}, we have 
\begin{equation}
	z(\hat{S})\leq z(S^\prime)+\sum_{i\in \hat{S}\setminus S^\prime }\rho_{i}(S^\prime)=z(S^\prime)+\sum_{i\in \hat{S}}\rho_{i}(S^\prime)x_i.
\end{equation} 
 Thus we have 
\begin{equation}
	w\geq \Phi(x) \geq z(S^\prime) \geq	z(\hat{S})-\sum_{i\in \hat{S}}\rho_{i}(S^\prime)x_i \geq z(\hat{S})-\sum_{i\in \hat{S}}\rho_{i}(\emptyset)x_i,
\end{equation}
which shows that the basic SIC for $\hat{S}$ is satisfied for any leader solution $x$. The last inequality follows from the fact that $\rho_{i}(S^\prime)\leq \rho_{i}(\emptyset)$, $i \in N$. 
\end{proof}

\subsection{Exemplary Submodular Interdiction Games}
\label{section:Applications}
The following two problems are interdiction variants of submodular optimization problems, which will be used in our computational study. The \emph{weighted maximal covering problem (MCP)} is a classical problem in location science (see, e.g., \cite{church1974maximal, LaporteEtal2015}), where the goal is to open $k$ facilities in order to maximize the number of customers covered by these open facilities. In the proposed interdiction variant, which we denote as \emph{weighted maximal coverage interdiction game (\WMCIG)} the leader can interdict the opening of some facilities. Similar to interdiction variants of other facility location problems (see e.g., Section \ref{section:Literature}) applications of the \WMCIG are in critical infrastructure protection and facility location under competition. A formal definition of the problem is given below.

\begin{definition}[Weighted maximal coverage interdiction game  (\WMCIG)]

	We are given a set of $m$ customers $J$ with profits $p_j$, $j\in J$, a set of 
	potential facility locations $N$ and for each facility $i \in N$ the set $J(i) \subseteq J$ of customers that a facility at location $i$ covers. Moreover, we are given two integers $k$ and $B$. The problem of the follower is finding a set of $B$ locations to open a facility to maximize the profit of covered customers, where the profit for a set $S\subseteq N$ of open facilities is defined as $z(S)=\sum_{j\in J(S)}p_j$ where $J(S)=\cup_{i\in S} J(i)$, i.e., the profit obtained from customers that are covered by at least one of the facilities in $S$.
	The goal of the leader is to interdict $k$ facility locations such that the profit of the follower is minimized.	
\end{definition}

We note that for the MCP a compact mixed-integer programming formulation is known, and thus for the \WMCIG a MIBLP formulation can be obtained and the problem can be solved using a MIBLP-solver. This formulation is discussed in Section \ref{section:Computational} where we also provide a computational comparison between our branch-and-cut and solving \WMCIG as a MIBLP with a state-of-the-art MIBLP-solver.

The second problem we consider is the interdiction variant of the bipartite inference problem (BIP). The BIP is studied in \citet{alon2012optimizing, sakaue2018accelerated}, and \citet{salvagnin2019some}, with an application to the allocation of marketing budget among media channels in the former. Its interdiction version could represent a competitive setting where an existing firm tries to undermine the marketing activities of a newcomer.
Contrary to the MCP, for the BIP only a submodular formulation is known.

\begin{definition}[Bipartite Inference Interdiction Game (\BIIG)]
Given a set of items $N$, a set of targets $M$, and a bipartite graph $G=(N,M,A)$, the objective of the follower in the \BIIG is to select a set of items $S\subseteq N$ that maximizes the total activation probabilities of all targets
\begin{equation}
z(S)=\sum_{j\in M} p_S(j), \notag
\end{equation}
where 
\begin{equation}
p_S(j)=1- \prod_{i\in S: (i,j)\in A} (1-p_i) \notag
\end{equation}
denotes the activation probability of target $j$ and $p_i$ is the activating probability of item $i \in N$, independent of the target \citep{sakaue2018accelerated, salvagnin2019some}. The follower has a budget of $B$ to select items. The objective of the leader is to minimize the total activation probability by interdicting a set of items in $N$ subject to a cardinality constraint, at most $k$ items can be interdicted.
\end{definition}

\section{Improvements, Liftings and Variants of the Basic Submodular Interdiction Cut\label{section:improvements}}

In this section, we first show how to obtain an improved version of \eqref{eq:basicCut} by exploiting the diminishing gains property of submodular functions which implies that as the set expands the marginal gains decrease.

\begin{theorem}
Given an arbitrary ordering $(i_1,i_2,...,i_T)$ of the items in follower solution $\hat{S}\in \mathcal{S}$, let $\hat{S}_{(1)}=\emptyset$ and $\hat{S}_{(t)}=\{i_1,...,i_{t-1}\}$ for $2\leq t\leq T$. The following \emph{improved SIC} is valid for \eqref{eq:SingleL-1}--\eqref{eq:SingleL-4} and it dominates \eqref{eq:basicCut}.

\begin{equation}
	w\geq z(\hat{S})-\sum_{t=1}^T \rho_{i_t}(\hat{S}_{(t)})x_{i_t}
	\label{eq:ImprovedCut}
\end{equation}
\label{theo:ImprovedCut}
\end{theorem}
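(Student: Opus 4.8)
The plan is to mimic the structure of the proof of Theorem \ref{theo:basicCut}, but instead of bounding every marginal gain $\rho_{i}(S')$ crudely by $\rho_{i}(\emptyset)$, I will bound it by $\rho_{i}(\hat S_{(t)})$ using a telescoping decomposition of $z(\hat S)$ along the fixed ordering $(i_1,\dots,i_T)$. First I would fix an arbitrary feasible leader solution $x\in X$ and, exactly as before, set $S' = \hat S\setminus N_x$; this is feasible for the follower because $S'\subseteq N\setminus N_x$ and $C(S')\le C(\hat S)\le Q$ by non-negativity of the $c_i^\ell$. So $w\ge\Phi(x)\ge z(S')$, and it remains to show $z(S')\ge z(\hat S)-\sum_{t=1}^T\rho_{i_t}(\hat S_{(t)})x_{i_t}$.

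The key step is the telescoping identity along the ordering: writing $\hat S_{(t)}=\{i_1,\dots,i_{t-1}\}$ (so $\hat S_{(T+1)}=\hat S$), we have
\begin{equation}
z(\hat S) = z(\emptyset) + \sum_{t=1}^T \bigl(z(\hat S_{(t+1)})-z(\hat S_{(t)})\bigr) = z(\emptyset) + \sum_{t=1}^T \rho_{i_t}(\hat S_{(t)}). \notag
\end{equation}
Subtracting from this the analogous telescoping sum over the sub-ordering of indices in $S'$ (the items of $\hat S$ not interdicted), and using that $\hat S_{(t)}\cap S'\subseteq\hat S_{(t)}$ together with the diminishing-returns bound \eqref{eq:Nemhauser3}, $\rho_{i_t}(\hat S_{(t)}\cap S')\ge\rho_{i_t}(\hat S_{(t)})$, one obtains
\begin{equation}
z(\hat S) - z(S') \le \sum_{t: i_t\in\hat S\setminus S'} \rho_{i_t}(\hat S_{(t)}) = \sum_{t=1}^T \rho_{i_t}(\hat S_{(t)})\, x_{i_t}, \notag
\end{equation}
where the last equality holds because $i_t\in\hat S\setminus S'$ exactly when $i_t$ is interdicted, i.e. $x_{i_t}=1$. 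Rearranging gives $z(S')\ge z(\hat S)-\sum_{t=1}^T\rho_{i_t}(\hat S_{(t)})x_{i_t}$, which combined with $w\ge z(S')$ proves validity.

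The cleanest way to make the middle inequality rigorous is probably an induction on $|\hat S\setminus S'|$, peeling off one interdicted item at a time and applying submodularity at each step, rather than fussing with two telescoping sums simultaneously; that is the step I expect to require the most care, since one must be sure the ordering-induced prefixes $\hat S_{(t)}$ interact correctly with the subset $S'$. Dominance over \eqref{eq:basicCut} is then immediate: since $\hat S_{(t)}\subseteq N$ for every $t$, monotonicity of marginal gains \eqref{eq:Nemhauser3} gives $\rho_{i_t}(\hat S_{(t)})\le\rho_{i_t}(\emptyset)$ for each $t$, so every coefficient on the right-hand side of \eqref{eq:ImprovedCut} is no larger than the corresponding coefficient in \eqref{eq:basicCut}, whence \eqref{eq:ImprovedCut} is the stronger (larger) lower bound on $w$.
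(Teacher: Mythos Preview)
Your proposal is correct and follows essentially the same approach as the paper. Both arguments set $S'=\hat S\setminus N_x$, telescope along the fixed ordering, and apply the diminishing-returns bound $\rho_{i_t}(\hat S_{(t)}\cap S')\ge\rho_{i_t}(\hat S_{(t)})$ (the paper writes this set as $S'_{(t)}=\hat S_{(t)}\setminus N_x$); the only cosmetic difference is that the paper packages the telescoping for $z(S')$ directly as $z(S')=\sum_{t=1}^T\rho_{i_t}(S'_{(t)})(1-x_{i_t})$ rather than subtracting two telescopes, so your suggested induction is unnecessary.
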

\begin{proof}

For any $x\in X$, define $S^\prime=\hat{S}\setminus N_x$ which is a feasible set for interdiction decision $x$ (see the proof of Theorem \eqref{theo:basicCut}). Let $S^\prime_{(t)}=\hat{S}_{(t)}\setminus N_x$ denote the items in $\hat{S}_{(t)}$ that are not interdicted in $x$, i.e., its maximal feasible subset. Notice that $S^\prime_{(t)} \subseteq S^\prime_{(t+1)}$. Using the same ordering of the items in $\hat{S}$, the objective value of $S^\prime$ can be computed incrementally by using the definition of a marginal gain. Starting with an empty set, increasing the objective value at each step by the marginal gain of the next item with respect to the current set yields the objective value of the final set, as used below.
\begin{equation}
\begin{split}
z(S^\prime) &= \rho_{i_1}(S^\prime_{(1)})(1-x_{i_1})+\rho_{i_2}(S^\prime_{(2)})(1-x_{i_2}) + \dots + \rho_{i_T}(S^\prime_{(T)})(1-x_{i_T})  \\
&= \sum_{t=1}^T \rho_{i_t}(S^\prime_{(t)})(1-x_{i_t}) \geq \sum_{t=1}^T \rho_{i_t}(\hat{S}_{(t)})(1-x_{i_t})
\end{split}
\label{eq:ImprovedCut3}
\end{equation}
Here, if an item $i_t$ is interdicted, its contribution is not included in the sum due to the $(1-x_{i_t})$ multiplier and $S^\prime_{(t)}=S^\prime_{(t+1)}$ by definition. 
The last inequality is due to $\rho(\cdot)$ being non-increasing and $S^\prime_{(t)}\subseteq \hat{S}_{(t)}$. Notice that $\sum_{t=1}^T \rho_{i_t}(\hat{S}_{(t)})=z(\hat{S})$, again by definition of marginal gains. Since $S^\prime$ is a feasible follower solution for $x\in X$, we have
\begin{equation}
	w\geq \Phi(x) \geq z(S^\prime) \geq \sum_{t=1}^T \rho_{i_t}(\hat{S}_{(t)})(1-x_{i_t})= z(\hat{S})-\sum_{t=1}^T \rho_{i_t}(\hat{S}_{(t)})x_{i_t}
\end{equation}
which shows that \eqref{eq:ImprovedCut} is valid for \eqref{eq:SingleL-1}--\eqref{eq:SingleL-4}. It clearly dominates \eqref{eq:basicCut} since $\rho_{i_t}(\hat{S}_{(t)})\leq \rho_{i_t}(\emptyset)$ for each $t\in \{1,...,T\}$.
\end{proof}


{
Next, we propose a method to lift the basic and improved SIC based on the pairwise relationships between some items of the ground set $N$. In a sense, we are informing the model about possible other follower solutions with a better objective value, which can be obtained through the exchange of some items in the current set $\hat{S}$ with \emph{superior} items outside of $\hat{S}$, if the leader does not interdict their usage. To be eligible for this type of exchange, an item pair $(i,j)$ should satisfy the condition that replacing $i$ with $j$ does not increase the marginal gains of the items in $\hat{S}$ with respect to the rest of the set, as well as certain subsets of $\hat{S}$, which implies the superiority of $j$ to $i$. 

We describe how to lift the improved cut \eqref{eq:ImprovedCut} in the following theorem, the proof for the basic cut \eqref{eq:basicCut} works similarly and is omitted for brevity. We also give examples on how the condition mentioned above can occur in the problems considered in the computational study. 

\begin{theorem}

Given a follower set $\hat{S} \in \mathcal{S}$ and an ordering $(i_1,i_2,...,i_T)$ of its elements, let $A=\{a_1,...,a_K\} \subseteq \hat{S}$ and $B=\{b_1,...,b_K\}\subseteq N\setminus\hat{S}$ such that 
\begin{enumerate}[(i)]
\item $c^\ell_{a_k}\geq c^\ell_{b_k}$ for $\ell=1,\ldots ,L$, and
\item $\rho_i(S\cup \{b_k\}\setminus \{a_k\})\leq \rho_i(S)$ for all $S$ such that $(\hat{S}\setminus A)\cup \{a_k\} \subseteq S \subseteq (\hat{S}\cup B) \setminus \{b_k\}$, and $i \in \hat{S}\setminus (S\cup \{b_k\})$,
\end{enumerate}
for each $k \in \{1,...,K\}$. Define the subsets $\hat{S}_{(t)}=\{i_1,...,i_{t-1}\}$ for $T \geq t \geq 2$ and $\hat{S}_{(1)}=\emptyset$. Also define $A_{(k)}=\{a_1,...,a_{k}\}$ and $B_{(k)}=\{b_1,...,b_{k}\}$ for $k\in\{1,...,K\}$, and $A_{(0)}=B_{(0)}=\emptyset$. The following lifted cut is valid for \eqref{eq:SingleL-1}--\eqref{eq:SingleL-4}.
\begin{equation}
w\geq z(\hat{S})-\sum_{t=1}^T \rho_{i_t}(\hat{S}_{(t)})x_{i_t}  + \sum_{k=1}^K \Big(  \rho_{b_k}\big(\hat{S}\cup B_{(k-1)}\big)-\rho_{a_k}\big(\hat{S}\cup \{b_k\}\setminus\{a_k\} \big) \Big)(1-x_{b_k})
	\label{eq:liftCut4}
\end{equation}
\label{theo:liftCut4}
\end{theorem}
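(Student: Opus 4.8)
The plan is the same as in Theorems~\ref{theo:basicCut}--\ref{theo:ImprovedCut}: for each leader solution $x\in X$ I would produce a follower solution $\tilde S$ that is feasible under $x$ and whose objective value is at least the right-hand side of \eqref{eq:liftCut4}; since $w\ge\Phi(x)\ge z(\tilde S)$ this proves validity. Let $I=\{k:\ x_{b_k}=0\}$ be the indices whose superior item $b_k$ is \emph{not} interdicted and put
\[
  \tilde S \ :=\ \bigl(\hat S\setminus N_x\bigr)\ \cup\ \{b_k:k\in I\}\ \setminus\ \{a_k:k\in I\}.
\]
Feasibility is immediate: $\tilde S\subseteq N\setminus N_x$ by construction, and since $\tilde S\subseteq(\hat S\setminus\{a_k:k\in I\})\cup\{b_k:k\in I\}$, condition (i) together with $c^\ell_i\ge 0$ gives $C(\tilde S)\le C(\hat S)\le Q$, so $\tilde S\in\mathcal S$ respects $x$.

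To bound $z(\tilde S)$ from below I would compute it by adding its elements one at a time and accumulating marginal gains, exactly as in the proof of Theorem~\ref{theo:ImprovedCut}. A convenient order is: (1) add \emph{all} of $\hat S$ along $(i_1,\dots,i_T)$, giving $z(\hat S)=\sum_t\rho_{i_t}(\hat S_{(t)})$; (2) for $k\in I$ in increasing order, add $b_k$ and then delete $a_k$; (3) delete the interdicted elements of $\hat S$ that are still present, namely $(\hat S\cap N_x)\setminus\{a_k:k\in I\}$. Because the set present just before $b_k$ is added in step~(2) is contained in $\hat S\cup B_{(k-1)}$ and $\rho$ is non-increasing, the contribution of $b_k$ is at least $\rho_{b_k}(\hat S\cup B_{(k-1)})$, so the positive swap terms of \eqref{eq:liftCut4} are reproduced. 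What remains is to control (a) the marginal gain lost when each $a_k$ is deleted in step~(2) and (b) the total cost of the deletions in step~(3), and to check that these are matched, respectively, by $\sum_{k\in I}\rho_{a_k}(\hat S\cup\{b_k\}\setminus\{a_k\})$ and by $\sum_t\rho_{i_t}(\hat S_{(t)})x_{i_t}$.

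Part (a) is the crux and the place where condition (ii) enters. When $a_k$ is deleted, the current set minus $a_k$ still contains $\hat S\setminus A$ (all of $\hat S$ was added and only $A$-elements have been removed so far), and it equals $\hat S\cup\{b_k\}\setminus\{a_k\}$ after the earlier swaps $a_{k'}\to b_{k'}$ ($k'\in I$, $k'<k$) have been performed. Undoing these swaps one at a time keeps the set above $\hat S\setminus A$ and disjoint from $a_k$, i.e.\ inside the range in which condition (ii) applies with $i:=a_k$; since (ii) says that such an undo cannot decrease $\rho_{a_k}$, chaining the inequalities gives $\rho_{a_k}(\text{deletion base})\le\rho_{a_k}(\hat S\cup\{b_k\}\setminus\{a_k\})$. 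For part (b) I would delete the interdicted elements in an order chosen so that, when $i_t$ is removed, the deletion base contains $\hat S_{(t)}$ up to substitutions $a_{k'}\to b_{k'}$, so that its $i_t$-marginal gain is at most $\rho_{i_t}(\hat S_{(t)})$ after absorbing those substitutions by (ii). I expect (b) -- arranging the deletion order so that every invocation of (ii) stays inside its range while the interdicted elements of $\hat S$ are still available -- to be the most delicate bookkeeping in the argument. Summing the termwise estimates and using $1-x_{b_k}=0$ for $k\notin I$ then yields \eqref{eq:liftCut4}; the lifting of the basic cut \eqref{eq:basicCut} follows by the same argument with every $\rho_{i_t}(\hat S_{(t)})$ replaced by $\rho_{i_t}(\emptyset)$.
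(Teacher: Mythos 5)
Your proposal follows essentially the same route as the paper's proof: the paper likewise sets $\bar K=\{k:x_{b_k}=0\}$, forms the swapped set $S''=(\hat S\setminus A_{\bar K})\cup B_{\bar K}$, lower-bounds $z(S'')$ by adding each $b_k$ (bounded below by $\rho_{b_k}(\hat S\cup B_{(k-1)})$ via monotonicity of $\rho$) and removing each $a_k$ (bounded above by $\rho_{a_k}(\hat S\cup\{b_k\}\setminus\{a_k\})$ via the same chain of condition-(ii) inequalities you describe), and then deletes the interdicted items using the telescoping identity from Theorem~\ref{theo:ImprovedCut}. The bookkeeping you flag as delicate in part (b) is handled in the paper exactly as you propose: the deletion order is the original ordering $(i_1,\dots,i_T)$ with each $a_k$ replaced by $b_k$, so each deletion base is the substituted prefix $S''_{(t)}$ and condition (ii) supplies the comparison $\rho_{i_t}(S''_{(t)})\le\rho_{i_t}(\hat S_{(t)})$.
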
 
\begin{proof}
Consider any feasible leader solution $x$. If $x_{b_k}=1$ for each $k\in\{1,...,K\}$, the lifted cut is valid since $x$ satisfies the improved cut \eqref{eq:ImprovedCut}. Otherwise, let $\bar{K}=\{k\in \{1,...,K\}:x_{b_k}=0\}$, $A_{\bar{K}}=\{a_k:k\in\bar{K}\}$ and $B_{\bar{K}}=\{b_k:k\in\bar{K}\}$. Define the set $S^\prime=((\hat{S}\setminus A_{\bar{K}})\cup B_{\bar{K}}) \setminus N_x$ which is feasible for $x$ due to condition $(i)$. Since its feasibility implies that $w\geq \Phi(x) \geq z(S^\prime)$, showing that $z(S^\prime)$ is greater than or equal to the RHS of \eqref{eq:liftCut2} would prove the validity of the cut. To this end, we define an intermediate set $S^{\prime\prime}=(\hat{S}\setminus A_{\bar{K}})\cup B_{\bar{K}})$ and compute the following bound on $z(S^{\prime\prime})$ as if at each step we add one $b_k$, $k\in \bar{K}$, to $\hat{S}$ and then remove $a_k$ from the set. 
\begin{equation}
\begin{split}
z(S^{\prime\prime}) &=  z(\hat{S}) + \sum_{k\in \bar{K}}\rho_{b_k}\big(\hat{S}\cup (B_{(k-1)} \cap B_{\bar{K}}) \setminus (A_{(k-1)}\cap A_{\bar{K}}) \big) \\
&  \phantom{aa} - \sum_{k\in \bar{K}}\rho_{a_k}\big(\hat{S}\cup (B_{(k)}\cap B_{\bar{K}}) \setminus (A_{(k)}\cap A_{\bar{K}}) \big) \\
&\geq z(\hat{S}) + \sum_{k\in \bar{K}}\rho_{b_k}\big(\hat{S}\cup B_{(k-1)} \big) - \sum_{k\in \bar{K}}\rho_{a_k}\big(\hat{S}\cup (B_{(k)}\cap B_{\bar{K}}) \setminus (A_{(k)}\cap A_{\bar{K}}) \big)
\end{split}
\label{eq:liftCut_Sprimeprime}
\end{equation}
It is possible to further simplify the RHS of the inequality above as follows by using the condition $(ii)$.
\begin{equation}
\begin{split}
\rho_{a_k} \big(\hat{S}\cup (B_{(k)}\cap B_{\bar{K}})   \setminus  &  (A_{(k)}\cap A_{\bar{K}}) \big) \\
\leq & \, \rho_{a_k}\big(\hat{S}\cup (\{b_2,\ldots,b_{k}\}\cap B_{\bar{K}}) \setminus (\{a_2,\ldots,a_k\}\cap A_{\bar{K}}) \big) \\
\vdots & \\
\leq & \,\, \rho_{a_k}\big(\hat{S}\cup (\{b_{k}\}\cap B_{\bar{K}}) \setminus (\{a_k\}\cap A_{\bar{K}}) \big) \\
\leq & \, \rho_{a_k}\big(\hat{S}\cup\{b_k\}\setminus \{a_k\}\big).
\end{split}
\label{eq:liftCut_exchange}
\end{equation}
The last inequality is due to $k\in \bar{K}$, i.e., $a_k \in A_{\bar{K}}$ and $b_k \in B_{\bar{K}}$. We rewrite the inequality in \eqref{eq:liftCut_Sprimeprime} as 
\begin{equation}
\begin{split}
z(S^{\prime\prime}) &\geq z(\hat{S}) + \sum_{k\in \bar{K}}\rho_{b_k}\big(\hat{S}\cup B_{(k-1)} \big) - \sum_{k\in \bar{K}} \rho_{a_k}\big(\hat{S}\cup\{b_k\}\setminus \{a_k\}\big) \\
&= z(\hat{S}) + \sum_{k=1}^K \Big(\rho_{b_k}\big(\hat{S}\cup B_{(k-1)} \big) -\rho_{a_k}\big(\hat{S}\cup\{b_k\}\setminus \{a_k\}\big) \Big)(1-x_k).
\end{split}
\label{eq:liftCut_Sprimeprime2}
\end{equation}
The equality follows from that $x_k=1$ for $k\notin \bar{K}$. In the next step, we evaluate the objective value of $S^\prime= S^{\prime\prime}\setminus N_x$ using the identity obtained in \eqref{eq:ImprovedCut3}. Let $(j_1,...,j_T)$ be an ordering of the items in $S^{\prime\prime}$, that is identical to $(i_1,...,i_T)$ except that each $a_k$, $k\in \bar{K}$, is replaced with $b_k$, i.e., $j_t=i_t$ for $j_t\in S^{\prime\prime}\setminus B_{\bar{K}}$ and $j_t=b_k \iff i_t=a_k$ for $k\in \bar{K}$. Define the associated subsets $S^{\prime\prime}_{(t)}=\{j_1,\ldots,j_{t-1}\}$. Then, due to \eqref{eq:ImprovedCut3} we have
\begin{equation}
z(S^\prime)\geq z(S^{\prime\prime})-\sum_{t=1}^T \rho_{j_t}(S^{\prime\prime}_{(t)}) x_{j_t} = z(S^{\prime\prime})-\sum_{t=1}^T \rho_{i_t}(S^{\prime\prime}_{(t)}) x_{i_t}.
\label{eq:liftCut_Sprime}
\end{equation}
The reason of the equality is that $x_{j_t}=0$ for $j_t \in B_{\bar{K}}$ and $j_t=i_t$ for  $j_t \in S^{\prime\prime}\setminus B_{\bar{K}}$. Since $S^{\prime\prime}_{(t)}$ is obtained through the exchange of some $a_k$ with $b_k$, condition $(ii)$ implies that $\rho_{i_t}(S^{\prime\prime}_{(t)}) \leq \rho_{i_t}({\hat{S}}_{(t)})$. 
Along with \eqref{eq:liftCut_Sprimeprime2} and \eqref{eq:liftCut_Sprime}, this inequality leads to
\begin{equation}
\begin{split}
z(S^\prime) &\geq z(S^{\prime\prime})-\sum_{t=1}^T \rho_{i_t}({\hat{S}}_{(t)}) x_{i_t} \\
&\geq   
z(\hat{S}) + \sum_{k=1}^K \Big(\rho_{b_k}\big(\hat{S}\cup B_{(k-1)} \big) -\rho_{a_k}\big(\hat{S}\cup\{b_k\}\setminus \{a_k\}\big) \Big)(1-x_k) -\sum_{t=1}^T \rho_{i_t}({\hat{S}}_{(t)}) x_{i_t} 
\end{split}
\end{equation}
which completes the proof.
\end{proof}

\begin{remark}
For a pair $(a_k,b_k)$ satisfying condition $(ii)$, it is possible that the coefficient $\rho_{b_k}\big(\hat{S}\cup B_{(k-1)} \big)-\rho_{a_k}\big(\hat{S}\cup \{b_k\}\setminus\{a_k\} \big)$ of the last term in \eqref{eq:liftCut4} is negative. 
Even if the items are selected in such a way that $b_k$ can replace $a_k$ without any sacrifice in solution quality, the coefficient can still be negative since its components constitute bounds on the true change in the objective due to adding $b_k$ and removing $a_k$, respectively. 
Obviously, \eqref{eq:liftCut4} dominates the improved cut \eqref{eq:ImprovedCut} only if the coefficients of the $(1-x_{b_k})$ terms are non-negative. Since one can choose the sets $A$ and $B$ accordingly, we assume that those coefficients are positive and call \eqref{eq:liftCut4} a \textit{lifted cut}.
	\label{rem:lifting}
\end{remark}

\begin{remark}

Condition $(ii)$ in Theorem \ref{theo:liftCut4} is a general description for the implication of superiority between item pairs. 
Although this type of relationship might seem difficult to detect, it becomes more intuitive on an application basis. 
For \BIIG, $(ii)$ corresponds to the situation that the neighbors (connected targets) of $a_k$ is a subset of the neighbors of $b_k$, and $p_{b_k}\geq p_{a_k}$. Thus, if $a_k$ is replaced by $b_k$, the objective is at least as large as before and the marginal gains of $i\in \hat{S}$ with respect to rest of the set is not larger than before.
In some problems such as \WMCIG superiority has stronger implications. There, for a facility pair $(a_k,b_k)$ with $J(a_k)\subseteq J(b_k)$ condition $(ii)$ is equivalent to $\rho_{a_k}(\{b_k\})=0$. In other words including $b_k$, which covers all of the customers that $a_k$ covers, in the set renders $a_k$ completely useless and removing $a_k$ does not damage the objective anymore. 
Note that, condition $(ii)$ holds for any pair with $\rho_{a_k}(\{b_k\})=0$. If $\rho_{a_k}(\{b_k\})=0$ for each $k\in\{1,...,K\}$, then \eqref{eq:liftCut4} is reduced to 
	\begin{equation}
	w\geq z(\hat{S})-\sum_{t=1}^T \rho_{i_t}(\hat{S}_{(t)})x_{i_t} + \sum_{k=1}^K \rho_{b_k}(\hat{S}\cup B_{(k-1)} )(1-x_{b_k}).
	\label{eq:liftCut2}
	\end{equation}
This cut dominates \eqref{eq:ImprovedCut} independent of the choices of $A$ and $B$ since the last term is always non-negative.
	\label{rem:liftSpecial}
\end{remark}

\begin{example}

Consider an instance of the \BIIG with 3 items $N=\{1,2,3\}$, 4 targets $M=\{a,b,c,d\}$ and the arc list $A=\{(1,a),(2,a),(2,b),(3,a),(3,c)\}$. Let the activation probabilities of the items be given as $p_1=0.3$, $p_2=0.5$ and $p_3=0.4$. Recall that the objective function of the follower is $z(S)=\sum_{j\in M} \big( 1- \prod_{i\in S: (i,j)\in A} (1-p_i) \big)$, therefore $z(\emptyset)=0$ and the gains with respect to the empty set are $\rho_1(\emptyset)=0.3+0+0=0.3$, $\rho_2(\emptyset)=0.5 + 0.5+0=1$ and $\rho_3(\emptyset)=0.4 +0+ 0.4=0.8$ Consider the set $\hat{S}=\{1,2\}$ with objective value $z(\hat{S})=1-(1-0.3)(1-0.5)+ 0.5 +0= 1.15$. The associated basic SIC is $w \geq 1.15 - 0.3x_1 - x_2$. If we use the ordering $i_1=1,i_2=2$, the improved cut becomes $w\geq 1.15 - 0.3x_1 -0.85x_2$. 

Now consider the items $1 \in \hat{S}$ and $3 \notin \hat{S}$. Notice that they do not satisfy the special condition in Remark \ref{rem:liftSpecial}, i.e, $\rho_1 (\{3\})= z(\{1,3\})-z(\{3\}) \neq 0$. For the condition $(ii)$ in Theorem \ref{theo:liftCut4}, it is sufficient to check if $\rho_2(\{3\})\leq \rho_2(\{1\})$ since only $i=2$ and $S=\{1\}$ fit the definition given. We have that $\rho_2(\{3\})=z(\{2,3\})-z(\{3\})=1-(1-0.5)(1-0.4)+0.5 +0.4 - 0.8=1.6-0.8=0.8 $ and $\rho_2(\{1\})=z(\{1,2\})-z(\{1\})=1.15-0.3=0.85$, thus the condition is satisfied and a better solution can be found by replacing 1 with 3. For $A=\{1\}$ and $B=\{3\}$ the coefficient of the lifting term is $\rho_3(\{1,2\})-\rho_1(\{2,3\})=z(\{2,3\})-z(\{1,2\})=1.6-1.15=0.45$. The improved cut is lifted to $w \geq 1.15 -0.3x_1-0.85x_2 + 0.45 (1-x_3) $ according to Theorem \ref{theo:liftCut4}.

\end{example}

\begin{example}
Consider an instance of \WMCIG with 4 customers $J=\{a,b,c,d\}$ and 3 potential facility locations $N=\{1,2,3\}$ and the maximum number of facilities to open $B=2$. Let the customers covered by each location are given as $J(1)=\{a,c\}$, $J(2)=\{a,b\}$, $J(3)=\{a,c,d\}$, and the profits of the customers are $p_a=5$, $p_b=9$, $p_c=6$ and $p_d=4$. Now consider the set $\hat{S}=\{1,2\}$ with a total profit $z(\hat{S})=5+9+6=20$. The basic SCI for $\hat{S}$ is unique and $w\geq 20 - 11x_1 - 14x_2$. Since the size of $\hat{S}$ is two, there are two ways to generate the improved cut: $w \geq 20 - 11x_1 - 9x_2$ and $w \geq 20 -6x_1-14x_2$. The facility locations $1 \in \hat{S}$ and $3 \notin \hat{S}$ satisfy the condition in Remark \ref{rem:liftSpecial} as $\rho_1(\{3\})= z(\{1,3\})-z(\{3\})=0$. If we define the sets $A=\{1\}$ and $B=\{3\}$, using $\rho_3(\{1,2\})=4$ the first improved cut can be lifted to $w \geq 20 - 11x_1 - 9x_2 + 4(1-x_3)$.
Similarly, the second one is lifted to $w \geq 20 -6x_1-14x_2+ 4(1-x_3)$. If $x_3=1$, the lifted cuts are identical to their non-lifted versions, otherwise they yield a better cut.
\label{example:WMCIG}
\end{example}


\begin{remark}
If the term $\rho_{i_t}(\hat{S}_{(t)})$ in the lifted cut is replaced by $\rho_{i_t}(\emptyset)$, the resulting cut is the lifted version of the basic SIC \eqref{eq:basicCut} and it is obviously valid and dominated by the current one since $\rho_{i_t}(\hat{S}_{(t)}) \leq \rho_{i_t}(\emptyset)$. 
\end{remark}

Next, we propose a method to obtain a new cut that could feed the model with an alternative follower solution set, in case some of the items in the current set are interdicted. Unlike the lifted cut, this cut is not based on superiority implying relationships between item pairs, but on the availability of the items.

}
\begin{theorem}

Given a follower set $\hat{S} \in \mathcal{S}$ and an ordering $(i_1,i_2,...,i_T)$ of its elements, let $A=\{a_1,...,a_K\} \subseteq \hat{S}$ and $B=\{b_1,...,b_K\}\subseteq N\setminus\hat{S}$ such that $c^\ell_{a_k}\geq c^\ell_{b_k}$ for $\ell=1,\ldots ,L$ and for each $k\in \{1,...,K\}$. Define the subsets $\hat{S}_{(t)}=\{i_1,...,i_{t-1}\}$ for $T \geq t \geq 2$ and $\hat{S}_{(1)}=\emptyset$. Also define $B_{(k)}=\{b_1,...,b_{k}\}$ for $k\in\{1,...,K\}$ and $B_{(0)}=\emptyset$. The following alternative cut is valid for \eqref{eq:SingleL-1}--\eqref{eq:SingleL-4}. 

\begin{equation}
	w\geq z(\hat{S})-\sum_{t=1}^T \rho_{i_t}(\hat{S}_{(t)})x_{i_t} + \sum_{k=1}^K \rho_{b_k}(\hat{S}\cup B_{(k-1)}\setminus \{a_k\})(x_{a_k}-x_{b_k})
	\label{eq:modCut2}
\end{equation}
\label{theo:modCut2}
\end{theorem}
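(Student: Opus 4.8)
The plan is to follow the template of Theorems~\ref{theo:basicCut} and~\ref{theo:ImprovedCut}: fix an arbitrary feasible leader solution $x\in X$, exhibit a follower solution $S'$ that is feasible for $x$, and show that $z(S')$ is at least the right‑hand side of \eqref{eq:modCut2}; since $w\geq\Phi(x)\geq z(S')$ this proves validity. The new ingredient, compared with the improved cut, is a ``repair'' step: whenever the leader interdicts some $a_k$ but leaves the cheaper partner $b_k$ available, the follower may swap $b_k$ in for $a_k$. Accordingly I would let $\bar K=\{k\in\{1,\dots,K\}:\ x_{a_k}=1,\ x_{b_k}=0\}$ and define $S'=(\hat S\setminus N_x)\cup\{b_k:k\in\bar K\}$. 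Feasibility of $S'$ under $x$ is immediate for the availability requirement (each inserted $b_k$ has $x_{b_k}=0$); for the knapsack constraints, note that for $k\in\bar K$ we have $a_k\in N_x$, so---the $a_k$ being distinct---the set $\hat S\setminus N_x$ omits all of $\{a_k:k\in\bar K\}$, and replacing them by the $b_k$ with $c^\ell_{b_k}\le c^\ell_{a_k}$ keeps $C(S')\le C(\hat S)\le Q$ componentwise.

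Next I would decompose $z(S')$ into two contributions. Write $\tilde S=\hat S\setminus N_x$. The chain of inequalities \eqref{eq:ImprovedCut3} in the proof of Theorem~\ref{theo:ImprovedCut}, applied to $\tilde S$, already yields $z(\tilde S)\ge z(\hat S)-\sum_{t=1}^T\rho_{i_t}(\hat S_{(t)})x_{i_t}$, which covers the first two terms of \eqref{eq:modCut2}. For the remaining term, add the (distinct) elements $b_k$, $k\in\bar K$, to $\tilde S$ one at a time in increasing index order and telescope, so that $z(S')=z(\tilde S)+\sum_{k\in\bar K}\rho_{b_k}\big(\tilde S\cup(B_{(k-1)}\cap B_{\bar K})\big)$. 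The crucial monotonicity observation is that for $k\in\bar K$ the set $\tilde S\cup(B_{(k-1)}\cap B_{\bar K})$ is contained in $(\hat S\cup B_{(k-1)})\setminus\{a_k\}$: indeed $\tilde S\subseteq\hat S\setminus\{a_k\}$ because $a_k$ is interdicted, $B_{(k-1)}\cap B_{\bar K}\subseteq B_{(k-1)}$, and $a_k\notin B_{(k-1)}$. Since $\rho$ is non‑increasing (Proposition~\ref{prop:submodular2}), each such gain is at least $\rho_{b_k}\big((\hat S\cup B_{(k-1)})\setminus\{a_k\}\big)$, precisely the coefficient appearing in \eqref{eq:modCut2}.

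It then remains to replace the sum over $\bar K$ by the full weighted sum $\sum_{k=1}^K\rho_{b_k}\big((\hat S\cup B_{(k-1)})\setminus\{a_k\}\big)(x_{a_k}-x_{b_k})$. For $k\in\bar K$ the weight $x_{a_k}-x_{b_k}$ equals $1$; for any other $k$ one has either $x_{a_k}=0$, giving weight $-x_{b_k}\le0$, or $x_{a_k}=x_{b_k}=1$, giving weight $0$. Since all marginal gains are non‑negative, discarding these extra terms only lowers the right‑hand side, and chaining the three estimates gives $w\ge z(S')\ge$ RHS of \eqref{eq:modCut2}. I expect the delicate points to be (i) pinning down the subset relation that forces the ``$\setminus\{a_k\}$'' inside the marginal‑gain coefficient---this is exactly what separates \eqref{eq:modCut2} from the lifted cut of Theorem~\ref{theo:liftCut4}, whose $b_k$‑coefficients instead involve $\rho_{b_k}(\hat S\cup B_{(k-1)})$---and (ii) the simultaneous‑swap feasibility check, i.e., ensuring the removed $a_k$ and the inserted $b_k$ are all distinct so that the knapsack bookkeeping is a genuine term‑by‑term comparison.
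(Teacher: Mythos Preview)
Your proposal is correct and essentially identical to the paper's proof: both define the same index set $\bar K=\{k:x_{a_k}=1,\,x_{b_k}=0\}$, build the same feasible follower solution $S'=(\hat S\setminus N_x)\cup\{b_k:k\in\bar K\}$, bound $z(\hat S\setminus N_x)$ via \eqref{eq:ImprovedCut3}, telescope the added $b_k$'s, and then use the subset relation $\tilde S\cup(B_{(k-1)}\cap B_{\bar K})\subseteq(\hat S\cup B_{(k-1)})\setminus\{a_k\}$ together with non-increasingness of $\rho$ and the sign of $x_{a_k}-x_{b_k}$ to finish. The only cosmetic difference is that the paper treats the case $\bar K=\emptyset$ separately at the outset, whereas you absorb it into the general argument; your presentation is arguably cleaner in that respect.
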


\begin{proof}

Let $x$ be a feasible leader solution. If $x_{a_k}-x_{b_k}\leq 0$ for all $k\in \{1,...,K\}$, then $x$ satisfies \eqref{eq:modCut2} since it already satisfies \eqref{eq:ImprovedCut} since $\rho(\cdot)$ is non-negative by \eqref{eq:Nemhauser3}. Otherwise, let $\bar{K}=\{k\in \{1,...,K\}:x_{a_k}-x_{b_k}>0\}$ denote the index set of the $a_k,b_k$ pairs such that $x_{a_k}=1$ and $x_{b_k}=0$.
Define $A_{\bar{K}}=\{a_k:k\in\bar{K}\}$ and $B_{\bar{K}}=\{b_k:k\in\bar{K}\}$. 
Consider the set $S^\prime=((\hat{S}\setminus A_{\bar{K}})\cup B_{\bar{K}}) \setminus N_x$ which is feasible for $x$ under the assumption that $c^\ell_{a_k}\geq c^\ell_{b_k}$ for $\ell=1,\ldots ,L$ and for each $k\in \bar{K}$. Due to the definition of $A_{\bar{K}}$ and $B_{\bar{K}}$, we have that $A_{\bar{K}}\subseteq N_x$ and $B_{\bar{K}}\cap N_x=\emptyset$. Thus, $S^\prime=(\hat{S}\setminus N_x) \cup B_{\bar{K}}$ and a lower bound on its objective value can be obtained by estimating the incremental change in the objective value due to adding each $b_k\in B_{\bar{K}}$ to $\hat{S}\setminus N_x$ as follows.
\begin{equation}
\begin{split}
z(S^\prime) & = z(\hat{S}\setminus N_x) + \sum_{k\in\bar{K}}\rho_{b_k} (\hat{S}\setminus N_x \cup (B_{(k-1)}\cap B_{\bar{K}}))\\
& \geq z(\hat{S}\setminus N_x) +  \sum_{k\in\bar{K}} \rho_{b_k} (\hat{S}\setminus a_k \cup B_{(k-1)})\\
& \geq z(\hat{S}\setminus N_x) +  \sum_{k=1}^K \rho_{b_k} (\hat{S}\setminus a_k \cup B_{(k-1)}) (x_{a_k}-x_{b_k}).
\end{split}
\label{eq:modCut2_Sprime}
\end{equation}
The reason of the first inequality is that $a_k \in N_x$ for $k\in \bar{K}$ and $\rho(\cdot)$ is non-increasing. The second inequality follows from that $x_{a_k}-x_{b_k}=1$ for $k\in \bar{K}$ and $x_{a_k}-x_{b_k}\leq 0$ for $k\notin \bar{K}$. Due to \eqref{eq:ImprovedCut3} in the proof of Theorem \ref{theo:ImprovedCut} we have that $z(\hat{S}\setminus N_x) \geq z(\hat{S})-\sum_{t=1}^T \rho_{i_t}(\hat{S}_{(t)})x_{i_t}$. Thus, $z(\hat{S}\setminus N_x)$ in \eqref{eq:modCut2_Sprime} can be replaced by its lower bound yielding
\begin{equation}
w\geq \Phi(x) \geq z(S^\prime) \geq z(\hat{S})-\sum_{t=1}^T \rho_{i_t}(\hat{S}_{(t)})x_{i_t} + \sum_{k=1}^K \rho_{b_k} (\hat{S}\setminus a_k \cup B_{(k-1)}) (x_{a_k}-x_{b_k})
\end{equation}
and hereby completing the proof.

\end{proof}

Theorem \ref{theo:modCut2} can be interpreted as follows. If an item in $\hat{S}$ is interdicted and therefore removed from the solution, one could obtain a better solution by including a non-interdicted item whose costs are at most as large as of the former item. However, the new cut can be worse than the original one as $x_{a_k}-x_{b_k}$ can take a negative value. Thus, \eqref{eq:modCut2} is not necessarily a lifted cut, but an alternative to the original basic/improved SIC. As is the case with the lifted cuts, an alternative cut can be obtained from a basic cut \eqref{eq:basicCut} instead of an improved one, by simply replacing $\rho_{i_t}(\hat{S}_{(t)})$ in \eqref{eq:modCut2} by $\rho_{i_t}(\emptyset)$. However, it would be a weaker cut than \eqref{eq:modCut2}.

\begin{example}

Consider the \WMCIG instance in Example \ref{example:WMCIG}. Suppose that we are given the same set $\hat{S}=\{1,2\}$ and asked to obtain the alternative cut for the ordering $i_1=1, i_2=2$ and $A=\{1\}$, $B=\{3\}$. The coefficient of the additional term would be $\rho_3(\hat{S}\setminus \{1\})=\rho_3(\{2\})=24-14=10$. Using the improved cut from Example \ref{example:WMCIG}, we obtain the alternative cut
\begin{equation}
w\geq 20-11x_1-9x_2 + 10 (x_1-x_3). \notag
\end{equation}
Now consider two interdiction strategies $x^{(1)}=(0,0,1)$ and $x^{(2)}=(1,0,0)$. While $x^{(1)}$ yields an alternative cut worse than the improved one ($w\geq 10$ instead of $w\geq 20$), the alternative cut is better for $x^{(2)}$ ($w\geq 19$ instead of $w\geq 9$).
\end{example}

\section{Implementation Detail \label{section:implementation}}
In this section we propose a branch-and-cut (B\&C) scheme to solve Problem \eqref{eq:P1}. We explain the details of the separation of SICs after we provide some observations which can be exploited for a more efficient implementation.

\subsection{Dominance Inequalities}

The following dominance inequalities can be added to remove some feasible solutions, but it is guaranteed that not all optimal solutions will be cut off.  
	Detection of the item-pairs fitting Theorem \ref{theo:dominanceValidity} depends on the problem structure. Thus, after the main theorem, we give propositions on how to detect them in our considered applications.

\begin{theorem}
If a pair of items $i,j \in N$ satisfies $c^\ell_i \leq c^\ell_j$ for $\ell=1,\ldots, L$, $\rho_{i}(S)\geq \rho_{j}(S)$ for all $S\subseteq N\setminus \{i,j\}$ and $A_i \leq A_j$, then the inequality $x_i \geq x_j $ does not cut off all optimal solutions to \eqref{eq:SingleL-1}--\eqref{eq:SingleL-4} 
if $x_j \geq x_i$ is not already present in the model. 
\label{theo:dominanceValidity} 
\end{theorem}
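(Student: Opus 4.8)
The plan is to show that if $x^*$ is an optimal solution to \eqref{eq:SingleL-1}--\eqref{eq:SingleL-4} with $x^*_i < x^*_j$ (the only way $x_i \geq x_j$ is violated, given binary variables, is $x^*_i = 0$, $x^*_j = 1$), then we can construct another optimal solution $\bar{x}$ that does satisfy $x_i \geq x_j$. The natural candidate is the swap $\bar{x}$ obtained from $x^*$ by setting $\bar{x}_i = 1$, $\bar{x}_j = 0$, and leaving all other components unchanged. I would then verify two things: (a) $\bar{x}$ is leader-feasible, i.e.\ $A\bar{x} \leq b$, and (b) the follower's optimal value does not increase, i.e.\ $\Phi(\bar{x}) \leq \Phi(x^*)$, which together with $w^* \geq \Phi(x^*)$ shows $(w^*, \bar{x})$ is feasible with the same (hence optimal) objective value. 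The caveat "if $x_j \geq x_i$ is not already present in the model" is there precisely because if both $x_i \geq x_j$ and $x_j \geq x_i$ were imposed we would force $x_i = x_j$ and might lose all optima; with only one of the two inequalities present, the swap argument keeps us inside the feasible region.

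For step (a): since $A_i \leq A_j$ (componentwise), replacing a unit in column $j$ by a unit in column $i$ can only decrease the left-hand side $Ax$, so $A\bar{x} = Ax^* - A_j + A_i \leq Ax^* \leq b$. Thus $\bar{x} \in X$. For step (b): write $N_{x^*}$ and $N_{\bar{x}}$ for the interdicted sets; they satisfy $N_{\bar{x}} = (N_{x^*} \setminus \{j\}) \cup \{i\}$. Let $\bar{S}$ be an optimal follower solution for $\bar{x}$, so $\bar{S} \subseteq N \setminus N_{\bar{x}}$, $C(\bar{S}) \leq Q$, and $z(\bar{S}) = \Phi(\bar{x})$. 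I want to exhibit a follower solution feasible for $x^*$ whose $z$-value is at least $z(\bar{S})$. Note $i \notin \bar{S}$ (since $i \in N_{\bar{x}}$), but possibly $j \in \bar{S}$. Define $S' = \bar{S}$ if $j \notin \bar{S}$, and otherwise $S' = (\bar{S} \setminus \{j\}) \cup \{i\}$ — i.e.\ swap $j$ out for $i$ in the follower solution as well. Then $S' \subseteq N \setminus N_{x^*}$ (we removed the only element that could lie in $N_{x^*}$, namely $j$, and added $i$, which is not interdicted under $x^*$ since $x^*_i = 0$), and $C(S') \leq C(\bar{S}) \leq Q$ because $c^\ell_i \leq c^\ell_j$ for all $\ell$. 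Hence $S'$ is feasible for $x^*$. It remains to show $z(S') \geq z(\bar{S})$; this is where $\rho_i(S) \geq \rho_j(S)$ for all $S \subseteq N \setminus \{i,j\}$ enters: in the nontrivial case write $T = \bar{S} \setminus \{j\} \subseteq N \setminus \{i,j\}$, so $z(S') - z(\bar{S}) = z(T \cup \{i\}) - z(T \cup \{j\}) = \rho_i(T) - \rho_j(T) \geq 0$. Therefore $\Phi(x^*) \geq z(S') \geq z(\bar{S}) = \Phi(\bar{x})$, so $w^* \geq \Phi(x^*) \geq \Phi(\bar{x})$, and $(w^*, \bar{x})$ is feasible for \eqref{eq:SingleL-1}--\eqref{eq:SingleL-4} with objective $w^*$; thus $\bar{x}$ is optimal and satisfies $x_i \geq x_j$.

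The only real subtlety — and the step I would be most careful with — is the interaction with the hypothesis "$x_j \geq x_i$ is not already present." If that reverse inequality were in the model, then $\bar{x}$ (with $\bar{x}_j = 0 < 1 = \bar{x}_i$) would violate it, and the argument collapses; one must state that under the theorem's hypothesis $\bar{x}$ remains feasible with respect to all constraints currently in the model, so the only constraint the swap could conceivably break is the one we are adding, which by construction $\bar{x}$ satisfies. A second minor point to handle cleanly is the degenerate possibility $j \in N_{x^*}$ is vacuous here (since $x^*_j = 1$ means $j \in N_{x^*}$, which is exactly why the follower could not use $j$ under $x^*$ and why swapping it for the now-available $i$ is the right move) — I would make sure the set-inclusion bookkeeping $S' \subseteq N \setminus N_{x^*}$ is spelled out explicitly rather than left to the reader. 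Everything else is routine monotonicity and the single submodular/marginal-gain comparison.
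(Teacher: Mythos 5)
Your proposal is correct and follows essentially the same route as the paper's proof: swap $i$ and $j$ in the leader solution (feasible by $A_i \leq A_j$), take an optimal follower response to the swapped solution, exchange $j$ for $i$ in that response when necessary (feasible by $c^\ell_i \leq c^\ell_j$), and invoke $\rho_i(S) \geq \rho_j(S)$ to conclude the follower's value does not increase, so the swapped leader solution is also optimal. Your treatment of the set-inclusion bookkeeping and of the ``$x_j \geq x_i$ not already present'' caveat is if anything slightly more explicit than the paper's.
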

\begin{proof}
Suppose that all optimal solutions are eliminated by dominance inequalities and $x^*$ is one of them. For the sake of simplicity, assume that the model includes exactly one such inequality, which is $x_i \geq x_j$. Since $x^*$ is cut by the dominance inequality, we should have $x^*_i<x^*_j$, i.e., $x^*_i=0$ and $x^*_j=1$. Now define $x^\prime$ identical to $x^*$ except that $x^\prime_i=1$, $x^\prime_j=0$. Since $A_i \leq A_j$, $x^\prime$ is feasible, and it also satisfies the dominance inequality. Now, let $S^\prime$ be an optimal follower response to $x^\prime$. 
If $j\notin S^\prime$, then $S^\prime$ is also feasible for $x^*$, and $\Phi(x^*)\geq z(S^\prime)=\Phi(x^\prime)$. Otherwise, $(S^\prime \setminus \{j\}) \cup \{i\}$ is feasible for $x^*$ due to the assumption $c^\ell_i \leq c^\ell_j$ for $\ell=1,\ldots, L$. 
Moreover, since $\rho_{i}(S)\geq \rho_{j}(S)$ for all $S\subseteq N\setminus \{i,j\}$ we have $\rho_i(S^\prime \setminus \{j\})\geq \rho_j(S^\prime \setminus \{j\})$, which implies by definition of $\rho(\cdot)$ that
\begin{equation}
z((S^\prime \setminus \{j\}) \cup \{i\})\geq z(S^\prime).
\end{equation}
As a result, $\Phi(x^*)\geq z((S^\prime \setminus \{j\}) \cup \{i\}) \geq z(S^\prime)=\Phi(x^\prime)$, and it shows that $x^\prime$ is also optimal. 
In case the model includes $n>1$ dominance inequalities $x_{i_k}\geq x_{j_k}$, $k\in\{1,\ldots, n\}$, the same procedure applies: for each violated inequality $k$ set $x^\prime_{i_k}=1$ and $x^\prime_{j_k}=0$, and the same result follows.

\end{proof}


\begin{proposition}
Given a non-decreasing and monotone function $z$, a ground set $N$ and $i,j\in N$, if $z(\{i,j\})=\rho_i(\emptyset)$, then $\rho_{i}(S)\geq \rho_{j}(S)$, $\forall S\subseteq N$.
\label{prop:dominanceMaxCover}
\end{proposition}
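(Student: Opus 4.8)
The plan is to show that the hypothesis $z(\{i,j\}) = \rho_i(\emptyset)$ forces item $j$ to contribute nothing on top of $i$, and then to propagate this ``uselessness of $j$ relative to $i$'' to every set $S$ via submodularity. First I would unpack the hypothesis: by definition $z(\{i,j\}) = z(\{i\}) + \rho_j(\{i\})$ and $\rho_i(\emptyset) = z(\{i\}) - z(\emptyset)$, so the assumption is equivalent to $z(\{i\}) + \rho_j(\{i\}) = z(\{i\}) - z(\emptyset)$, i.e. $\rho_j(\{i\}) = -z(\emptyset)$. Since $z$ is non-decreasing we have $z(\emptyset) \le z(\{j\})$ and also $\rho_j(\{i\}) \ge 0$ by \eqref{eq:Nemhauser3} (monotonicity), which combined with $\rho_j(\{i\}) = -z(\emptyset) \le 0$ gives $\rho_j(\{i\}) = 0$ and $z(\emptyset) = 0$. (If the paper's convention already includes $z(\emptyset)=0$, this step just reads off $\rho_j(\{i\}) = 0$ directly; I would state the argument so it works either way, using monotonicity to pin down the sign.)

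Next I would use the diminishing returns property to push $\rho_j(\{i\}) = 0$ up to arbitrary supersets containing $i$. Fix any $S \subseteq N$. If $i \in S$, then $\{i\} \subseteq S$, so by \eqref{eq:Nemhauser3} we get $0 \le \rho_j(S) \le \rho_j(\{i\}) = 0$, hence $\rho_j(S) = 0$, and since $\rho_i(S) \ge 0$ the inequality $\rho_i(S) \ge \rho_j(S)$ holds trivially. The remaining case is $i \notin S$ (which is in any event the only case needed for Theorem \ref{theo:dominanceValidity}, where $S \subseteq N\setminus\{i,j\}$). Here I want $\rho_i(S) \ge \rho_j(S)$. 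The idea is: $\rho_j(S) = z(S \cup \{j\}) - z(S)$, and by submodularity applied to $S \subseteq S \cup \{i\}$ and element $j$, $\rho_j(S) \ge \rho_j(S \cup \{i\})$. But also $z(S\cup\{i\}\cup\{j\}) = z(S\cup\{i\}) + \rho_j(S\cup\{i\}) = z(S\cup\{i\})$ since $i$ is in the base set and $\rho_j(\cdot) = 0$ on any set containing $i$. So adding $j$ to $S$ increases $z$ by at most as much as adding $j$ then noticing it is dominated once $i$ is present --- the cleanest route is to compare $z(S\cup\{i,j\})$ two ways.

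Concretely: $z(S \cup \{i,j\}) = z(S\cup\{j\}) + \rho_i(S\cup\{j\}) \le z(S\cup\{j\}) + \rho_i(S)$ by submodularity ($S \subseteq S\cup\{j\}$), and $z(S\cup\{i,j\}) = z(S\cup\{i\}) + \rho_j(S\cup\{i\}) = z(S\cup\{i\}) = z(S) + \rho_i(S)$. Equating the two expressions for $z(S\cup\{i,j\})$: $z(S)+\rho_i(S) \le z(S\cup\{j\}) + \rho_i(S)$, wait --- this direction is backwards, so instead I would chain: $z(S)+\rho_i(S) = z(S\cup\{i,j\}) \ge z(S\cup\{j\})$ by monotonicity, i.e. $\rho_i(S) \ge z(S\cup\{j\}) - z(S) = \rho_j(S)$, which is exactly the claim. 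The main obstacle is bookkeeping the two cases ($i\in S$ versus $i\notin S$) and being careful about whether $z(\emptyset)=0$ is assumed; the submodularity manipulations themselves are short once the key fact $\rho_j(\cdot)=0$ on sets containing $i$ is established, and the final step is just one application of monotonicity to $z(S\cup\{i,j\}) \ge z(S\cup\{j\})$ together with the identity $z(S\cup\{i,j\}) = z(S) + \rho_i(S)$.
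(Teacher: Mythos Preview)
Your proposal is correct and follows essentially the same strategy as the paper: first deduce from the hypothesis that $\rho_j(\{i\})=0$, propagate this via diminishing returns to $\rho_j(S\cup\{i\})=0$ for all $S$, and then compare $\rho_i(S)$ and $\rho_j(S)$. The only substantive difference is in the last step: the paper invokes the Nemhauser inequality \eqref{eq:Nemhauser1} with the pair $(S\cup\{i\}, S\cup\{j\})$ to get $z(S\cup\{j\}) \le z(S\cup\{i\}) + \rho_j(S\cup\{i\}) - \rho_i(S\cup\{j\})$ and then drops the vanishing $\rho_j$-term, whereas you argue more directly via $z(S)+\rho_i(S)=z(S\cup\{i\})=z(S\cup\{i,j\}) \ge z(S\cup\{j\})=z(S)+\rho_j(S)$ using only monotonicity. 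Your route is slightly more elementary and avoids citing \eqref{eq:Nemhauser1}; the paper's route additionally yields the sharper bound $\rho_i(S)-\rho_j(S)\ge \rho_i(S\cup\{j\})$, though this extra information is not used. Your handling of $z(\emptyset)=0$ is also more careful than the paper's, which simply asserts it as part of ``monotone.''
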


\begin{proof}
Given that $z$ is monotone, i.e., $z(\emptyset)=0$, $z(\{i,j\})=\rho_i(\emptyset)+\rho_j(\{i\})$. If $z(\{i,j\})=\rho_i(\emptyset)$, then $\rho_j(\{i\})=\rho_j(\{i\}\cup S)=0$ for all $S\subseteq N$. We need to show that $\rho_{i}(S)- \rho_{j}(S)\geq 0$. By definition of marginal gains we have $\rho_{i}(S)- \rho_{j}(S)=z(S\cup\{i\})-z(S\cup \{j\})$. Using the submodular inequality \eqref{eq:Nemhauser1} we can write
\begin{equation}
z(S\cup\{j\})\leq z(S\cup \{i\}) + \rho_j(S\cup \{i\}) - \rho_i(S\cup \{j\}).
\end{equation}
Since $\rho_j(S \cup\{i\})=0$, the inequality becomes
\begin{equation}
z(S\cup \{i\}) - z(S\cup\{j\})\geq \rho_i(S\cup \{j\})\geq 0,
\end{equation}
which completes the proof.
\end{proof}

\begin{proposition}
Given an instance of the \BIIG for the bipartite graph $G=(N,M,A)$, let $M(i)=\{k\in M:(i,k)\in A\}$ denote the target set of each item $i\in N$. Let a pair of items $i,j\in N$ satisfy $M(j) \subseteq M(i)$ and $p_i\geq p_j$. Then, $\rho_{i}(S)\geq \rho_{j}(S)$, $\forall S\subseteq N\setminus \{i,j\}$.
\label{prop:dominanceBipartite}
\end{proposition}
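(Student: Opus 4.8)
The plan is to reduce the claim to a statement about marginal gains evaluated on arbitrary sets, using the hypotheses on the bipartite structure directly. For a fixed set $S\subseteq N\setminus\{i,j\}$, I would compute $\rho_i(S)$ and $\rho_j(S)$ explicitly from the formula $z(S)=\sum_{t\in M}\big(1-\prod_{\ell\in S:(\ell,t)\in A}(1-p_\ell)\big)$. Adding item $i$ to $S$ changes only the terms for targets $t\in M(i)$; for such a target the product over neighbors in $S$ gets multiplied by an extra factor $(1-p_i)$, so the increase in that target's activation probability is $p_i\cdot q_S(t)$, where $q_S(t):=\prod_{\ell\in S:(\ell,t)\in A}(1-p_\ell)\in[0,1]$ is the ``residual non-activation mass'' of target $t$ under $S$. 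Hence
\begin{equation}
\rho_i(S)=p_i\sum_{t\in M(i)} q_S(t),\qquad \rho_j(S)=p_j\sum_{t\in M(j)} q_S(t). \notag
\end{equation}

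With these closed forms, the inequality $\rho_i(S)\ge\rho_j(S)$ follows termwise. Since $M(j)\subseteq M(i)$, the sum defining $\rho_i(S)$ ranges over a superset of the index set for $\rho_j(S)$, and every summand $q_S(t)$ is non-negative, so $\sum_{t\in M(i)}q_S(t)\ge\sum_{t\in M(j)}q_S(t)\ge 0$. Combining this with $p_i\ge p_j\ge 0$ gives
\begin{equation}
\rho_i(S)=p_i\sum_{t\in M(i)}q_S(t)\ \ge\ p_j\sum_{t\in M(i)}q_S(t)\ \ge\ p_j\sum_{t\in M(j)}q_S(t)=\rho_j(S), \notag
\end{equation}
which is exactly the desired conclusion, valid for every $S\subseteq N\setminus\{i,j\}$.

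I do not anticipate a genuine obstacle here; the only care needed is the bookkeeping for the marginal-gain formula — making sure the factor $(1-p_i)$ enters multiplicatively and that targets outside $M(i)$ contribute zero to $\rho_i(S)$ — and handling the degenerate cases cleanly (e.g.\ $q_S(t)=1$ when $S$ has no neighbor of $t$, and the empty-sum convention when $M(j)=\varnothing$). One could alternatively phrase the argument without the explicit formula by writing $\rho_j(S)$ as a sum of per-target increments and invoking monotonicity of $1-\prod(1-p_\ell)$ in the index set, but the direct computation is shortest and makes the two-step bound ($p_i\ge p_j$ and $M(j)\subseteq M(i)$) transparent. Note the hypothesis $S\cap\{i,j\}=\varnothing$ is what guarantees $q_{S}(t)$ already excludes the factors $(1-p_i)$ and $(1-p_j)$, so the increment formulas above are exact.
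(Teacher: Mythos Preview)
Your proposal is correct and follows essentially the same approach as the paper: both arguments compute the marginal gain explicitly as $\rho_i(S)=p_i\sum_{t\in M(i)}\prod_{\ell\in S:(\ell,t)\in A}(1-p_\ell)$ by observing that only targets in $M(i)$ are affected when $i$ is added, and then apply the two hypotheses $M(j)\subseteq M(i)$ and $p_i\ge p_j$ together with non-negativity of the residual products to obtain the inequality.
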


\begin{proof}
Using the definition of $z$ under \BIIG, we have for each $S\in N\setminus \{i,j\}$ that 
\begin{equation}
\begin{split}
\rho_i(S) & = z(S\cup \{i\})-z(S) = \sum_{k\in M} \big( 1- \prod_{i^\prime\in S\cup \{i\}: (i^\prime,k)\in A} (1-p_{i^\prime}) \big) - \sum_{k\in M} \big( 1- \prod_{i^\prime\in S: (i^\prime,k)\in A} (1-p_{i^\prime}) \big) \\
&= \sum_{k\in M(i)} \big(  \prod_{i^\prime\in S: (i^\prime,k)\in A} (1-p_{i^\prime})  - \prod_{i^\prime\in S\cup \{i\}: (i^\prime,k)\in A} (1-p_{i^\prime})  \big) \\
& \phantom{aa} + \sum_{k\in M\setminus M(i)} \big(  \prod_{i^\prime\in S: (i^\prime,k)\in A} (1-p_{i^\prime})  - \prod_{i^\prime\in S\cup \{i\}: (i^\prime,k)\in A} (1-p_{i^\prime})  \big) \\
& = \sum_{k\in M(i)} \prod_{i^\prime \in S:(i^\prime,k)\in A}(1-p_{i^\prime})p_i 
\geq \sum_{k\in M(j)} \prod_{i^\prime \in S:(i^\prime,k)\in A}(1-p_{i^\prime})p_j = \rho_j(S)
\end{split}
\end{equation}
The reason of the first equality in the last line is that for each $k\in M\setminus M(i)$ the two products are identical, i.e., $\{i^\prime \in S:(i^\prime,k)\in A\}=\{i^\prime \in S\cup \{i\}:(i^\prime,k)\in A\}$. To put it simply, only the activation probabilities of $k\in M(i)$ are affected due to adding $i$ to $S$. The inequality follows from the assumptions $M(j) \subseteq M(i)$ and $p_i\geq p_j$, in addition to all terms in the product being non-negative.
\end{proof}

In our implementation, for \WMCIG instances we use the condition in Proposition \ref{prop:dominanceMaxCover} to detect pairs that fit into the description in Theorem \ref{theo:dominanceValidity}, due to problem characteristics discussed in Remark \ref{rem:liftSpecial}. For \BIIG on the other hand, we check the condition in Proposition \ref{prop:dominanceBipartite} for each pair. We add the resulting dominance inequalities to the initial model. If the conditions are fulfilled in both directions, then the items can substitute each other, and only one of the resulting inequalities is used.

\subsection{Maximal Follower Solutions}

A follower solution $\hat{S}\in \mathcal{S}$ is called maximal if there is no $S^\prime \in \mathcal{S} $ such that $\hat{S}\subset S^\prime $. \citet{fischetti2019interdiction} consider only maximal follower solutions while separating their interdiction cuts since for their setting they showed that their proposed interdiction cut for a maximal solution $\hat{y}$ dominates the one for $y^\prime <\hat{y}$. 
\begin{theorem}
Consider a maximal follower solution $\hat{S}\in\mathcal{S}$ and $S^\prime \subset \hat{S}$. The basic interdiction cut \eqref{eq:basicCut} for $\hat{S}$ does not dominate the one for $S^\prime$.
\label{prop:maximal}
\end{theorem}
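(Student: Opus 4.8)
The plan is to prove this \emph{negative} statement by producing an explicit point $(w,x)$ that satisfies the basic SIC \eqref{eq:basicCut} written for $\hat{S}$ but violates the basic SIC written for $S^\prime$; since ``the cut for $\hat{S}$ dominates the cut for $S^\prime$'' means every $(w,x)$ feasible for the former is feasible for the latter, exhibiting one such separating point suffices. First I would set up a generic construction: given $\hat{S}$ and $S^\prime \subset \hat{S}$, consider the interdiction vector $x$ with $x_i=1$ for $i\in\hat{S}\setminus S^\prime$ and $x_i=0$ otherwise. Evaluating \eqref{eq:basicCut} for $\hat{S}$ at this $x$ gives $w\geq z(\hat{S})-\sum_{i\in\hat{S}\setminus S^\prime}\rho_i(\emptyset)$, whereas \eqref{eq:basicCut} for $S^\prime$ gives $w\geq z(S^\prime)$.

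Next I would compare the two right-hand sides. Ordering the elements of $\hat{S}\setminus S^\prime$ and telescoping marginal gains from $S^\prime$ up to $\hat{S}$, together with \eqref{eq:Nemhauser3}, yields $z(\hat{S})-z(S^\prime)\leq\sum_{i\in\hat{S}\setminus S^\prime}\rho_i(S^\prime)\leq\sum_{i\in\hat{S}\setminus S^\prime}\rho_i(\emptyset)$, so $z(\hat{S})-\sum_{i\in\hat{S}\setminus S^\prime}\rho_i(\emptyset)\leq z(S^\prime)$, and the inequality is \emph{strict} whenever some item of $\hat{S}\setminus S^\prime$ has strictly diminishing marginal gain relative to $S^\prime$ (overlapping coverage). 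In that case any $w$ chosen strictly between the two bounds gives a point in the $[0,1]$-box fulfilling the cut for $\hat{S}$ while violating the cut for $S^\prime$, which is exactly non-domination.

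I would then instantiate this on the \WMCIG instance of Example~\ref{example:WMCIG} (with $k\geq 1$ so the leader polytope contains the point used): there $\hat{S}=\{1,2\}$ is maximal because the follower budget $B=2$ equals $|\hat{S}|$, and $S^\prime=\{1\}\subset\hat{S}$. The basic SIC for $\hat{S}$ is $w\geq 20-11x_1-14x_2$ and the one for $S^\prime$ is $w\geq 11-11x_1$; at $x=(0,1,0)$ these read $w\geq 6$ and $w\geq 11$, so $(w,x)=(6,0,1,0)$ satisfies the former but not the latter, establishing the claim. The only point requiring care is simultaneously keeping $\hat{S}$ maximal and retaining the overlap that forces strictly diminishing returns; this is immediate since any maximum-cardinality feasible follower set is maximal, and overlap is trivially arranged (facilities $1$ and $2$ share customer $a$). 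I expect the argument to be short — the conceptual ``obstacle'' is simply recognizing that, unlike the linear interdiction-cut setting of \citet{fischetti2019interdiction} where the maximal solution dominates, submodular overlap is precisely what breaks this property, so both maximal and non-maximal follower solutions must be separated.
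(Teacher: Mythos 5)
Your proof is correct and follows essentially the same route as the paper: both evaluate the two cuts at the interdiction vector with $x_i=1$ exactly on $\hat{S}\setminus S^\prime$ and use the telescoping bound $z(\hat{S})-z(S^\prime)\leq\sum_{i\in\hat{S}\setminus S^\prime}\rho_i(S^\prime)\leq\sum_{i\in\hat{S}\setminus S^\prime}\rho_i(\emptyset)$. Your version is in fact slightly more careful, since you observe that genuine non-domination requires strictly diminishing marginal gains (the paper's ``contradiction'' as written only forces the difference to be zero, not negative) and you exhibit a concrete \WMCIG witness where strictness holds.
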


\begin{proof}

Suppose that the basic interdiction cut for $\hat{S}$ dominates the one for $S^\prime$. Then the RHS of the basic cut \eqref{eq:basicCut} for $S^\prime$ should be less than or equal to the RHS of the cut for $\hat{S}$, for all $x \in X$. Subtracting the former from the latter yields
\begin{equation}
\begin{split}
 & z(\hat{S})- \sum_{i\in \hat{S} }\rho_i(\emptyset)x_i -  z(S^\prime) +\sum_{i\in S^\prime}\rho_i(\emptyset)x_i = z(\hat{S})-z(S^\prime)- \sum_{i\in \hat{S} \setminus S^\prime}\rho_i(\emptyset)x_i \\
&\leq \sum_{i\in \hat{S} \setminus S^\prime}\rho_i(S^\prime)- \sum_{i\in \hat{S} \setminus S^\prime}\rho_i(\emptyset)x_i  = \sum_{i\in \hat{S} \setminus S^\prime}\big(\rho_i(S^\prime)-\rho_i(\emptyset)x_i \big).
\end{split}
\end{equation}
The inequality sign comes from the submodular inequality \eqref{eq:Nemhauser4}. Consider the case that $x_i=1$ for $i\in \hat{S} \setminus S^\prime$. The difference will be non-positive since $\rho_i(S^\prime)\leq \rho_i(\emptyset)$, which is a contradiction.

\end{proof}

\begin{theorem}
Consider a maximal follower solution $\hat{S}\in\mathcal{S}$ with the ordering $(i_1,...,i_T)$ and $S^\prime\subset \hat{S}$ with ordering $(i_1,...,i_{T-k})$, where $T>k>0$. The improved interdiction cut \eqref{eq:ImprovedCut} for $\hat{S}$ dominates the one for $S^\prime$.
\label{prop:maximal2}
\end{theorem}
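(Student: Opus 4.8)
The plan is to compare the right-hand sides of the improved SIC \eqref{eq:ImprovedCut} written for $\hat{S}$ versus for $S^\prime$, and show the former is always at least as large for every feasible leader solution $x \in X$. Write $\hat{S}=\{i_1,\ldots,i_T\}$ with the given ordering, so $S^\prime=\{i_1,\ldots,i_{T-k}\}$ inherits exactly the prefix ordering, and crucially $\hat{S}_{(t)}=S^\prime_{(t)}$ for every $t\in\{1,\ldots,T-k\}$ since both are the initial segment $\{i_1,\ldots,i_{t-1}\}$. The RHS for $\hat{S}$ is $z(\hat{S})-\sum_{t=1}^T \rho_{i_t}(\hat{S}_{(t)})x_{i_t}$ and the RHS for $S^\prime$ is $z(S^\prime)-\sum_{t=1}^{T-k}\rho_{i_t}(S^\prime_{(t)})x_{i_t} = z(S^\prime)-\sum_{t=1}^{T-k}\rho_{i_t}(\hat{S}_{(t)})x_{i_t}$.

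First I would subtract the $S^\prime$-RHS from the $\hat{S}$-RHS. The terms $\sum_{t=1}^{T-k}\rho_{i_t}(\hat{S}_{(t)})x_{i_t}$ cancel, leaving
\begin{equation*}
\big(z(\hat{S})-z(S^\prime)\big) - \sum_{t=T-k+1}^{T}\rho_{i_t}(\hat{S}_{(t)})x_{i_t}.
\end{equation*}
Now the key identity from the proof of Theorem~\ref{theo:ImprovedCut} gives $z(\hat{S})-z(S^\prime)=\sum_{t=T-k+1}^{T}\rho_{i_t}(\hat{S}_{(t)})$ — this is precisely the telescoping of marginal gains along the suffix $i_{T-k+1},\ldots,i_T$ built on top of $S^\prime=\hat{S}_{(T-k+1)}$. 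Substituting, the difference equals $\sum_{t=T-k+1}^{T}\rho_{i_t}(\hat{S}_{(t)})(1-x_{i_t})$, which is nonnegative because each $\rho_{i_t}(\hat{S}_{(t)})\geq 0$ by \eqref{eq:Nemhauser3} and $x_{i_t}\in\{0,1\}$ (or even just $x_{i_t}\leq 1$ for fractional $x$). Hence the $\hat{S}$-cut RHS dominates the $S^\prime$-cut RHS for all $x\in X$, which is the definition of domination.

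**The main obstacle**, such as it is, is bookkeeping rather than mathematics: one must be careful that the ordering on $S^\prime$ is chosen as the prefix of the ordering on $\hat{S}$ (the statement already stipulates this), so that the prefix sets $\hat{S}_{(t)}$ and $S^\prime_{(t)}$ literally coincide for $t\leq T-k$ and the cancellation is exact; without that alignment the marginal-gain coefficients would not match up and the clean telescoping identity would fail. I would also remark that monotonicity of $z$ is not even needed for nonnegativity of the residual here beyond what \eqref{eq:Nemhauser3} already supplies, and contrast this with Theorem~\ref{prop:maximal}: the improvement over the basic cut is exactly that replacing $\rho_{i_t}(\emptyset)$ by the smaller $\rho_{i_t}(\hat{S}_{(t)})$ makes the extra suffix terms small enough to be absorbed by the gain $z(\hat{S})-z(S^\prime)$, whereas with $\rho_{i_t}(\emptyset)$ coefficients they need not be.
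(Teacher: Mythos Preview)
Your proof is correct and follows essentially the same approach as the paper: subtract the two right-hand sides, use the coincidence of the prefix sets $\hat S_{(t)}=S'_{(t)}$ for $t\le T-k$ to cancel the common terms, apply the telescoping identity $z(\hat S)-z(S')=\sum_{t=T-k+1}^{T}\rho_{i_t}(\hat S_{(t)})$, and conclude that the difference equals $\sum_{t=T-k+1}^{T}\rho_{i_t}(\hat S_{(t)})(1-x_{i_t})\ge 0$. Your additional commentary on why the prefix-ordering assumption is essential and on the contrast with Theorem~\ref{prop:maximal} is accurate and adds useful intuition beyond what the paper states.
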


\begin{proof}
We need to show that the RHS of \eqref{eq:ImprovedCut} for $S^\prime$ is less than or equal to the RHS of the cut for $\hat{S}$ when the given orderings are used to generate the cuts. Define $\hat{S}_{(t)}=S^\prime_{(t)}=\{i_1,\ldots,i_{t-1}\}$ for $T \geq t \geq 2$, and $\hat{S}_{(1)}=S^\prime_{(1)}=\emptyset$, for the sake of better notation, even though not all $S^\prime_{(t)}$ are subsets of $S^\prime$. Then, the difference which needs to be proven non-negative is 
\begin{equation}
z(\hat{S})-\sum_{t=1}^T \rho_{i_t}(\hat{S}_{(t)})x_{i_t} - z(S^\prime) + \sum_{t=1}^{T-k} \rho_{i_t}(S^\prime_{(t)})x_{i_t} =  z(\hat{S}) - z(S^\prime) -\sum_{t=T-k+1}^T \rho_{i_t}(\hat{S}_{(t)})x_{i_t}.
\end{equation}
Since $\hat{S}\setminus S^\prime=\{i_{T-k-1},\ldots,i_T\}$, the difference $z(\hat{S}) - z(S^\prime)$ can be computed exactly and the above expression is rewritten as follows.
\begin{equation}
\sum_{t=T-k+1}^T \rho_{i_t} (S^\prime_{(t)}) -\sum_{t=T-k+1}^T \rho_{i_t}(\hat{S}_{(t)})x_{i_t} =  \sum_{t=T-k+1}^T \rho_{i_t} (\hat{S}_{(t)}) (1-x_{i_t})\geq 0
\end{equation}

\end{proof}

Theorem \ref{prop:maximal2} indicates that replacing a non-maximal follower solution with a maximal one by appending new items to it without disrupting the initial ordering yields a better improved cut. On the other hand, if the improved cuts for $\hat{S}$ and $S^\prime\subset \hat{S}$ are generated based on arbitrary orderings of their elements, it is not possible to claim that one cut is better than the other since the value of the differences on the right-hand-side depends on $x$.

\subsection{Separation of Basic and Improved Submodular Interdiction Cuts}
\label{section:Sep_basic}

We have different separation procedures for SICs for integer and fractional solutions $x^*$ encountered in our B\&C. We first discuss the separation of integer solutions and then the separation of fractional solutions.

\subsubsection{Separation of Integer Solutions.}
\label{section:Sep_basic_integer}
Given a leader solution $x^*\in X$, let $N_{\text{-}x}=N\setminus N_x=\{i\in N: x_i=0\}$ be the set of items available to the follower and $y\in \{0,1\}^{|N_{\text{-}x}|}$ denote the characteristic vector of any follower solution $S\subseteq N_{\text{-}x}$, i.e., $S=\{i\in N_{\text{-}x}: y_i=1\}$. As a result of submodular inequalities \eqref{eq:Nemhauser1} and \eqref{eq:Nemhauser2}, the follower's problem can be formulated as an MIP as follows \citep{ahmed2011maximizing}, which yields our separation problem $(SEP)$.
\begin{align}
(SEP) \quad	\Phi(x)= &\max \theta \label{eq:sepProblem1} \\
	&\text{s.t.} \notag\\
	& \theta \leq z(\hat{S}) +\sum_{i\in N_{\text{-}x}\setminus \hat{S}} \rho_i(\hat{S})y_i  - \sum_{i\in \hat{S}}\rho_i(N_{\text{-}x} \setminus \{i\})(1-y_i) &\hat{S}& \subseteq N_{\text{-}x} \label{eq:sepProblem2}\\
	& \theta \leq z(\hat{S}) +\sum_{i\in N_{\text{-}x}\setminus \hat{S}} \rho_i(\emptyset)y_i  - \sum_{i\in \hat{S}}\rho_i(\hat{S} \setminus \{i\})(1-y_i) &\hat{S}& \subseteq N_{\text{-}x} \label{eq:sepProblem3} \\
	& \sum_{i\in N}c^\ell_i y_i\leq Q_\ell  &\ell&\in \{1,\ldots, L\}\label{eq:sepProblem4}\\
	& y_i\in \{0,1\} &i&\in N_{\text{-}x} 	\label{eq:sepProblem5}
\end{align}

\paragraph{Solving the separation problem $(SEP)$.} 
The separation problem $(SEP)$ can be solved via a branch-and-cut scheme where submodular cuts \eqref{eq:sepProblem2} and \eqref{eq:sepProblem3} are generated as they are needed. To this end, the formulation composing of \eqref{eq:sepProblem1}, \eqref{eq:sepProblem4}, and \eqref{eq:sepProblem5} is solved using an MILP solver. Let $(\theta^*,y^*)$ be the solution of the LP at the current (follower) B\&C tree node. If $y^*$ is integer feasible, then it defines a unique set $\hat{S}=\{i\in N_{\text{-}x}: y^*_i=1\}$ and the value of $z(\hat{S})$ is computed according to the definition of $z$. If $\theta^*> z(\hat{S})$, then \eqref{eq:sepProblem2} and \eqref{eq:sepProblem3} are generated for $\hat{S}$ (by evaluating the necessary marginal gains); otherwise no cut is added. For fractional $y^*$, $\hat{S}$ is obtained in a heuristic way as follows. $\hat{S}$ is initialized as an empty set, the items $i \in N_{\text{-}x}$ are sorted in non-increasing order of $y_i$ values, and they are added to $\hat{S}$ in this order until a knapsack constraint \eqref{eq:sepProblem4} is violated. \eqref{eq:sepProblem2} and \eqref{eq:sepProblem3} are obtained for $\hat{S}$ and their amounts of violation are computed for $(\theta^*,y^*)$. The violated cuts are added to the problem, if any. Notice that the $\rho_i(N_{\text{-}x} \setminus \{i\})$ values in \eqref{eq:sepProblem2} are independent of $\hat{S}$, and can be approximated by $\rho_i(N \setminus \{i\})$. We compute them once in advance, instead of calculating each time the follower's problem is solved. The cut \eqref{eq:sepProblem2} is still valid since $\rho_i(N \setminus \{i\}) \leq \rho_i(N_{\text{-}x} \setminus \{i\})$ for any $x$. Also note that, while solving the subproblem of \BIIG we make use of the greedy fractional separation proposed in \cite{salvagnin2019some}. 

\paragraph{The separation procedure.} 
Now, let $(w^*,x^*)$ be the optimal solution at the current B\&C node with $(w^*,x^*)$ integer. The SICs are separated exactly as follows. First, the separation problem $(SEP)$
is solved on $N_{\text{-}x^*}$ as described above to obtain $\hat{S}$, which is defined by its optimal solution, and its objective value $z(\hat{S})$. If $w^*<z(\hat{S})$, then $\hat{S}$ yields a violated basic \eqref{eq:basicCut} or improved \eqref{eq:ImprovedCut} SIC. 
While the coefficients in a basic cut are independent of $\hat{S}$ and only computed once as a pre-processing step, the coefficients of an improved cut depend on $\hat{S}$ and require an ordering of its elements. For the latter, the items in $\hat{S}$ are sorted in non-increasing order of $\rho_i(\emptyset)$ values, which performs better than non-decreasing or random ordering in our preliminary experiments.

Although we need to separate integer solutions exactly for the correctness of our algorithm, it is also possible to first try to use a heuristic method to find a violated cut instead of solving the separation problem to optimality to potentially speed up the separation. We propose an \emph{enhanced} exact separation procedure as an alternative to the method described above. We first implement a classical greedy algorithm which is given as Algorithm \ref{Alg:greedy} to find a feasible follower solution $\hat{S}$. If $\hat{S}$ leads to violated SIC, then we are done. Otherwise, we solve the separation problem $(SEP)$ with a B\& C until a desired solution is reached. The procedure is summarized in Algorithm \ref{Alg:IntSepHeur} which returns $\hat{S}$ yielding a violated SIC, if there exists one. Otherwise, it returns an empty set which implies that the current solution is the new incumbent. The ordering for the improved cut is obtained as before. 

\begin{algorithm}[h]
\caption{$Greedy(N,\hat{S}, O)$}
\begin{algorithmic}[1]
\WHILE{$\exists i \in N \setminus \hat{S}$ such that $c(\hat{S}\cup \{i\})\leq Q$ }
\STATE{ $i^* \leftarrow \arg \max_{i\in N \setminus \hat{S} : c(\hat{S}\cup \{i\})\leq Q } z(\hat{S} \cup \{i\})   $}
\STATE{$\hat{S}\leftarrow \hat{S} \cup \{i^*\}$, $O.add(i^*)$}
\ENDWHILE
\STATE{Return $(\hat{S}, O)$}
\end{algorithmic}
\label{Alg:greedy}
\end{algorithm}

\begin{algorithm}[h]
\caption{Enhanced Separation of Integer Solutions}
{\textbf{Input:} An integer feasible leader solution $(w^*,x^*)$}\\
{\textbf{Output:} A follower solution $\hat{S} \in \mathcal{S}$}
\begin{algorithmic}[1]
\STATE{$\hat{S}\leftarrow \emptyset$, $N_{\text{-}x}=\{i\in N: x^*_i=0\}$, $O=()$}
\STATE{$(\hat{S},O)\leftarrow Greedy(N_{\text{-}x}, \hat{S}, O)$}
\IF{the SIC defined by $\hat{S}$ is not violated at $(w^*,x^*)$}
\STATE{Solve the separation problem until a feasible solution $y^*$ with objective $\theta^*> w^*$ is found}
\IF{a solution is found}
\STATE{$\hat{S} \leftarrow \{i\in N_{\text{-}x}: y^*_i=1\}$}
\ELSE
\STATE{There is no violated SIC, $\hat{S}\leftarrow \emptyset$}
\ENDIF
\ENDIF
\STATE{Return $\hat{S}$}
\end{algorithmic}
\label{Alg:IntSepHeur}
\end{algorithm}

\subsubsection{Separation of Fractional Solutions.}
\label{section:Sep_basic_frac}

For fractional $x^*$, $\hat{S}$ is obtained in a greedy way. If the relative violation of the resulting cut exceeds the threshold of 1\%, then it is added to the problem. 
The following three options are considered to obtain $\hat{S}$ and the ordering of its elements which is required for the improved cut. 

\begin{itemize}
\item S1 (see Algorithm \ref{Alg:S1}): First a temporary ground set is determined by using only totally non-interdicted items, i.e., $i:x^*_i=0$, and then the $Greedy(\cdot)$ function is called. While generating the basic cut, non-maximal follower solutions are used 
If the cut to be generated is an improved cut and the solution is not maximal, the $Greedy(\cdot)$ function is re-invoked to reach a maximal solution. This is done according to Theorem \ref{prop:maximal2}, i.e., items are appended to the end of the current ordering by $O.add(\cdot)$.
\item S2 (see Algorithm \ref{Alg:S2}): The same procedure used for S1 is followed except the definition of the ground set. Here, it is obtained based on a rounding of $x^*$, which allows to include some items with fractional $x^*_i$ in the ground set. 
\item S3 (see Algorithm \ref{Alg:S3}): Another greedy approach is used to obtain a maximally violated basic/improved cut. The violation increase due to adding an item $i\in N\setminus \hat{S}$ to $\hat{S}$ is denoted by $v_i(\hat{S})$ and evaluated by $\rho_i(\hat{S})-\rho_i(\emptyset)x^*_i$ for the basic cut and by $\rho_i(\hat{S})(1-x^*_i)$ for the improved cut. The item with the maximum $v_i(\hat{S})$ is added to the set until the budget is reached or the maximum $v_i(\hat{S})$ is negative (only possible for the basic cut).
\end{itemize}

\begin{algorithm}[h]
\caption{S1 }
{\textbf{Input:} A fractional leader solution $x^*$}\\
{\textbf{Output:} A follower solution $\hat{S} \in \mathcal{S}$ and an ordering of its elements}
\begin{algorithmic}[1]
\STATE{$\hat{S}\leftarrow \emptyset$, $N_{\text{-}x}=\{i\in N: x^*_i=0\}$, $O=()$}
\STATE{$(\hat{S},O)\leftarrow Greedy(N_{\text{-}x}, \hat{S}, O)$}
\IF{${cutType}={Improved}$ and $\exists i \in N \setminus \hat{S}$ such that $c(\hat{S}\cup \{i\})\leq Q$ }
\STATE{$(\hat{S},O)\leftarrow Greedy(N, \hat{S}, O)$}
\ENDIF
\STATE{Return $\hat{S}$ and ordering $O=(i_1, \ldots, i_{|\hat{S}|})$}
\end{algorithmic}
\label{Alg:S1}
\end{algorithm}

\begin{algorithm}[h]
\caption{S2 }
{\textbf{Input:} A fractional leader solution $x^*$}\\
{\textbf{Output:} A follower solution $\hat{S} \in \mathcal{S}$ and an ordering of its elements}
\begin{algorithmic}[1]
\STATE{$\hat{S}\leftarrow \emptyset$, $O=()$, $x^\prime\leftarrow 0$}
\FOR{each $i:x^*_i=1$ }\STATE{$x^\prime_i\leftarrow 1$}
\ENDFOR
\WHILE{$\exists i \in N:\, x^\prime_i=0, A(x^\prime +e_i)\leq b$}
\STATE{$i^\prime \leftarrow \arg \max_{i\in N:\, x^\prime_i=0, A(x^\prime +e_i)\leq b}  x^*_i  $}
\STATE{$x^\prime_i \leftarrow 1$}
\ENDWHILE
\STATE{$N_{\text{-}x}=\{i\in N: x^\prime_i=0\}$}
\STATE{$(\hat{S},O)\leftarrow Greedy(N_{\text{-}x}, \hat{S}, O)$}
\IF{${cutType}={Improved}$ and $\exists i \in N \setminus \hat{S}$ such that $c(\hat{S}\cup \{i\})\leq Q$ }
\STATE{$(\hat{S},O)\leftarrow Greedy(N, \hat{S}, O)$}
\ENDIF
\STATE{Return $\hat{S}$ and ordering $O=(i_1, \ldots, i_{|\hat{S}|})$}
\end{algorithmic}
\label{Alg:S2}
\end{algorithm}

\begin{algorithm}[h]
\caption{S3}
{\textbf{Input:} A fractional leader solution $x^*$}\\
{\textbf{Output:} A follower solution $\hat{S} \in \mathcal{S}$ and an ordering of its elements}
\begin{algorithmic}[1]
\STATE{$\hat{S}\leftarrow \emptyset$ , $O=()$}
\WHILE{$\exists i \in N \setminus \hat{S}$: $c(\hat{S}\cup \{i\})\leq Q$ and $\max_{i\in N\setminus \hat{S}: \,c(\hat{S}\cup \{i\})\leq Q} v_i(\hat{S}) \geq 0$ }
\STATE{$i^* \leftarrow \arg \max_{i\in N\setminus \hat{S}: \,c(\hat{S}\cup \{i\})\leq Q} v_i(\hat{S})$}
\STATE{$\hat{S}\leftarrow \hat{S} \cup \{i^*\}$, $O.add(i^*)$}
\ENDWHILE
\STATE{Return $\hat{S}$ and ordering $O=(i_1, \ldots, i_{|\hat{S}|})$}
\end{algorithmic}
\label{Alg:S3}
\end{algorithm}

\subsection{Separation of Lifted and Alternative Cuts}
\label{section:Sep_lift_alt}
In our implementation, the lifted and alternative cuts are obtained heuristically, after the basic/improved cut is generated. For lifted cuts, as a preprocessing step the \textit{dominating list} $D_i$, which contains the items that can replace $i$ according to Theorem \ref{theo:liftCut4}, is computed for each $i\in N$ using the problem specific implications of superiority described in Remark \ref{rem:liftSpecial}.
Then, given a follower solution $\hat{S}$, sets $A$ and $B$ are initialized as empty sets and determined incrementally as follows. The items in $\hat{S}$ are sorted in non-increasing order of $\rho_i(\emptyset)$ values. The first item $i\in \hat{S}$ is picked and the value of $\big(\rho_j(\hat{S}\cup B)-\rho_i(\hat{S}\cup \{j\}\setminus \{i\}) \big)(1-x^*_j)$ is checked for each $j\in D_i\setminus \hat{S}$. If the maximum of these values is positive, $i$ is added to $A$ and the relevant $j$ is added to $B$, and they are not considered in further evaluations. Once all $i\in\hat{S}$ are considered, the final cut is reached. 

For an alternative cut, $A$ and $B$ are initialized as empty sets and obtained incrementally as follows. Given a follower solution $\hat{S}$, the items in $\hat{S}$ are sorted in non-increasing order of $\rho_i(\emptyset)$ values. Item $i$ is picked according to this order and the value of $\rho_{j}(\hat{S}\cup B\setminus \{i\})(x^*_{i}-x^*_{j})$ is checked for each $j \in N\setminus\hat{S}$ such that $c^\ell_{i}\geq c^\ell_{j}$ for each $\ell=1,\ldots ,L$. If the largest one of these values is positive, $i$ is added to $A$, $j$ is added to $B$ and they are not considered for further evaluations. Once all $i\in\hat{S}$ are processed, the resulting sets $A$ and $B$ yield the final alternative cut. Notice that this procedure would not yield a new cut for an integer leader solution $x^*$, as the integer separation procedure leads to $x^*_{i}=0$ for $i \in \hat{S}$. For this reason, alternative cuts are only generated for fractional $x^*$.



\section{Computational Results}
\label{section:Computational}
The algorithms we propose have been implemented in C++ using IBM ILOG CPLEX 12.10 as the MILP solver with its default settings. Each experiment uses a single thread of an Intel Xeon E5-2670v2 machine with 2.5 GHz processor. The time limit is 3600 seconds and the memory allocated to each experiment is 12 GB. We consider the two applications introduced in Section \ref{section:Applications} and generate random data sets of them to test our framework. In the following sections we present the instance generation procedures and the obtained results. All of the instances used are available at \url{https://msinnl.github.io/pages/bilevel.html}.

In our experiments, the following settings are considered for our B\&C:


\begin{itemize}
\item B: Only the basic cut \eqref{eq:basicCut} is used for separation.
\item I: Instead of a basic cut, an improved submodular interdiction cut \eqref{eq:ImprovedCut} is separated.
\item L: Once \eqref{eq:basicCut} or \eqref{eq:ImprovedCut} is obtained, it is lifted heuristically to \eqref{eq:liftCut4}.
\item D: Dominance inequalities are added to the initial model according to Theorem \ref{theo:dominanceValidity}.
\item A: In addition to the basic cut \eqref{eq:basicCut}, improved cut \eqref{eq:ImprovedCut}, or lifted cut \eqref{eq:liftCut4}, the alternative cut \eqref{eq:modCut2} is generated heuristically. 
\item E: For the separation of integer solutions, the enhanced procedure in Algorithm \ref{Alg:IntSepHeur} is used.
\end{itemize}
We include each of the components above incrementally. The basic setting is B-$s$ where $s\in\{\text{S1, S2, S3}\}$ denotes the method used to obtain $S$ for fractional $x^*$ and it is followed by I-$s$, IL-$s$, ILD-$s$, ILDA-$s$, and finally ILDAE-$s$ which includes all improvements and cut types we propose.

\subsection{Weighted Maximal Covering Interdiction Game}

WMCI instances used in our study are generated following a similar procedure proposed by \citet{revelle2008solving}. Customer coordinates are generated randomly in $[0,10]$. Potential facility locations are the same as the current customer locations, i.e., $n=m$, and $m \in \{50,60,70,80,90,100\}$. The profits $p_j$, $\forall j$ are randomly generated in $[1,100]$. Coverage is determined based on Euclidean distances and radius of coverage $r\in\{1,2,3\}$, i.e., a facility at location $i$ covers customer $j$ if $d_{ij}\leq r$ where $d_{ij}$ is the Euclidean distance between $i$ and $j$. The number of facilities to open is $B=0.1n$ and the interdiction budget $k$ takes value in $\{0.1 n, 0.2n\}$. Three instances are generated for each $(n,r,k)$ combination. 

Recall that condition $(ii)$ of Theorem \ref{theo:liftCut4} is equivalent to $\rho_{a_k}(b_k)=0$ for \WMCIG as explained in Remark \ref{rem:liftSpecial}. Therefore, the lifted cuts generated for this problem are in the form of \eqref{eq:liftCut2}.

The	plots of the results in terms of running times and final optimality gaps for each separation option $s\in\{S1,S2,S3\}$ are provided in Figures \ref{fig:WMCI S1}, \ref{fig:WMCI S2} and \ref{fig:WMCI S3}, respectively. The optimality gaps are obtained by $100\times (z^*-\underline{z})/(0.1 + z^*)$ where $z^*$ and $\underline{z}$ denote the objective value of the best integer solution and the best bound, respectively. We see in Figure \ref{fig:WMCI S1} that using improved cuts (I) instead of the basic one (B) causes a significant improvement in terms of running time and final optimality gaps. 
While the ratio of instances solved to optimality is 56\% under B-S1, it is increased to 73\% under I-S1. Adding lifted cuts (L) also improves both measures, especially final optimality gaps at the end of the time limit. 
The next component, dominance inequalities yields a significant improvement and the ratio of instances solved to optimality becomes 93\%. While the addition of alternative cuts to the improved/lifted ones does not make an apparent contribution to the performance, enhanced integer separation decreases the average solution time. The reason of the ineffectiveness of alternative cuts can be explained by the ground set definition used for S1, i.e, $x_i^*=0$ for $i\in \hat{S}$ except the cases in which some $i$ with $x_i^*>0$ are also included to reach a maximal set. This definition usually causes to have non-positive $(x_{a_k}-x_{b_k})$ values in the last term of alternative cuts which results in a smaller violation then the original improved cut. 

\begin{figure}[h!tb]
{\includegraphics[width=0.5\textwidth]{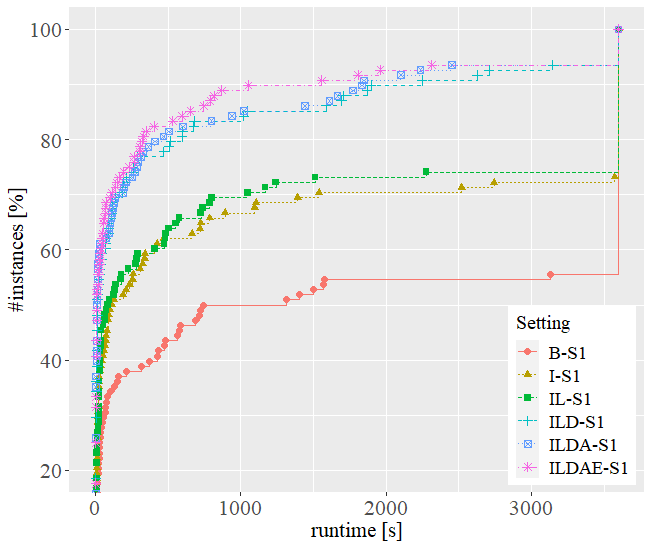}%
\includegraphics[width=0.5\textwidth]{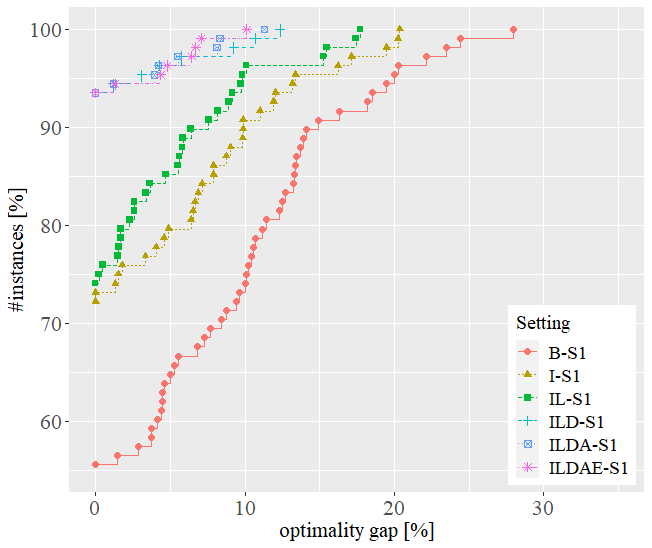}}
{\caption{WMCI S1 Results.}\label{fig:WMCI S1}}
\end{figure}

\begin{figure}[h!tb]
{\includegraphics[width=0.5\textwidth]{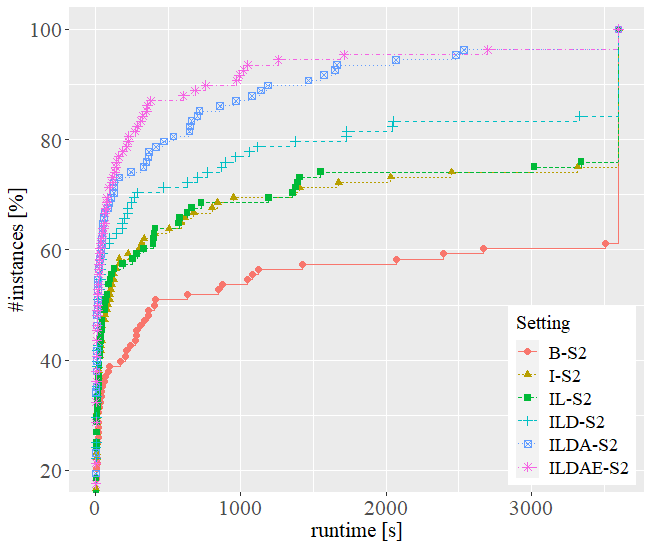}%
\includegraphics[width=0.5\textwidth]{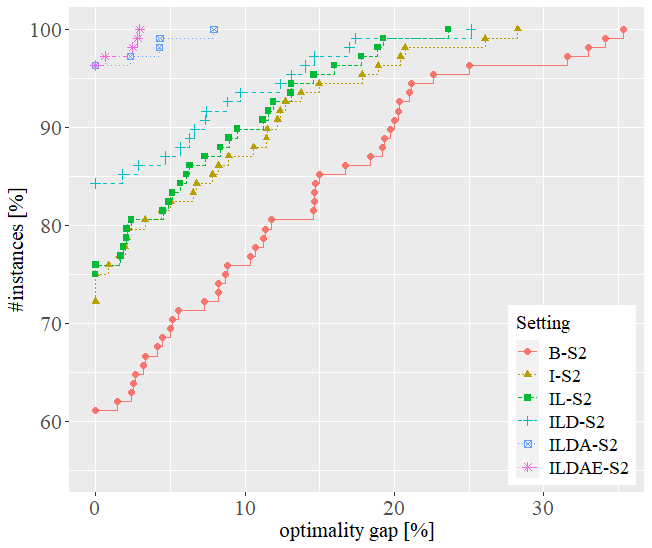}}
{\caption{WMCI S2 Results.}\label{fig:WMCI S2}}
\end{figure}

\begin{figure}[h!tb]
{\includegraphics[width=0.5\textwidth]{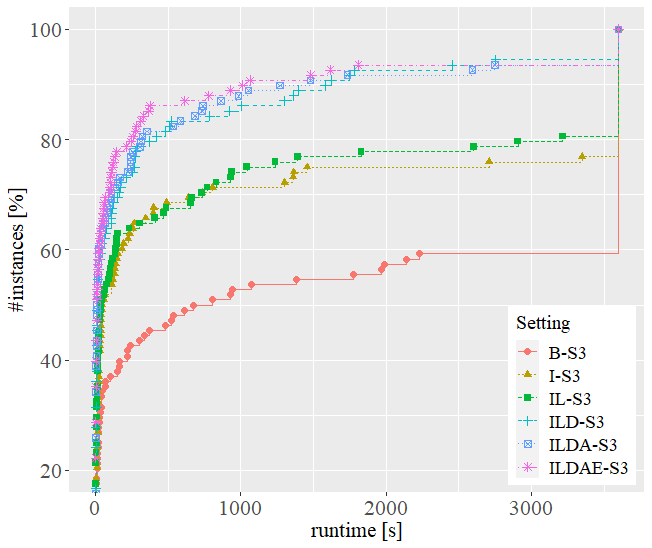}%
\includegraphics[width=0.5\textwidth]{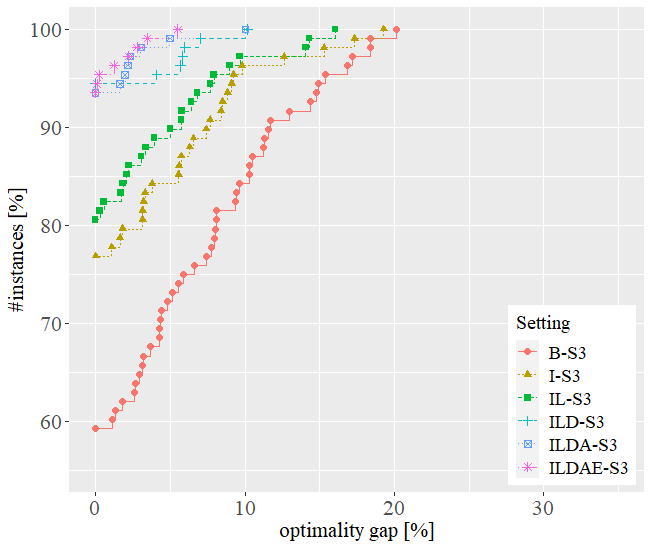}}
{\caption{WMCI S3 Results.}\label{fig:WMCI S3}}
\end{figure}

In Figure \ref{fig:WMCI S2}, we present the results for S2. Here, while 61\% of the instances are solved to optimality under the basic setting B-S2, the maximum optimality gap is 36\% which is large compared to B-S1. 
This value remains larger until the alternative cuts are included (setting ILDA-S2) which causes a substantial decrease in running time and final gap unlike option S1. In S2, the ground set for the follower problem is defined based on a rounding scheme. Thus, the enhanced separation procedure for alternative cuts given in Section \ref{section:Sep_lift_alt} is able to find eligible item pairs with $(x_{a_k}-x_{b_k})>0$ more easily, which explains the difference in the effect of component A under S1 and S2.
After the addition of the enhanced integer separation component (E), the solution times decrease more and the maximum optimality gap is reduced to 3\%. This result shows that a better search can be done when the time due to solving separation problems to optimality is saved.

The last option S3 whose results are plotted in Figure \ref{fig:WMCI S3}, yields a maximum gap of 20\% under the basic setting B-S3, which is notably smaller compared to S1 and S2, although the optimal solution ratio is similar to those of the previous options. On the other hand, I-S3, IL-S3, and ILD-S3 yield better running time and final gaps than their counterparts under S1 and S2. Since alternative cuts yield a slight performance improvement compared to S2, S3 falls barely behind of S2 in the complete setting ILDAE-S3, with a maximum gap of 5\%.

In Table \ref{table:WMCI}, the results of the complete (ILDAE) settings of all three fractional separation options are presented in terms of running time in seconds, final gaps, root gaps, the number of branch-and-cut tree nodes, and number of SICs generated. The first three measures are also compared to those obtained with the state-of-the-art MIBLP solver, using its default setting MIX{\tiny ++} \citep{fischetti2017new}. The MIBLP formulation of \WMCIG is provided in Appendix \ref{section:Appendix MCI model}, and the MIBLP solver is publicly available at \url{https://msinnl.github.io/pages/bilevel.html}. The numbers in the table show averages over three instances with the same parameter setting. We see that, while MIX{\tiny ++} is not able to solve any of the instances within the time limit of one hour and yields an average gap of $93.5\%$, this value is below $1\%$ with our settings. Even the minimum final gap obtained with MIX{\tiny ++}, which is not reported in the table, is 54\%. The difference between root gaps is also notable. The average root gap is almost 100\% with MIX{\tiny ++} as opposed to 25\% which is the average under S1, S2, and S3. When we focus only on our settings, we see that ILDAE-S2 is the best performing setting in terms of solution time, while it yields slightly larger root gaps than the others. The average tree size is considerably smaller under S3, and S2 requires the smallest number of cuts. The detailed results of all instances are given in Appendix \ref{section:Appendix results}, Tables \ref{table:Appendix-MCI-1}, \ref{table:Appendix-MCI-2}, and \ref{table:Appendix-MCI-3}.

\begin{landscape}
\begin{table}[h]
\caption{Results of \WMCIG experiments with the complete settings (ILDAE) of each separation option and with the benchmark MIBLP solver MIX\tiny++.}
\label{table:WMCI}
\fontsize{8pt}{11pt}\selectfont
{\centering
\begin{tabular}{l|rrrc|rrrr|rrrr|rrr|rrr}
  \midrule 
  & \multicolumn{4}{c|}{Time(sec.)} & \multicolumn{4}{c|}{Gap(\%)} & \multicolumn{4}{c|}{rGap(\%)} & \multicolumn{3}{c|}{\#Nodes} & \multicolumn{3}{c}{\#SIC}\\ $(n,B,k,r)$ & S1 & S2  &S3 &MIX\tiny++ & S1 & S2 &S3 &MIX\tiny++ & S1 & S2 &S3 &MIX\tiny++ & S1 & S2 &S3 & S1 & S2 &S3 \\ \midrule
    (50,5,5,1) & 0.5   & 0.1   & 2.8   & TL    & 0.0   & 0.0   & 0.0   & 66.7  & 23.3  & 14.3  & 17.3  & 99.7  & 91.3  & 53.0  & 50.7  & 161.7 & 173.7 & 208.3 \\
    (50,5,5,2) & 1.1   & 1.8   & 3.0   & TL    & 0.0   & 0.0   & 0.0   & 73.9  & 18.7  & 15.9  & 17.6  & 100.0 & 230.3 & 216.3 & 190.0 & 219.3 & 249.7 & 331.3 \\
    (50,5,5,3) & 4.0   & 3.2   & 3.0   & TL    & 0.0   & 0.0   & 0.0   & 84.3  & 14.3  & 15.0  & 15.6  & 99.6  & 309.7 & 376.0 & 311.0 & 151.3 & 174.3 & 188.7 \\
    (50,5,10,1) & 4.3   & 3.1   & 0.9   & TL    & 0.0   & 0.0   & 0.0   & 95.4  & 36.2  & 28.9  & 33.8  & 100.0 & 520.7 & 194.7 & 210.0 & 946.0 & 818.3 & 1148.7 \\
    (50,5,10,2) & 4.6   & 3.1   & 3.9   & TL    & 0.0   & 0.0   & 0.0   & 88.9  & 28.8  & 26.2  & 29.7  & 100.0 & 1506.3 & 730.0 & 933.0 & 1252.3 & 1149.3 & 1705.3 \\
    (50,5,10,3) & 4.7   & 1.9   & 2.5   & TL    & 0.0   & 0.0   & 0.0   & 80.1  & 22.9  & 27.1  & 23.4  & 100.0 & 266.0 & 351.0 & 203.3 & 293.3 & 361.7 & 482.0 \\
    (60,6,6,1) & 0.9   & 2.8   & 0.5   & TL    & 0.0   & 0.0   & 0.0   & 95.9  & 22.7  & 18.7  & 20.0  & 100.0 & 271.3 & 128.3 & 156.7 & 392.7 & 400.7 & 442.3 \\
    (60,6,6,2) & 5.8   & 32.8  & 12.7  & TL    & 0.0   & 0.0   & 0.0   & 89.1  & 19.6  & 18.7  & 21.5  & 100.0 & 1029.7 & 1254.7 & 953.7 & 645.0 & 785.3 & 869.7 \\
    (60,6,6,3) & 1.3   & 1.0   & 1.1   & TL    & 0.0   & 0.0   & 0.0   & 85.1  & 14.8  & 20.4  & 17.6  & 99.4  & 660.7 & 1101.7 & 1058.3 & 305.3 & 410.7 & 441.0 \\
    (60,6,12,1) & 1.6   & 1.8   & 1.9   & TL    & 0.0   & 0.0   & 0.0   & 97.0  & 33.7  & 29.1  & 25.4  & 100.0 & 553.7 & 168.0 & 188.3 & 1217.0 & 999.3 & 1121.0 \\
    (60,6,12,2) & 35.9  & 112.7 & 61.8  & TL    & 0.0   & 0.0   & 0.0   & 94.7  & 28.6  & 30.3  & 30.5  & 100.0 & 6615.7 & 6319.3 & 5817.0 & 3449.3 & 4701.7 & 6052.3 \\
    (60,6,12,3) & 4.6   & 8.3   & 4.0   & TL    & 0.0   & 0.0   & 0.0   & 86.8  & 23.6  & 30.0  & 25.9  & 100.0 & 1128.0 & 2254.0 & 957.7 & 874.7 & 1159.0 & 1540.0 \\
    (70,7,7,1) & 2.2   & 2.3   & 1.7   & TL    & 0.0   & 0.0   & 0.0   & 98.7  & 23.2  & 20.5  & 22.4  & 100.0 & 746.0 & 242.0 & 315.0 & 895.3 & 821.3 & 1043.0 \\
    (70,7,7,2) & 144.7 & 145.4 & 113.1 & TL    & 0.0   & 0.0   & 0.0   & 95.9  & 18.4  & 20.9  & 19.3  & 100.0 & 6815.0 & 10818.7 & 8437.3 & 1162.7 & 1131.7 & 1273.7 \\
    (70,7,7,3) & 1.1   & 2.8   & 1.6   & TL    & 0.0   & 0.0   & 0.0   & 85.6  & 15.3  & 21.0  & 17.6  & 100.0 & 911.3 & 2900.3 & 1828.3 & 425.7 & 596.0 & 495.3 \\
    (70,7,14,1) & 120.3 & 99.4  & 114.4 & TL    & 0.0   & 0.0   & 0.0   & 97.0  & 35.7  & 32.6  & 33.0  & 100.0 & 5688.3 & 1699.0 & 2403.3 & 9265.0 & 7121.0 & 9387.0 \\
    (70,7,14,2) & 820.5 & 1271.1 & 1323.3 & TL    & 0.0   & 0.2   & 0.4   & 99.6  & 28.2  & 32.0  & 30.6  & 100.0 & 41541.7 & 30312.7 & 24504.0 & 12161.0 & 10736.3 & 15653.7 \\
    (70,7,14,3) & 21.1  & 15.5  & 7.6   & TL    & 0.0   & 0.0   & 0.0   & 88.1  & 24.3  & 30.4  & 27.5  & 100.0 & 3059.0 & 2838.3 & 774.7 & 1605.0 & 1752.3 & 1928.0 \\
    (80,8,8,1) & 6.5   & 7.8   & 5.2   & TL    & 0.0   & 0.0   & 0.0   & 99.4  & 21.8  & 21.3  & 21.9  & 100.0 & 1067.0 & 354.0 & 636.7 & 1521.7 & 1110.7 & 1846.3 \\
    (80,8,8,2) & 189.6 & 152.4 & 154.2 & TL    & 0.0   & 0.0   & 0.0   & 99.8  & 17.9  & 19.8  & 20.6  & 100.0 & 13628.7 & 20936.3 & 11046.7 & 2165.7 & 2428.7 & 2662.0 \\
    (80,8,8,3) & 2.4   & 4.0   & 4.8   & TL    & 0.0   & 0.0   & 0.0   & 99.9  & 16.1  & 24.9  & 19.8  & 100.0 & 995.7 & 3898.3 & 2453.0 & 470.7 & 692.7 & 667.7 \\
    (80,8,16,1) & 186.4 & 45.9  & 84.7  & TL    & 0.0   & 0.0   & 0.0   & 99.7  & 34.2  & 36.7  & 35.3  & 100.0 & 9185.3 & 1261.7 & 2677.7 & 12947.3 & 6200.7 & 10748.3 \\
    (80,8,16,2) & 2116.9 & 1668.5 & 2939.0 & TL    & 1.6   & 1.0   & 1.0   & 99.6  & 27.7  & 30.1  & 29.0  & 100.0 & 56039.7 & 28043.3 & 32173.7 & 19053.7 & 17227.0 & 27288.7 \\
    (80,8,16,3) & 5.4   & 10.3  & 4.0   & TL    & 0.0   & 0.0   & 0.0   & 100.0 & 26.2  & 31.3  & 29.8  & 100.0 & 1213.0 & 2961.0 & 694.3 & 1090.3 & 1577.0 & 1229.0 \\
    (90,9,9,1) & 47.7  & 32.9  & 29.1  & TL    & 0.0   & 0.0   & 0.0   & 97.4  & 19.9  & 21.9  & 21.3  & 100.0 & 4124.3 & 867.3 & 1414.3 & 3910.0 & 3220.7 & 3393.3 \\
    (90,9,9,2) & 641.0 & 1110.9 & 1301.0 & TL    & 0.0   & 0.0   & 0.0   & 95.1  & 17.6  & 18.6  & 18.6  & 100.0 & 42742.0 & 61352.0 & 33670.0 & 4936.3 & 4836.0 & 5721.0 \\
    (90,9,9,3) & 5.1   & 6.8   & 2.3   & TL    & 0.0   & 0.0   & 0.0   & 83.0  & 18.2  & 23.8  & 20.5  & 100.0 & 373.7 & 5118.7 & 914.3 & 651.7 & 908.7 & 901.0 \\
    (90,9,18,1) & 1369.6 & 153.7 & 529.6 & TL    & 0.0   & 0.0   & 0.0   & 99.3  & 35.3  & 34.8  & 35.3  & 100.0 & 28151.0 & 2666.0 & 5221.7 & 30952.3 & 10967.0 & 23276.3 \\
    (90,9,18,2) & 2750.8 & 1555.2 & 1892.3 & TL    & 2.8   & 0.9   & 1.2   & 99.6  & 25.5  & 30.1  & 30.0  & 100.0 & 50290.7 & 26917.0 & 16770.3 & 21869.7 & 16837.3 & 24431.7 \\
    (90,9,18,3) & 24.2  & 45.1  & 19.6  & TL    & 0.0   & 0.0   & 0.0   & 99.9  & 26.4  & 32.0  & 29.4  & 100.0 & 1668.0 & 9765.7 & 1300.7 & 1485.3 & 2788.0 & 2380.0 \\
    (100,10,10,1) & 174.3 & 37.8  & 49.6  & TL    & 0.0   & 0.0   & 0.0   & 98.7  & 22.6  & 23.5  & 23.2  & 100.0 & 8819.3 & 1452.0 & 2357.3 & 9102.3 & 3957.0 & 5558.3 \\
    (100,10,10,2) & 393.1 & 399.6 & 248.8 & TL    & 0.0   & 0.0   & 0.0   & 97.3  & 17.4  & 21.0  & 19.8  & 100.0 & 21016.7 & 33557.3 & 16961.0 & 4808.7 & 4627.7 & 6257.3 \\
    (100,10,10,3) & 9.8   & 22.5  & 12.5  & TL    & 0.0   & 0.0   & 0.0   & 94.5  & 18.3  & 26.4  & 21.9  & 100.0 & 909.0 & 10693.7 & 3846.3 & 1018.3 & 1462.3 & 1345.3 \\
    (100,10,20,1) & TL    & 2519.7 & TL    & TL    & 7.8   & 0.8   & 2.6   & 99.6  & 34.3  & 35.7  & 37.2  & 100.0 & 18500.7 & 7836.0 & 10143.7 & 54296.7 & 37962.0 & 51913.7 \\
    (100,10,20,2) & 1648.9 & 499.2 & 389.8 & TL    & 1.4   & 0.0   & 0.0   & 100.0 & 26.9  & 31.5  & 29.6  & 100.0 & 28282.7 & 14168.3 & 3974.3 & 12621.7 & 6238.0 & 7334.3 \\
    (100,10,20,3) & 131.9 & 467.0 & 68.2  & TL    & 0.0   & 0.0   & 0.0   & 99.6  & 29.1  & 35.4  & 31.5  & 99.8  & 8242.7 & 34327.0 & 3764.7 & 3279.7 & 5546.0 & 4851.0 \\
    \midrule
    Average & 402.3 & 290.2 & 361.0 & TL    & 0.4   & 0.1   & 0.1   & 93.5  & 24.1  & 25.9  & 25.1  & 100.0 & 10199.9 & 9114.8 & 5536.5 & 6155.7 & 4503.6 & 6281.0 \\
    \midrule
\end{tabular}}
{\\ \footnotesize The results are aggregated over the three instances with the same $n$, $B$, $k$, and $r$ values, and given as averages. TL indicates that the time limit of 3600 seconds is reached for all instances involved in the average.}
\end{table}
\end{landscape}

\subsection{Bipartite Inference Interdiction Game}

While generating the \BIIG instances, we adopt the parameter settings used in \cite{salvagnin2019some} for the bipartite inference problem which constitutes the lower level of \BIIG. We do not include the parameter values that lead to failing to solve the problem within one hour according to their results, as we have an additional problem layer. As a result, the instances are generated as follows. The activating probability $p_i$ is sampled uniformly in $[0,1]$ for each $i \in N$. For the density $d$ of the graphs, i.e., the probability of having an arc between each $(i,j)$ pair, in addition to 0.07 which is the only value used in \cite{salvagnin2019some}, two more values $\{0.1,0.15\}$ are determined, and the arcs are generated in a completely random manner. The number of items $n\in\{20,50,100\}$, the number of targets $m\in \{2n,5n,10n\}$, and number of items $B$ that the follower can choose is in $\{5,10\}$ for $n=20$, $\{10,20\}$ for $n=50$, and equal to 10 for $n=100$. The leader can interdict $k=5$ items if $n=20$ and 10 items if $n>20$. Five distinct instances are generated for each parameter setting. 

The results of the experiments in terms of running time and final gaps are plotted in Figures \ref{fig:BII S1}, \ref{fig:BII S2} and \ref{fig:BII S3}. Common to all three separation options, the improved cuts (I) make the largest contribution for both measures. I, IL, and ILD settings perform very similarly. In the detailed results that are not reported here, we see that the number of lifted cuts and dominance inequalities added are very small which leads to different branch-and-cut trees but not notable improvement in performance. We attribute this situation to the rareness of item pairs suitable to be used in these cuts, due to the structure of the instances, i.e., it is difficult to have that one item covers all the targets that another item covers and has a larger activation probability as described in Remark \ref{rem:liftSpecial} and Proposition \ref{prop:dominanceBipartite}.
Under S1 (see Figure \ref{fig:BII S1}), the alternative cuts cause slightly smaller solution times and maximum gap reduced from 40\% to 30\%. Including component H further decreases this number to 20\%.

\begin{figure}[h!tb]
{\includegraphics[width=0.5\textwidth]{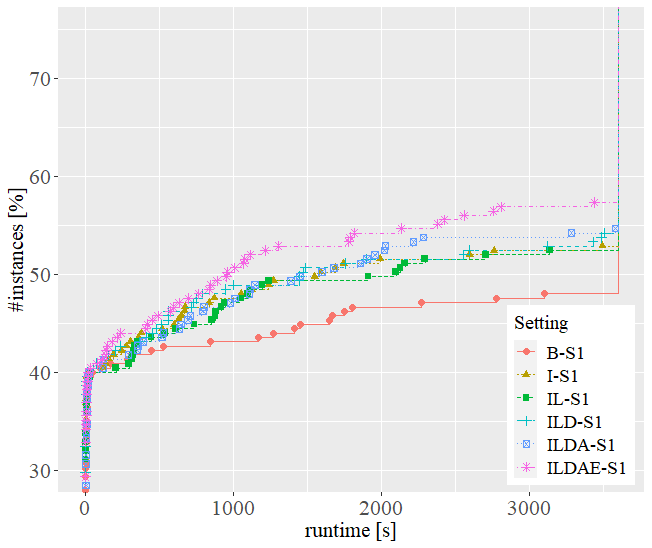}%
\includegraphics[width=0.5\textwidth]{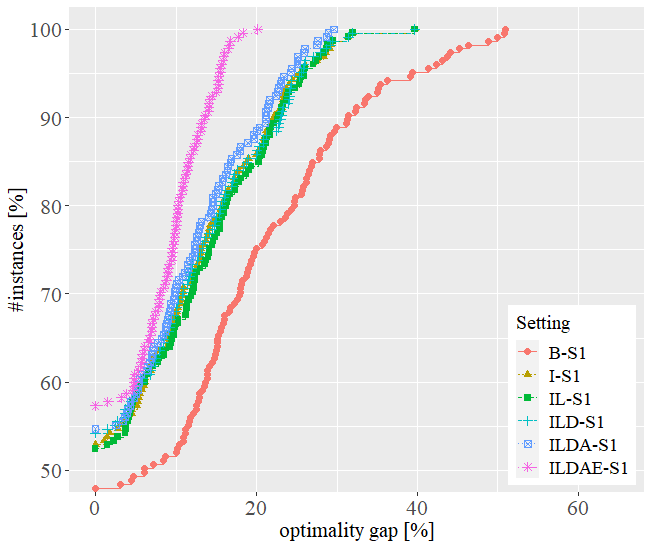}}
{\caption{\BIIG S1 Results.}\label{fig:BII S1}}
\end{figure}

\begin{figure}[h!tb]
{\includegraphics[width=0.5\textwidth]{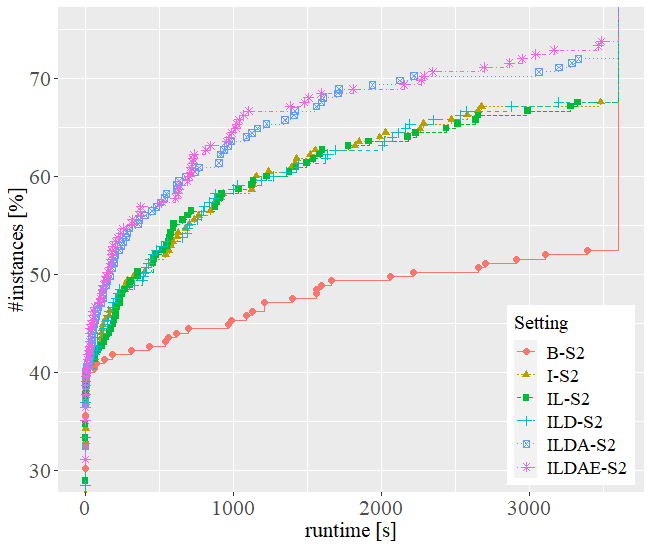}%
\includegraphics[width=0.5\textwidth]{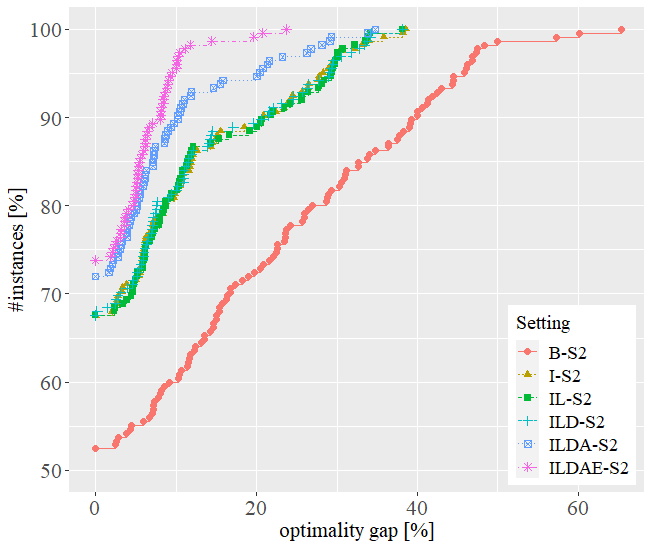}}
{\caption{\BIIG S2 Results.}\label{fig:BII S2}}
\end{figure}

\begin{figure}[h!tb]
{\includegraphics[width=0.5\textwidth]{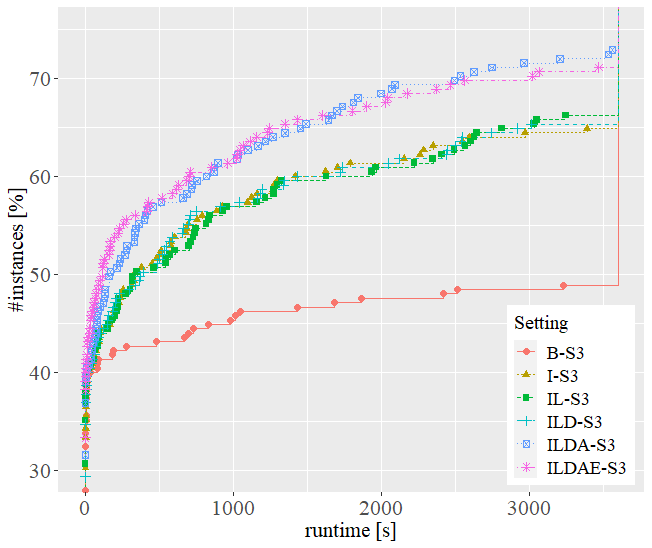}%
\includegraphics[width=0.5\textwidth]{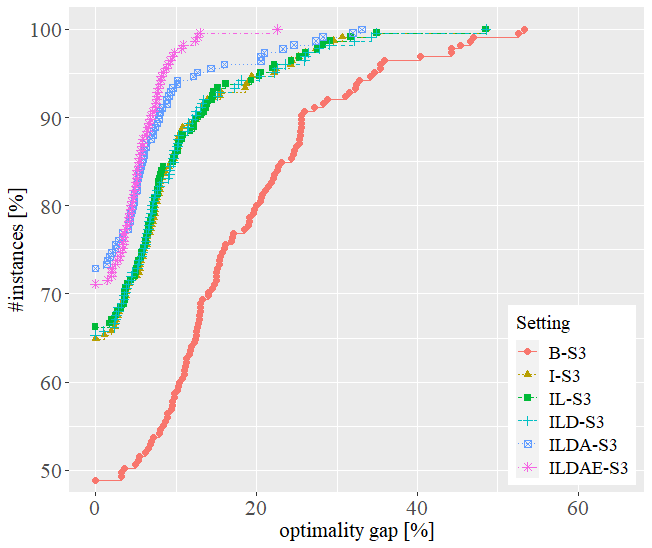}}
{\caption{\BIIG S3 Results.}\label{fig:BII S3}}
\end{figure}

The plots for S2 are shown in Figures \ref{fig:BII S2}. As is the case with \WMCIG instances, B-S1 setting yields very large final optimality gaps. The increase in the number of instances solved to optimality due to the improved SICs is larger compared to S1, as can be seen from both plots. Alternative cuts have a larger contribution than they have under S1. With all the components (ILDAE-S2), the maximum optimality gap is 24\% and the 74\% of instances are solved to optimality within the timelimit. 
The performance of S3 is similar to that of S1, except the maximum gap under the basic setting which is better in the former. 

Next, in Table \ref{table:BII} we present the average results for the ILDAE setting under all three options, with a similar structure as used in Table \ref{table:WMCI}, except the MIX{\tiny++} columns, since \BIIG does not fit into the MIX{\tiny++} setting, due to not having a compact MIBLP formulation. Each number denotes the average over five instances. In terms of running time and final gaps, while S2 and S3 perform similarly, S1 falls behind. S3 outperforms the others in terms of root gaps. As before, the tree size is smallest under S3. Since the number of instances solved to optimality is larger with this setting, it is understood that with S3 the optimal solution is reached in a smaller number of subproblems. Finally, the number of SICs generated is smaller when using S2. The detailed results for two instances from each class shown in Table \ref{table:BII} are provided in Appendix \ref{section:Appendix results}, Tables \ref{table:Appendix-BII-1}, \ref{table:Appendix-BII-2}, and \ref{table:Appendix-BII-3}.

\begin{adjustwidth}{+1cm}{-1cm}

\begin{table}[h!]
\caption{Results of \BIIG experiments with the complete settings (ILDAE) of each separation option. }
\label{table:BII}
{\fontsize{8pt}{10pt}\selectfont
\centering
\begin{tabular}{l|rrr|rrr|rrr|rrr|rrr}
  \midrule
  & \multicolumn{3}{c|}{Time(sec.)} & \multicolumn{3}{c|}{Gap(\%)} & \multicolumn{3}{c|}{rGap(\%)} & \multicolumn{3}{c|}{\#Nodes} & \multicolumn{3}{c}{\#SIC}\\ 
 $(n,m,B,k,d)$ & S1 & S2  &S3 & S1 & S2 &S3 & S1 & S2 &S3 & S1 & S2 &S3 & S1 & S2 &S3 \\ \midrule
    (20,40,5,5,0.07) & 0.8   & 0.1   & 0.1   & 0.0   & 0.0   & 0.0   & 40.8  & 25.8  & 25.0  & 151.6 & 63.4  & 71.0  & 185.0 & 208.0 & 253.4 \\
    (20,40,5,5,0.10) & 0.1   & 0.1   & 0.2   & 0.0   & 0.0   & 0.0   & 38.2  & 22.9  & 23.1  & 92.8  & 49.8  & 42.8  & 142.0 & 151.8 & 177.2 \\
    (20,40,5,5,0.15) & 1.2   & 0.3   & 0.5   & 0.0   & 0.0   & 0.0   & 37.7  & 27.0  & 24.1  & 139.2 & 56.6  & 50.0  & 194.4 & 202.8 & 252.2 \\
    (20,40,10,5,0.07) & 0.6   & 0.3   & 0.1   & 0.0   & 0.0   & 0.0   & 26.4  & 25.0  & 10.2  & 101.2 & 38.2  & 12.8  & 108.0 & 135.4 & 77.8 \\
    (20,40,10,5,0.10) & 2.1   & 0.1   & 0.4   & 0.0   & 0.0   & 0.0   & 27.4  & 26.2  & 16.5  & 174.0 & 63.6  & 28.4  & 182.2 & 194.0 & 135.4 \\
    (20,40,10,5,0.15) & 1.2   & 0.1   & 0.2   & 0.0   & 0.0   & 0.0   & 24.2  & 25.5  & 21.4  & 302.2 & 74.4  & 46.8  & 378.0 & 252.0 & 252.8 \\
    (20,100,5,5,0.07) & 1.8   & 0.2   & 1.0   & 0.0   & 0.0   & 0.0   & 37.5  & 23.0  & 23.0  & 93.4  & 32.8  & 43.8  & 152.6 & 144.2 & 185.2 \\
    (20,100,5,5,0.10) & 2.0   & 1.7   & 0.2   & 0.0   & 0.0   & 0.0   & 39.9  & 27.7  & 25.0  & 144.0 & 57.6  & 63.2  & 192.6 & 221.2 & 253.8 \\
    (20,100,5,5,0.15) & 3.2   & 0.6   & 0.2   & 0.0   & 0.0   & 0.0   & 34.8  & 24.4  & 25.8  & 119.4 & 39.4  & 45.8  & 202.0 & 203.8 & 275.8 \\
    (20,100,10,5,0.07) & 3.3   & 3.1   & 1.5   & 0.0   & 0.0   & 0.0   & 27.3  & 26.6  & 16.8  & 278.0 & 78.0  & 25.6  & 289.8 & 283.2 & 145.4 \\
    (20,100,10,5,0.10) & 10.2  & 1.7   & 0.7   & 0.0   & 0.0   & 0.0   & 28.1  & 27.3  & 20.7  & 620.4 & 168.0 & 93.6  & 667.0 & 589.4 & 500.0 \\
    (20,100,10,5,0.15) & 8.8   & 0.7   & 0.3   & 0.0   & 0.0   & 0.0   & 28.6  & 29.4  & 24.3  & 497.8 & 140.2 & 71.0  & 582.2 & 471.4 & 432.4 \\
    (20,200,5,5,0.07) & 0.2   & 0.2   & 0.2   & 0.0   & 0.0   & 0.0   & 38.6  & 23.2  & 23.2  & 116.4 & 36.6  & 45.0  & 167.4 & 143.0 & 172.4 \\
    (20,200,5,5,0.10) & 2.4   & 1.0   & 0.2   & 0.0   & 0.0   & 0.0   & 39.8  & 24.1  & 22.9  & 122.2 & 57.4  & 61.6  & 200.6 & 221.8 & 272.4 \\
    (20,200,5,5,0.15) & 0.6   & 0.4   & 1.1   & 0.0   & 0.0   & 0.0   & 40.4  & 27.6  & 27.9  & 174.4 & 55.6  & 67.6  & 287.0 & 279.6 & 340.4 \\
    (20,200,10,5,0.07) & 1.0   & 0.2   & 0.4   & 0.0   & 0.0   & 0.0   & 25.8  & 24.2  & 15.5  & 273.4 & 67.8  & 32.8  & 295.2 & 252.0 & 179.0 \\
    (20,200,10,5,0.10) & 6.0   & 1.8   & 0.8   & 0.0   & 0.0   & 0.0   & 29.6  & 29.0  & 20.8  & 642.6 & 169.4 & 116.0 & 852.6 & 615.2 & 586.4 \\
    (20,200,10,5,0.15) & 3.6   & 3.1   & 2.0   & 0.0   & 0.0   & 0.0   & 29.0  & 28.6  & 27.1  & 1242.4 & 361.2 & 206.6 & 1509.4 & 1062.8 & 1127.6 \\
    (50,100,10,10,0.07) & 742.6 & 71.1  & 54.5  & 0.0   & 0.0   & 0.0   & 39.8  & 33.6  & 33.7  & 13243.2 & 1066.4 & 1100.6 & 19297.2 & 5950.6 & 7381.0 \\
    (50,100,10,10,0.10) & 495.0 & 70.3  & 44.7  & 0.0   & 0.0   & 0.0   & 40.4  & 35.5  & 33.4  & 10887.0 & 1135.0 & 1079.6 & 19737.0 & 6451.4 & 7945.4 \\
    (50,100,10,10,0.15) & 1251.0 & 291.8 & 291.5 & 0.0   & 0.0   & 0.0   & 38.1  & 36.9  & 36.2  & 17118.2 & 3012.8 & 2671.4 & 30409.2 & 13284.6 & 17265.2 \\
    (50,100,20,10,0.07) & 3094.1 & 167.3 & 74.0  & 3.7   & 0.0   & 0.0   & 25.4  & 27.4  & 24.8  & 30110.6 & 2761.2 & 980.4 & 44845.6 & 14129.6 & 7556.6 \\
    (50,100,20,10,0.10) & TL    & 3426.5 & 3493.3 & 12.6  & 2.2   & 3.0   & 29.1  & 27.7  & 28.0  & 17823.4 & 13535.0 & 5847.4 & 43050.2 & 60934.2 & 50894.8 \\
    (50,100,20,10,0.15) & TL    & 3176.4 & 3129.2 & 11.9  & 6.2   & 5.1   & 28.2  & 29.2  & 28.2  & 13805.8 & 10767.4 & 5591.4 & 36628.6 & 46254.8 & 48118.4 \\
    (50,250,10,10,0.07) & 1254.7 & 214.3 & 195.4 & 1.1   & 0.0   & 0.0   & 38.9  & 34.1  & 34.3  & 17169.8 & 1778.8 & 1816.0 & 33312.2 & 11623.2 & 14706.6 \\
    (50,250,10,10,0.10) & 1476.3 & 344.1 & 444.3 & 0.7   & 0.0   & 0.0   & 38.6  & 35.8  & 36.0  & 20454.2 & 3021.6 & 2607.0 & 38334.8 & 17439.0 & 22967.8 \\
    (50,250,10,10,0.15) & 2481.5 & 596.7 & 571.5 & 1.0   & 0.0   & 0.0   & 40.4  & 38.8  & 38.2  & 27197.2 & 4770.8 & 3521.4 & 52997.4 & 25344.8 & 31984.4 \\
    (50,250,20,10,0.07) & TL    & 2832.2 & 1663.8 & 10.5  & 3.0   & 1.5   & 29.3  & 28.7  & 27.9  & 24945.8 & 7078.4 & 3236.4 & 54231.0 & 44474.8 & 31120.0 \\
    (50,250,20,10,0.10) & TL    & TL    & TL    & 14.6  & 7.1   & 6.2   & 31.1  & 29.4  & 29.9  & 23904.0 & 7229.2 & 4432.6 & 55816.2 & 53267.2 & 53847.2 \\
    (50,250,20,10,0.15) & TL    & TL    & TL    & 13.2  & 7.2   & 7.8   & 29.4  & 29.3  & 28.3  & 17563.8 & 10147.0 & 4457.4 & 52104.4 & 54222.2 & 52020.0 \\
    (50,500,10,10,0.07) & 1394.1 & 239.9 & 244.2 & 1.2   & 0.0   & 0.0   & 41.2  & 36.3  & 34.7  & 18172.4 & 1909.6 & 2095.6 & 36983.0 & 12595.4 & 17501.2 \\
    (50,500,10,10,0.10) & 2286.9 & 1470.1 & 1452.6 & 4.8   & 0.7   & 0.7   & 40.2  & 36.8  & 37.4  & 18616.2 & 4365.2 & 3773.4 & 48691.2 & 30415.6 & 38090.8 \\
    (50,500,10,10,0.15) & 2457.4 & 616.0 & 1276.7 & 3.0   & 0.0   & 0.0   & 38.9  & 38.9  & 38.0  & 19249.4 & 4608.0 & 4069.2 & 46038.0 & 26010.8 & 37769.0 \\
    (50,500,20,10,0.07) & TL    & 2470.5 & 2577.9 & 11.9  & 3.1   & 3.6   & 28.7  & 28.3  & 27.6  & 21951.0 & 5431.0 & 3118.6 & 51392.4 & 41838.0 & 37029.0 \\
    (50,500,20,10,0.10) & TL    & TL    & TL    & 16.6  & 8.8   & 7.9   & 30.5  & 30.1  & 29.7  & 16875.2 & 6230.8 & 4408.8 & 47643.2 & 50604.0 & 57941.8 \\
    (50,500,20,10,0.15) & TL    & TL    & TL    & 14.9  & 11.7  & 12.2  & 30.1  & 29.8  & 30.1  & 14172.6 & 6352.2 & 3858.2 & 48670.0 & 41906.6 & 46339.2 \\
    (100,200,10,10,0.07) & 3095.6 & 906.7 & 1607.6 & 4.8   & 0.0   & 0.7   & 31.3  & 30.2  & 30.5  & 21226.4 & 6033.8 & 6315.4 & 58458.4 & 24640.2 & 38734.6 \\
    (100,200,10,10,0.10) & 3431.6 & 1880.5 & 1824.4 & 6.7   & 1.1   & 1.2   & 29.7  & 31.5  & 31.5  & 20834.4 & 8998.0 & 7423.8 & 53874.6 & 31265.0 & 43205.0 \\
    (100,200,10,10,0.15) & TL    & 2906.7 & TL    & 9.7   & 4.7   & 3.6   & 28.4  & 32.5  & 31.4  & 10258.2 & 8524.6 & 6958.6 & 36347.2 & 24304.2 & 49364.6 \\
    (100,500,10,10,0.07) & TL    & 2218.3 & 2543.1 & 8.2   & 1.1   & 1.7   & 31.9  & 31.6  & 31.5  & 15180.4 & 6239.4 & 5944.8 & 63577.6 & 41637.6 & 54433.2 \\
    (100,500,10,10,0.10) & TL    & 3349.6 & TL    & 9.6   & 3.8   & 4.3   & 30.3  & 32.3  & 31.8  & 14783.8 & 7862.6 & 6286.6 & 60239.8 & 49257.4 & 67210.4 \\
    (100,500,10,10,0.15) & TL    & 3577.4 & TL    & 12.7  & 9.5   & 6.2   & 29.0  & 34.2  & 32.4  & 6514.2 & 6679.8 & 4145.4 & 31428.2 & 26154.8 & 46963.4 \\
    (100,1000,10,10,0.07) & 3568.3 & 2936.3 & 3315.2 & 7.0   & 3.0   & 3.6   & 31.4  & 33.3  & 31.9  & 15108.0 & 4373.4 & 5407.8 & 68691.6 & 42208.6 & 63986.0 \\
    (100,1000,10,10,0.10) & TL    & 3052.2 & 3484.5 & 8.6   & 2.7   & 4.2   & 31.4  & 34.6  & 33.5  & 13707.4 & 6304.0 & 5070.0 & 66499.8 & 48547.6 & 62304.0 \\
    (100,1000,10,10,0.15) & TL    & TL    & TL    & 12.5  & 9.6   & 8.3   & 29.7  & 36.3  & 33.3  & 6278.2 & 6410.6 & 3571.6 & 34473.2 & 33760.2 & 43062.8 \\ \midrule
    Average & 1721.7 & 1218.5 & 1268.9 & 4.5   & 1.9   & 1.8   & 33.0  & 30.0  & 27.9  & 10498.6 & 3516.4 & 2464.8 & 27563.6 & 19647.9 & 23452.5 \\
     \midrule
\end{tabular} }
{\footnotesize The results are aggregated over the five instances with the same $n$, $m$, $B$, $k$, and $d$ values, and given as averages. TL indicates that the time limit of 3600 seconds is reached for all instances involved in the average.}
\end{table}

\end{adjustwidth}

\section{Conclusion}
\label{section:Conclusion}

In this paper, we have presented an exact method to solve interdiction games with a submodular and non-decreasing objective function. Such problems have many real world applications as described in Section \ref{section:ProblemDefinition}. We introduce submodular interdiction cuts (SIC) by exploiting the special properties of submodular set functions. We also develop improved and lifted variants of these SIC.
The branch-and-cut framework which we design based on SICs involves several other components such as dominance inequalities, greedy algorithms for separation of fractional solutions and an enhanced separation procedure for integer solutions. 
We also investigate the impact of using maximal sets while building SICs instead of non-maximal ones, and utilize the obtained information to design better separation schemes. 

To assess the performance of our solution algorithm and its individual components, we conduct a computational study on the weighted maximal covering interdiction game and the bipartite inference interdiction game. The results show that the components of our framework provide significant improvements with respect to the basic version. Moreover, our method vastly outperforms a state-of-the-art general purpose mixed-integer bilevel linear programming (MIBLP) solver for the weighted maximal covering interdiction game (for which a MIBLP formulation is possible). 

Regarding further work, a natural extension of interdiction games is the \emph{fortification problem} where a third problem layer includes the interdiction game as a constraint. There are several studies addressing defender-attacker-defender games such as \cite{cappanera2011optimal}, \cite{lozano2017backward}, \cite{lozano2017solving}, and \cite{zheng2018exact}. It could be interesting to study such games with submodular objective function.	Another possible future research direction could be developing methods for the solution of stochastic or robust submodular interdiction games. Finally, one could also focus on concrete submodular interdiction games and try to extend our general-purpose framework with problem-specific components.

\section*{Acknowledgments}
{
	The research was supported by the Linz Institute of Technology (Project LIT-2019-7-YOU-211) and the JKU Business School. LIT is funded by the state of Upper Austria.
}

\bibliographystyle{informs2014}
\bibliography{Submodular}

\begin{thebibliography}{}

\bibitem[Ahmed and Atamt{\"u}rk, 2011]{ahmed2011maximizing}
Ahmed, S. and Atamt{\"u}rk, A. (2011).
\newblock Maximizing a class of submodular utility functions.
\newblock {\em Mathematical Programming}, 128(1-2):149--169.

\bibitem[Aksen et~al., 2014]{aksen2014bilevel}
Aksen, D., Akca, S.~{\c{S}}., and Aras, N. (2014).
\newblock A bilevel partial interdiction problem with capacitated facilities
  and demand outsourcing.
\newblock {\em Computers \& Operations Research}, 41:346--358.

\bibitem[Aksen et~al., 2010]{aksen2010budget}
Aksen, D., Piyade, N., and Aras, N. (2010).
\newblock The budget constrained r-interdiction median problem with capacity
  expansion.
\newblock {\em Central European Journal of Operations Research},
  18(3):269--291.

\bibitem[Alon et~al., 2012]{alon2012optimizing}
Alon, N., Gamzu, I., and Tennenholtz, M. (2012).
\newblock Optimizing budget allocation among channels and influencers.
\newblock In {\em Proceedings of the 21st International Conference on World
  Wide Web}, pages 381--388.

\bibitem[Bayrak and Bailey, 2008]{bayrak2008shortest}
Bayrak, H. and Bailey, M.~D. (2008).
\newblock Shortest path network interdiction with asymmetric information.
\newblock {\em Networks: An International Journal}, 52(3):133--140.

\bibitem[Ben-Ayed and Blair, 1990]{ben1990computational}
Ben-Ayed, O. and Blair, C.~E. (1990).
\newblock Computational difficulties of bilevel linear programming.
\newblock {\em Operations Research}, 38(3):556--560.

\bibitem[Borrero and Lozano, 2021]{borrero2021modeling}
Borrero, J.~S. and Lozano, L. (2021).
\newblock Modeling defender-attacker problems as robust linear programs with
  mixed-integer uncertainty sets.
\newblock {\em INFORMS Journal on Computing}.

\bibitem[Brotcorne et~al., 2013]{brotcorne2013one}
Brotcorne, L., Hanafi, S., and Mansi, R. (2013).
\newblock One-level reformulation of the bilevel knapsack problem using dynamic
  programming.
\newblock {\em Discrete Optimization}, 10(1):1--10.

\bibitem[Brown et~al., 2006]{brown2006defending}
Brown, G., Carlyle, M., Salmer{\'o}n, J., and Wood, K. (2006).
\newblock Defending critical infrastructure.
\newblock {\em Interfaces}, 36(6):530--544.

\bibitem[Cappanera and Scaparra, 2011]{cappanera2011optimal}
Cappanera, P. and Scaparra, M.~P. (2011).
\newblock Optimal allocation of protective resources in shortest-path networks.
\newblock {\em Transportation Science}, 45(1):64--80.

\bibitem[Caprara et~al., 2016]{caprara2016bilevel}
Caprara, A., Carvalho, M., Lodi, A., and Woeginger, G.~J. (2016).
\newblock Bilevel knapsack with interdiction constraints.
\newblock {\em INFORMS Journal on Computing}, 28(2):319--333.

\bibitem[Church and ReVelle, 1974]{church1974maximal}
Church, R. and ReVelle, C. (1974).
\newblock The maximal covering location problem.
\newblock In {\em Papers of the regional science association}, volume~32, pages
  101--118. Springer-Verlag.

\bibitem[Church et~al., 2004]{church2004identifying}
Church, R.~L., Scaparra, M.~P., and Middleton, R.~S. (2004).
\newblock Identifying critical infrastructure: the median and covering facility
  interdiction problems.
\newblock {\em Annals of the Association of American Geographers},
  94(3):491--502.

\bibitem[Cochran et~al., 2011]{wood2010bilevel}
Cochran, J.~J., Cox, L.~A., Keskinocak, P., Kharoufeh, J.~P., Smith, J.~C., and
  Wood, R.~K. (2011).
\newblock Bilevel network interdiction models: Formulations and solutions.
\newblock {\em Wiley Encyclopedia of Operations Research and Management
  Science.}

\bibitem[Cormican et~al., 1998]{cormican1998stochastic}
Cormican, K.~J., Morton, D.~P., and Wood, R.~K. (1998).
\newblock Stochastic network interdiction.
\newblock {\em Operations Research}, 46(2):184--197.

\bibitem[Cornuejols et~al., 1977]{cornuejols1977location}
Cornuejols, G., Fisher, M., and Nemhauser, G. (1977).
\newblock Location of bank accounts to optimize float: An analytic study of
  exact and approximate algorithms.
\newblock {\em Management Science}, 23:789--810.

\bibitem[Della~Croce and Scatamacchia, 2020]{della2020exact}
Della~Croce, F. and Scatamacchia, R. (2020).
\newblock An exact approach for the bilevel knapsack problem with interdiction
  constraints and extensions.
\newblock {\em Mathematical Programming}, 183(1):249--281.

\bibitem[Dempe and Zemkoho, 2020]{dempe2020bilevel}
Dempe, S. and Zemkoho, A. (2020).
\newblock {\em Bilevel optimization}.
\newblock Springer.

\bibitem[DeNegre, 2011]{denegre2011interdiction}
DeNegre, S. (2011).
\newblock {\em Interdiction and discrete bilevel linear programming}.
\newblock Lehigh University PhD thesis.

\bibitem[Dong et~al., 2010]{dong2010model}
Dong, L., Xu-chen, L., Xiang-tao, Y., and Fei, W. (2010).
\newblock A model for allocating protection resources in military logistics
  distribution system based on maximal covering problem.
\newblock In {\em 2010 International Conference on Logistics Systems and
  Intelligent Management (ICLSIM)}, volume~1, pages 98--101. IEEE.

\bibitem[Fischetti et~al., 2017]{fischetti2017new}
Fischetti, M., Ljubi{\'c}, I., Monaci, M., and Sinnl, M. (2017).
\newblock A new general-purpose algorithm for mixed-integer bilevel linear
  programs.
\newblock {\em Operations Research}, 65(6):1615--1637.

\bibitem[Fischetti et~al., 2019]{fischetti2019interdiction}
Fischetti, M., Ljubi{\'c}, I., Monaci, M., and Sinnl, M. (2019).
\newblock Interdiction games and monotonicity, with application to knapsack
  problems.
\newblock {\em INFORMS Journal on Computing}, 31(2):390--410.

\bibitem[Furini et~al., 2019]{furini2019maximum}
Furini, F., Ljubi{\'c}, I., Martin, S., and San~Segundo, P. (2019).
\newblock The maximum clique interdiction problem.
\newblock {\em European Journal of Operational Research}, 277(1):112--127.

\bibitem[Furini et~al., 2021]{furini2021branch}
Furini, F., Ljubi{\'c}, I., San~Segundo, P., and Zhao, Y. (2021).
\newblock A branch-and-cut algorithm for the edge interdiction clique problem.
\newblock {\em European Journal of Operational Research}.

\bibitem[Golden, 1978]{golden1978problem}
Golden, B. (1978).
\newblock A problem in network interdiction.
\newblock {\em Naval Research Logistics Quarterly}, 25(4):711--713.

\bibitem[Israeli and Wood, 2002]{israeli2002shortest}
Israeli, E. and Wood, R.~K. (2002).
\newblock Shortest-path network interdiction.
\newblock {\em Networks: An International Journal}, 40(2):97--111.

\bibitem[Jeroslow, 1985]{jeroslow1985polynomial}
Jeroslow, R.~G. (1985).
\newblock The polynomial hierarchy and a simple model for competitive analysis.
\newblock {\em Mathematical Programming}, 32(2):146--164.

\bibitem[Kempe et~al., 2003]{kempe2003maximizing}
Kempe, D., Kleinberg, J., and Tardos, {\'E}. (2003).
\newblock Maximizing the spread of influence through a social network.
\newblock In {\em Proceedings of the Ninth ACM SIGKDD International Conference
  on Knowledge Discovery and Data Mining}, pages 137--146.

\bibitem[Krause et~al., 2008]{krause2008robust}
Krause, A., McMahan, H.~B., Guestrin, C., and Gupta, A. (2008).
\newblock Robust submodular observation selection.
\newblock {\em Journal of Machine Learning Research}, 9(12).

\bibitem[Kunnumkal and Mart{\'\i}nez-de Alb{\'e}niz,
  2019]{kunnumkal2019tractable}
Kunnumkal, S. and Mart{\'\i}nez-de Alb{\'e}niz, V. (2019).
\newblock Tractable approximations for assortment planning with product costs.
\newblock {\em Operations Research}, 67(2):436--452.

\bibitem[Laporte et~al., 2015]{LaporteEtal2015}
Laporte, G., Nickel, S., and Saldanha~da Gama, F., editors (2015).
\newblock {\em Location science}.
\newblock Springer, 1st edition.

\bibitem[Ljubi{\'c} and Moreno, 2018]{ljubic2018outer}
Ljubi{\'c}, I. and Moreno, E. (2018).
\newblock Outer approximation and submodular cuts for maximum capture facility
  location problems with random utilities.
\newblock {\em European Journal of Operational Research}, 266(1):46--56.

\bibitem[Lozano and Smith, 2017a]{lozano2017backward}
Lozano, L. and Smith, J.~C. (2017a).
\newblock A backward sampling framework for interdiction problems with
  fortification.
\newblock {\em INFORMS Journal on Computing}, 29(1):123--139.

\bibitem[Lozano and Smith, 2017b]{lozano2017value}
Lozano, L. and Smith, J.~C. (2017b).
\newblock A value-function-based exact approach for the bilevel mixed-integer
  programming problem.
\newblock {\em Operations Research}, 65(3):768--786.

\bibitem[Lozano et~al., 2017]{lozano2017solving}
Lozano, L., Smith, J.~C., and Kurz, M.~E. (2017).
\newblock Solving the traveling salesman problem with interdiction and
  fortification.
\newblock {\em Operations Research Letters}, 45(3):210--216.

\bibitem[Mc~Carthy et~al., 2016]{mc2016preventing}
Mc~Carthy, S.~M., Tambe, M., Kiekintveld, C., Gore, M., and Killion, A. (2016).
\newblock Preventing illegal logging: Simultaneous optimization of resource
  teams and tactics for security.
\newblock In {\em Proceedings of the AAAI Conference on Artificial
  Intelligence}, volume~30.

\bibitem[Morton et~al., 2007]{morton2007models}
Morton, D.~P., Pan, F., and Saeger, K.~J. (2007).
\newblock Models for nuclear smuggling interdiction.
\newblock {\em IIE Transactions}, 39(1):3--14.

\bibitem[Nemhauser and Wolsey, 1981]{nemhauser1981maximizing}
Nemhauser, G.~L. and Wolsey, L.~A. (1981).
\newblock Maximizing submodular set functions: formulations and analysis of
  algorithms.
\newblock In {\em North-Holland mathematics studies}, volume~59, pages
  279--301. Elsevier.

\bibitem[Nemhauser et~al., 1978]{nemhauser1978analysis}
Nemhauser, G.~L., Wolsey, L.~A., and Fisher, M.~L. (1978).
\newblock An analysis of approximations for maximizing submodular set
  functions—i.
\newblock {\em Mathematical Programming}, 14(1):265--294.

\bibitem[ReVelle et~al., 2008]{revelle2008solving}
ReVelle, C., Scholssberg, M., and Williams, J. (2008).
\newblock Solving the maximal covering location problem with heuristic
  concentration.
\newblock {\em Computers \& Operations Research}, 35(2):427--435.

\bibitem[Roboredo et~al., 2019]{roboredo2019exact}
Roboredo, M.~C., Aizemberg, L., and Pessoa, A.~A. (2019).
\newblock An exact approach for the r-interdiction covering problem with
  fortification.
\newblock {\em Central European Journal of Operations Research},
  27(1):111--131.

\bibitem[Sakaue and Ishihata, 2018]{sakaue2018accelerated}
Sakaue, S. and Ishihata, M. (2018).
\newblock Accelerated best-first search with upper-bound computation for
  submodular function maximization.
\newblock In {\em Proceedings of the AAAI Conference on Artificial
  Intelligence}, volume~32.

\bibitem[Salvagnin, 2019]{salvagnin2019some}
Salvagnin, D. (2019).
\newblock Some experiments with submodular function maximization via integer
  programming.
\newblock In {\em International Conference on Integration of Constraint
  Programming, Artificial Intelligence, and Operations Research}, pages
  488--501. Springer.

\bibitem[Scaparra and Church, 2008a]{scaparra2008bilevel}
Scaparra, M.~P. and Church, R.~L. (2008a).
\newblock A bilevel mixed-integer program for critical infrastructure
  protection planning.
\newblock {\em Computers \& Operations Research}, 35(6):1905--1923.

\bibitem[Scaparra and Church, 2008b]{scaparra2008exact}
Scaparra, M.~P. and Church, R.~L. (2008b).
\newblock An exact solution approach for the interdiction median problem with
  fortification.
\newblock {\em European Journal of Operational Research}, 189(1):76--92.

\bibitem[Schrijver, 2003]{schrijver2003combinatorial}
Schrijver, A. (2003).
\newblock {\em Combinatorial optimization: polyhedra and efficiency},
  volume~24.
\newblock Springer Science \& Business Media.

\bibitem[Sefair et~al., 2017]{sefair2017defender}
Sefair, J.~A., Smith, J.~C., Acevedo, M.~A., and Fletcher~Jr, R.~J. (2017).
\newblock A defender-attacker model and algorithm for maximizing weighted
  expected hitting time with application to conservation planning.
\newblock {\em IISE Transactions}, 49(12):1112--1128.

\bibitem[Shen et~al., 2012]{shen2012exact}
Shen, S., Smith, J.~C., and Goli, R. (2012).
\newblock Exact interdiction models and algorithms for disconnecting networks
  via node deletions.
\newblock {\em Discrete Optimization}, 9(3):172--188.

\bibitem[Smith and Song, 2020]{smith2020survey}
Smith, J.~C. and Song, Y. (2020).
\newblock A survey of network interdiction models and algorithms.
\newblock {\em European Journal of Operational Research}, 283(3):797--811.

\bibitem[Tahernejad et~al., 2020]{tahernejad2020branch}
Tahernejad, S., Ralphs, T.~K., and DeNegre, S.~T. (2020).
\newblock A branch-and-cut algorithm for mixed integer bilevel linear
  optimization problems and its implementation.
\newblock {\em Mathematical Programming Computation}, 12(4):529--568.

\bibitem[Tang et~al., 2016]{tang2016class}
Tang, Y., Richard, J.-P.~P., and Smith, J.~C. (2016).
\newblock A class of algorithms for mixed-integer bilevel min--max
  optimization.
\newblock {\em Journal of Global Optimization}, 66(2):225--262.

\bibitem[Tan{\i}nm{\i}\c{s} et~al., 2020]{taninmis2020improved}
Tan{\i}nm{\i}\c{s}, K., Aras, N., and Altınel, .~K. (2020).
\newblock Improved x-space algorithm for min-max bilevel integer programming
  with an application to misinformation spread in social networks.
\newblock Technical Report FBE/IE-03/2020-03, Dept. of Industrial Engineering,
  Boğaziçi University, İstanbul, Turkey.

\bibitem[Vohra and Hall, 1993]{vohra1993probabilistic}
Vohra, R.~V. and Hall, N.~G. (1993).
\newblock A probabilistic analysis of the maximal covering location problem.
\newblock {\em Discrete Applied Mathematics}, 43(2):175--183.

\bibitem[Von~Stackelberg, 1952]{von1952theory}
Von~Stackelberg, H. (1952).
\newblock {\em The theory of the market economy}.
\newblock Oxford University Press.

\bibitem[Wollmer, 1964]{wollmer1964removing}
Wollmer, R. (1964).
\newblock Removing arcs from a network.
\newblock {\em Operations Research}, 12(6):934--940.

\bibitem[Wood, 1993]{wood1993deterministic}
Wood, R.~K. (1993).
\newblock Deterministic network interdiction.
\newblock {\em Mathematical and Computer Modelling}, 17(2):1--18.

\bibitem[Xu and Wang, 2014]{xu2014exact}
Xu, P. and Wang, L. (2014).
\newblock An exact algorithm for the bilevel mixed integer linear programming
  problem under three simplifying assumptions.
\newblock {\em Computers \& Operations Research}, 41:309--318.

\bibitem[Zheng and Albert, 2018]{zheng2018exact}
Zheng, K. and Albert, L.~A. (2018).
\newblock An exact algorithm for solving the bilevel facility interdiction and
  fortification problem.
\newblock {\em Operations Research Letters}, 46(6):573--578.

\end{thebibliography}

\clearpage
\appendix

\section{The MIBLP Formulation of \WMCIG }
\label{section:Appendix MCI model}


Let binary leader variables $x_i$, $i \in N$ indicate the interdiction decisions of the leader. Let binary follower variables $y_i$, $i \in N$ take the value one if and only if facility $i \in N$ is open in a solution. 
Let binary follower variables $z_j$, $j \in J$ take the value one if and only if customer $j$ is covered in a solution. The following formulation is used while solving \WMCIG instances by the MIBLP solver MIX{\tiny++}. 

\begin{align}
\min_{x\in X} \, \max_{y,z} &\, \sum_{j\in J}p_j z_j \notag \\
\text{s.t.} &\, \sum_{i\in N}y_i\leq B  \notag\\
& z_j \leq \sum_{i:j\in J(i)}y_i  &\forall j&\in J  \notag\\
& y_i\leq 1-x_i &\forall i& \in N  \notag \\
& y_i\in \{0,1\}&\forall i& \in N  \notag \\
& z_j\in \{0,1\}&\forall j& \in J \notag
\end{align}
where $X=\{x\in \{0,1\}^{|N|}: \sum_{i\in N}x_i \leq k \}$.

\section{Detailed Results}
\label{section:Appendix results}

\begin{landscape}
\begin{table}[h!]
\footnotesize \centering
\caption{Detailed \WMCIG results under separation option S2: basic and complete setting comparison.}
\label{table:Appendix-MCI-1}
\begin{tabular}{l|rrrrrrr|rrrrrrr}
\midrule
  & \multicolumn{7}{c|}{B-S2} & \multicolumn{7}{c}{ILDAE-S2} \\ \midrule 
  $(n,B,p,r,ID)$ & t(s.) & UB & LB & Gap(\%) & rGap(\%) & \#Nodes & \#SIC & t(s.)& UB & LB & Gap(\%) & rGap(\%) & \#Nodes & \#SIC\\ \midrule
(50,5,5,1,1) & 1 & 882 &  882.0 &  0.0 & 30.3 & 118 & 154 & 0 & 882 &  882.0 & 0.0 & 12.2 & 11 & 80 \\ 
  (50,5,5,1,2) & 1 & 883 &  883.0 &  0.0 & 31.4 & 258 & 240 & 0 & 883 &  883.0 & 0.0 & 13.1 & 78 & 213 \\ 
  (50,5,5,1,3) & 3 & 1003 & 1003.0 &  0.0 & 27.2 & 224 & 186 & 0 & 1003 & 1003.0 & 0.0 & 17.5 & 70 & 228 \\ 
  (50,5,5,2,1) & 12 & 1852 & 1852.0 &  0.0 & 24.1 & 658 & 178 & 1 & 1852 & 1852.0 & 0.0 & 16.5 & 58 & 200 \\ 
  (50,5,5,2,2) & 3 & 1835 & 1835.0 &  0.0 & 25.9 & 511 & 183 & 1 & 1835 & 1835.0 & 0.0 & 14.6 & 181 & 278 \\ 
  (50,5,5,2,3) & 12 & 2031 & 2031.0 &  0.0 & 28.9 & 730 & 184 & 4 & 2031 & 2031.0 & 0.0 & 16.6 & 410 & 271 \\ 
  (50,5,5,3,1) & 2 & 2442 & 2442.0 &  0.0 & 27.6 & 1085 & 233 & 1 & 2442 & 2442.0 & 0.0 & 15.2 & 306 & 153 \\ 
  (50,5,5,3,2) & 9 & 2171 & 2171.0 &  0.0 & 25.5 & 1593 & 164 & 8 & 2171 & 2171.0 & 0.0 & 14.1 & 435 & 170 \\ 
  (50,5,5,3,3) & 5 & 2383 & 2383.0 &  0.0 & 26.4 & 953 & 125 & 0 & 2383 & 2383.0 & 0.0 & 15.8 & 387 & 200 \\ 
  (50,5,10,1,1) & 8 & 828 &  827.9 &  0.0 & 41.7 & 2006 & 1432 & 0 & 828 &  828.0 & 0.0 & 25.5 & 158 & 594 \\ 
  (50,5,10,1,2) & 2 & 933 &  933.0 &  0.0 & 45.4 & 1777 & 1069 & 4 & 933 &  933.0 & 0.0 & 31.2 & 93 & 457 \\ 
  (50,5,10,1,3) & 8 & 720 &  720.0 &  0.0 & 43.8 & 3861 & 2168 & 5 & 720 &  720.0 & 0.0 & 29.9 & 333 & 1404 \\ 
  (50,5,10,2,1) & 5 & 1747 & 1746.9 &  0.0 & 40.0 & 4333 & 1247 & 1 & 1747 & 1747.0 & 0.0 & 26.4 & 396 & 960 \\ 
  (50,5,10,2,2) & 8 & 1587 & 1586.8 &  0.0 & 39.8 & 7927 & 1373 & 3 & 1587 & 1587.0 & 0.0 & 25.9 & 827 & 1239 \\ 
  (50,5,10,2,3) & 19 & 1634 & 1634.0 &  0.0 & 36.6 & 7230 & 1174 & 5 & 1634 & 1633.9 & 0.0 & 26.5 & 967 & 1249 \\ 
  (50,5,10,3,1) & 30 & 2353 & 2353.0 &  0.0 & 37.7 & 6482 & 942 & 3 & 2353 & 2353.0 & 0.0 & 27.0 & 243 & 270 \\ 
  (50,5,10,3,2) & 10 & 2287 & 2286.9 &  0.0 & 40.8 & 5371 & 911 & 1 & 2287 & 2287.0 & 0.0 & 28.6 & 257 & 285 \\ 
  (50,5,10,3,3) & 21 & 2024 & 2023.8 &  0.0 & 40.9 & 13688 & 1004 & 3 & 2024 & 2024.0 & 0.0 & 25.8 & 553 & 530 \\ 
  (60,6,6,1,1) & 1 & 1115 & 1115.0 &  0.0 & 30.0 & 571 & 513 & 8 & 1115 & 1115.0 & 0.0 & 19.0 & 83 & 262 \\ 
  (60,6,6,1,2) & 12 & 1391 & 1391.0 &  0.0 & 31.6 & 1108 & 529 & 0 & 1391 & 1391.0 & 0.0 & 17.8 & 99 & 383 \\ 
  (60,6,6,1,3) & 1 & 1326 & 1326.0 &  0.0 & 31.3 & 1064 & 506 & 1 & 1326 & 1326.0 & 0.0 & 19.3 & 203 & 557 \\ 
  (60,6,6,2,1) & 71 & 2625 & 2625.0 &  0.0 & 26.8 & 4338 & 429 & 66 & 2625 & 2625.0 & 0.0 & 18.6 & 1972 & 791 \\ 
  (60,6,6,2,2) & 24 & 2463 & 2463.0 &  0.0 & 26.3 & 3006 & 507 & 7 & 2463 & 2462.9 & 0.0 & 18.2 & 873 & 741 \\ 
  (60,6,6,2,3) & 62 & 2238 & 2238.0 &  0.0 & 32.0 & 1650 & 498 & 25 & 2238 & 2238.0 & 0.0 & 19.4 & 919 & 824 \\ 
  (60,6,6,3,1) & 3 & 2994 & 2994.0 &  0.0 & 29.5 & 6562 & 560 & 1 & 2994 & 2994.0 & 0.0 & 22.0 & 593 & 372 \\ 
  (60,6,6,3,2) & 4 & 2813 & 2812.8 &  0.0 & 32.0 & 3650 & 482 & 1 & 2813 & 2812.9 & 0.0 & 19.4 & 949 & 335 \\ 
  (60,6,6,3,3) & 8 & 2944 & 2944.0 &  0.0 & 29.0 & 8824 & 544 & 1 & 2944 & 2943.8 & 0.0 & 19.7 & 1763 & 525 \\ 
  (60,6,12,1,1) & 11 & 1031 & 1031.0 &  0.0 & 38.7 & 4281 & 2638 & 1 & 1031 & 1031.0 & 0.0 & 27.2 & 116 & 781 \\ 
  (60,6,12,1,2) & 37 & 935 &  935.0 &  0.0 & 42.9 & 12834 & 5639 & 3 & 935 &  935.0 & 0.0 & 31.1 & 341 & 1840 \\ 
  (60,6,12,1,3) & 7 & 995 &  995.0 &  0.0 & 43.0 & 2073 & 2060 & 2 & 995 &  995.0 & 0.0 & 29.2 & 47 & 377 \\ 
  (60,6,12,2,1) & 334 & 2385 & 2384.8 &  0.0 & 34.8 & 65219 & 5285 & 186 & 2385 & 2384.8 & 0.0 & 29.0 & 3819 & 4670 \\ 
  (60,6,12,2,2) & 364 & 2029 & 2028.8 &  0.0 & 36.9 & 102142 & 6493 & 140 & 2029 & 2028.8 & 0.0 & 31.9 & 13480 & 6947 \\ 
  (60,6,12,2,3) & 95 & 2252 & 2252.0 &  0.0 & 36.7 & 33502 & 5148 & 12 & 2252 & 2251.9 & 0.0 & 30.1 & 1659 & 2488 \\ 
  (60,6,12,3,1) & 214 & 2493 & 2492.8 &  0.0 & 46.9 & 64054 & 4787 & 1 & 2493 & 2493.0 & 0.0 & 31.9 & 304 & 467 \\ 
  (60,6,12,3,2) & 307 & 2673 & 2673.0 &  0.0 & 40.8 & 73903 & 5019 & 14 & 2673 & 2672.9 & 0.0 & 29.1 & 1989 & 1173 \\ 
  (60,6,12,3,3) & 283 & 2547 & 2547.0 &  0.0 & 47.8 & 116603 & 4638 & 10 & 2547 & 2546.8 & 0.0 & 29.0 & 4469 & 1837 \\ 
    \midrule
\end{tabular}
\end{table}
\end{landscape}

\begin{landscape}
\begin{table}
\caption{Detailed \WMCIG results under separation option S2: basic and complete setting comparison (continued).}
\label{table:Appendix-MCI-2}
\centering
\footnotesize
\begin{tabular}{l|rrrrrrr|rrrrrrr}
\midrule
  & \multicolumn{7}{c|}{B-S2} & \multicolumn{7}{c}{ILDAE-S2} \\ \midrule 
  $(n,B,p,r,ID)$ & t(s.) & UB & LB & Gap(\%) & rGap(\%) & \#Nodes & \#SIC & t(s.)& UB & LB & Gap(\%) & rGap(\%) & \#Nodes & \#SIC\\ \midrule
   (70,7,7,1,1) & 7 & 1441 & 1441.0 &  0.0 & 31.7 & 4292 & 1952 & 5 & 1441 & 1441.0 & 0.0 & 19.4 & 278 & 1017 \\ 
  (70,7,7,1,2) & 6 & 1427 & 1427.0 &  0.0 & 31.0 & 3745 & 1645 & 2 & 1427 & 1427.0 & 0.0 & 22.4 & 243 & 859 \\ 
  (70,7,7,1,3) & 6 & 1567 & 1567.0 &  0.0 & 30.4 & 2639 & 1910 & 1 & 1567 & 1567.0 & 0.0 & 19.7 & 205 & 588 \\ 
  (70,7,7,2,1) & 243 & 2785 & 2785.0 &  0.0 & 27.6 & 8306 & 1738 & 76 & 2785 & 2785.0 & 0.0 & 20.2 & 1200 & 1104 \\ 
  (70,7,7,2,2) & 366 & 3225 & 3224.7 &  0.0 & 26.2 & 22000 & 1262 & 131 & 3225 & 3224.7 & 0.0 & 20.2 & 9623 & 1324 \\ 
  (70,7,7,2,3) & 847 & 3122 & 3121.8 &  0.0 & 29.1 & 55435 & 796 & 229 & 3122 & 3121.8 & 0.0 & 22.2 & 21633 & 967 \\ 
  (70,7,7,3,1) & 45 & 3196 & 3195.7 &  0.0 & 28.3 & 42733 & 1789 & 5 & 3196 & 3195.7 & 0.0 & 22.5 & 6253 & 611 \\ 
  (70,7,7,3,2) & 9 & 3293 & 3292.8 &  0.0 & 33.4 & 11951 & 1558 & 2 & 3293 & 3292.7 & 0.0 & 20.3 & 1391 & 762 \\ 
  (70,7,7,3,3) & 42 & 3318 & 3317.7 &  0.0 & 34.5 & 31078 & 1285 & 2 & 3318 & 3318.0 & 0.0 & 20.2 & 1057 & 415 \\ 
  (70,7,14,1,1) & 86 & 1288 & 1287.9 &  0.0 & 40.4 & 13126 & 8263 & 3 & 1288 & 1288.0 & 0.0 & 31.3 & 321 & 1200 \\ 
  (70,7,14,1,2) & TL & 1400 & 1355.5 &  3.2 & 45.0 & 179438 & 50192 & 289 & 1400 & 1399.9 & 0.0 & 33.5 & 4358 & 18345 \\ 
  (70,7,14,1,3) & 206 & 1331 & 1330.9 &  0.0 & 42.6 & 18283 & 11931 & 6 & 1331 & 1331.0 & 0.0 & 33.2 & 418 & 1818 \\ 
  (70,7,14,2,1) & TL & 2940 & 2776.2 &  5.6 & 39.5 & 179608 & 20917 & 159 & 2940 & 2939.7 & 0.0 & 33.6 & 11097 & 7885 \\ 
  (70,7,14,2,2) & 873 & 2647 & 2646.7 &  0.0 & 38.5 & 48718 & 6268 & 55 & 2647 & 2647.0 & 0.0 & 30.0 & 596 & 1138 \\ 
  (70,7,14,2,3) & TL & 3001 & 2847.4 &  5.1 & 37.3 & 306010 & 22123 & TL & 3001 & 2982.2 & 0.6 & 32.4 & 79245 & 23186 \\ 
  (70,7,14,3,1) & TL & 3409 & 3317.9 &  2.7 & 40.5 & 221008 & 13647 & 13 & 3409 & 3409.0 & 0.0 & 31.3 & 4102 & 1830 \\ 
  (70,7,14,3,2) & 2396 & 3065 & 3064.7 &  0.0 & 40.0 & 200286 & 9799 & 23 & 3065 & 3064.8 & 0.0 & 31.4 & 2285 & 2185 \\ 
  (70,7,14,3,3) & 2673 & 3054 & 3053.7 &  0.0 & 47.8 & 181042 & 11194 & 11 & 3054 & 3054.0 & 0.0 & 28.5 & 2128 & 1242 \\ 
  (80,8,8,1,1) & 18 & 1852 & 1851.9 &  0.0 & 28.6 & 6562 & 3300 & 3 & 1852 & 1852.0 & 0.0 & 19.4 & 240 & 802 \\ 
  (80,8,8,1,2) & 16 & 1896 & 1896.0 &  0.0 & 32.4 & 5670 & 3493 & 3 & 1896 & 1896.0 & 0.0 & 21.1 & 325 & 835 \\ 
  (80,8,8,1,3) & 22 & 1915 & 1915.0 &  0.0 & 30.6 & 7969 & 3345 & 18 & 1915 & 1915.0 & 0.0 & 23.5 & 497 & 1695 \\ 
  (80,8,8,2,1) & 1424 & 3603 & 3602.6 &  0.0 & 29.0 & 56405 & 4443 & 113 & 3603 & 3602.6 & 0.0 & 20.5 & 11148 & 2887 \\ 
  (80,8,8,2,2) & 1078 & 3340 & 3339.7 &  0.0 & 28.4 & 141737 & 3041 & 272 & 3340 & 3339.7 & 0.0 & 20.0 & 39300 & 2192 \\ 
  (80,8,8,2,3) & 1043 & 3312 & 3311.7 &  0.0 & 27.6 & 110767 & 2756 & 73 & 3312 & 3311.7 & 0.0 & 18.9 & 12361 & 2207 \\ 
  (80,8,8,3,1) & 171 & 3860 & 3859.6 &  0.0 & 34.7 & 60099 & 3841 & 1 & 3860 & 3860.0 & 0.0 & 22.6 & 1526 & 556 \\ 
  (80,8,8,3,2) & 635 & 3818 & 3818.0 &  0.0 & 40.6 & 190382 & 6860 & 7 & 3818 & 3818.0 & 0.0 & 25.4 & 7503 & 807 \\ 
  (80,8,8,3,3) & 282 & 4501 & 4500.6 &  0.0 & 40.2 & 121391 & 3197 & 3 & 4501 & 4500.6 & 0.0 & 26.6 & 2666 & 715 \\ 
  (80,8,16,1,1) & TL & 1537 & 1411.0 &  8.2 & 45.1 & 80358 & 64277 & 79 & 1537 & 1536.9 & 0.0 & 36.5 & 2040 & 8442 \\ 
  (80,8,16,1,2) & TL & 1607 & 1464.8 &  8.8 & 44.4 & 84609 & 62889 & 30 & 1586 & 1585.8 & 0.0 & 36.7 & 876 & 5354 \\ 
  (80,8,16,1,3) & TL & 1569 & 1498.4 &  4.5 & 45.7 & 88200 & 52448 & 28 & 1569 & 1569.0 & 0.0 & 36.9 & 869 & 4806 \\ 
  (80,8,16,2,1) & TL & 3547 & 3179.4 & 10.4 & 38.9 & 78427 & 18223 & 357 & 3534 & 3533.7 & 0.0 & 30.5 & 11053 & 11962 \\ 
  (80,8,16,2,2) & TL & 3510 & 2999.2 & 14.6 & 38.0 & 54756 & 14970 & TL & 3494 & 3390.6 & 3.0 & 31.1 & 32400 & 23595 \\ 
  (80,8,16,2,3) & TL & 3243 & 2768.1 & 14.6 & 38.4 & 52184 & 13611 & 1048 & 3196 & 3195.7 & 0.0 & 28.8 & 40677 & 16124 \\ 
  (80,8,16,3,1) & TL & 3685 & 3251.0 & 11.8 & 50.3 & 150519 & 17079 & 15 & 3618 & 3617.7 & 0.0 & 30.9 & 4872 & 1900 \\ 
  (80,8,16,3,2) & TL & 3382 & 3296.0 &  2.5 & 47.5 & 135481 & 14476 & 4 & 3382 & 3382.0 & 0.0 & 28.7 & 1185 & 1045 \\ 
  (80,8,16,3,3) & TL & 3305 & 3017.7 &  8.7 & 53.1 & 141120 & 14903 & 12 & 3296 & 3295.7 & 0.0 & 34.3 & 2826 & 1786 \\ 
\midrule
\end{tabular}
{}
\end{table}

\begin{table} 
\caption{Detailed \WMCIG results under separation option S2: basic and complete setting comparison (continued).}
\label{table:Appendix-MCI-3}
\centering\footnotesize
\begin{tabular}{l|rrrrrrr|rrrrrrr}
\midrule
  & \multicolumn{7}{c|}{B-S2} & \multicolumn{7}{c}{ILDAE-S2} \\ \midrule 
 $(n,B,p,r,ID)$ & t(s.) & UB & LB & Gap(\%) & rGap(\%) & \#Nodes & \#SIC & t(s.)& UB & LB & Gap(\%) & rGap(\%) & \#Nodes & \#SIC\\ \midrule
  (90,9,9,1,1) & 22 & 2343 & 2343.0 &  0.0 & 30.6 & 3950 & 4978 & 7 & 2343 & 2343.0 & 0.0 & 20.4 & 140 & 983 \\ 
  (90,9,9,1,2) & 409 & 1860 & 1859.8 &  0.0 & 29.4 & 49727 & 15633 & 50 & 1860 & 1859.9 & 0.0 & 23.0 & 1442 & 4343 \\ 
  (90,9,9,1,3) & 274 & 2447 & 2446.8 &  0.0 & 29.4 & 29199 & 11829 & 42 & 2447 & 2446.8 & 0.0 & 22.2 & 1020 & 4336 \\ 
  (90,9,9,2,1) & TL & 4579 & 4427.7 &  3.3 & 29.3 & 119802 & 8017 & 604 & 4579 & 4578.6 & 0.0 & 20.7 & 34771 & 4065 \\ 
  (90,9,9,2,2) & TL & 3909 & 3082.0 & 21.2 & 25.7 & 70 & 145 & 1710 & 3887 & 3886.6 & 0.0 & 16.8 & 59925 & 4468 \\ 
  (90,9,9,2,3) & TL & 4273 & 3485.6 & 18.4 & 27.4 & 1110 & 625 & 1019 & 4190 & 4189.6 & 0.0 & 18.4 & 89360 & 5975 \\ 
  (90,9,9,3,1) & TL & 4574 & 4465.5 &  2.4 & 38.7 & 374194 & 9400 & 6 & 4574 & 4573.6 & 0.0 & 24.1 & 6241 & 886 \\ 
  (90,9,9,3,2) & TL & 5147 & 5072.1 &  1.5 & 41.8 & 362315 & 13868 & 4 & 5147 & 5146.6 & 0.0 & 23.6 & 2473 & 831 \\ 
  (90,9,9,3,3) & 3511 & 4137 & 4136.6 &  0.0 & 38.0 & 373095 & 8215 & 10 & 4137 & 4136.7 & 0.0 & 23.7 & 6642 & 1009 \\ 
  (90,9,18,1,1) & TL & 1966 & 1671.6 & 15.0 & 42.3 & 45800 & 62635 & 143 & 1949 & 1948.9 & 0.0 & 34.8 & 2225 & 11965 \\ 
  (90,9,18,1,2) & TL & 1875 & 1661.8 & 11.4 & 42.3 & 53600 & 58722 & 225 & 1867 & 1866.8 & 0.0 & 34.8 & 3675 & 12381 \\ 
  (90,9,18,1,3) & TL & 1709 & 1517.6 & 11.2 & 41.6 & 49606 & 49914 & 93 & 1707 & 1707.0 & 0.0 & 34.9 & 2098 & 8555 \\ 
  (90,9,18,2,1) & TL & 4111 & 3180.5 & 22.6 & 38.7 & 5200 & 3791 & 379 & 3861 & 3860.7 & 0.0 & 31.1 & 7910 & 8876 \\ 
  (90,9,18,2,2) & TL & 4372 & 2989.9 & 31.6 & 39.0 & 900 & 1284 & TL & 4056 & 3941.5 & 2.8 & 28.3 & 43714 & 28317 \\ 
  (90,9,18,2,3) & TL & 3726 & 2980.1 & 20.0 & 36.7 & 6900 & 6812 & 686 & 3630 & 3629.6 & 0.0 & 30.8 & 29127 & 13319 \\ 
  (90,9,18,3,1) & TL & 4389 & 3289.9 & 25.0 & 57.0 & 79511 & 31949 & 35 & 4266 & 4265.6 & 0.0 & 30.5 & 8296 & 3647 \\ 
  (90,9,18,3,2) & TL & 4647 & 3728.5 & 19.8 & 63.3 & 86129 & 23560 & 93 & 4557 & 4556.6 & 0.0 & 34.9 & 18263 & 3319 \\ 
  (90,9,18,3,3) & TL & 4035 & 3255.0 & 19.3 & 57.3 & 68008 & 24866 & 8 & 3969 & 3968.6 & 0.0 & 30.6 & 2738 & 1398 \\ 
  (100,10,10,1,1) & 2071 & 2605 & 2604.7 &  0.0 & 31.1 & 103857 & 28889 & 66 & 2605 & 2604.8 & 0.0 & 22.0 & 2471 & 6374 \\ 
  (100,10,10,1,2) & 1123 & 2486 & 2485.8 &  0.0 & 32.7 & 46843 & 26071 & 32 & 2486 & 2485.9 & 0.0 & 26.8 & 1565 & 3842 \\ 
  (100,10,10,1,3) & 403 & 2424 & 2423.8 &  0.0 & 31.2 & 32035 & 12850 & 15 & 2424 & 2423.9 & 0.0 & 21.8 & 320 & 1655 \\ 
  (100,10,10,2,1) & TL & 4163 & 3551.4 & 14.7 & 27.5 & 3199 & 2043 & 758 & 4091 & 4090.6 & 0.0 & 21 & 28611 & 5282 \\ 
  (100,10,10,2,2) & TL & 4328 & 4148.5 &  4.1 & 25.8 & 67344 & 11120 & 116 & 4317 & 4316.6 & 0.0 & 20.5 & 5268 & 2666 \\ 
  (100,10,10,2,3) & TL & 4271 & 4057.2 &  5.0 & 30.5 & 115011 & 10229 & 324 & 4271 & 4270.6 & 0.0 & 21.6 & 66793 & 5935 \\ 
  (100,10,10,3,1) & TL & 4871 & 4350.3 & 10.7 & 42.4 & 181107 & 24528 & 21 & 4851 & 4850.5 & 0.0 & 25.9 & 12279 & 1562 \\ 
  (100,10,10,3,2) & TL & 5264 & 4831.7 &  8.2 & 43.1 & 137616 & 20799 & 23 & 5264 & 5264.0 & 0.0 & 26.6 & 4581 & 1410 \\ 
  (100,10,10,3,3) & TL & 4430 & 4106.3 &  7.3 & 40.6 & 264138 & 14489 & 24 & 4430 & 4429.6 & 0.0 & 26.7 & 15221 & 1415 \\ 
  (100,10,20,1,1) & TL & 2274 & 1812.1 & 20.3 & 43.9 & 33100 & 62332 & TL & 2243 & 2187.4 & 2.5 & 36.0 & 9237 & 49097 \\ 
  (100,10,20,1,2) & TL & 2575 & 2080.0 & 19.2 & 41.3 & 32700 & 51822 & 1258 & 2538 & 2537.8 & 0.0 & 35.5 & 5245 & 27756 \\ 
  (100,10,20,1,3) & TL & 2101 & 1659.7 & 21.0 & 42.0 & 30204 & 55337 & 2701 & 2074 & 2073.8 & 0.0 & 35.7 & 9026 & 37033 \\ 
  (100,10,20,2,1) & TL & 4378 & 3735.5 & 14.7 & 42.2 & 55400 & 24859 & 311 & 4278 & 4277.6 & 0.0 & 31.9 & 9821 & 5672 \\ 
  (100,10,20,2,2) & TL & 4523 & 3606.2 & 20.3 & 39.8 & 2881 & 3736 & 215 & 4279 & 4278.6 & 0.0 & 31.1 & 6095 & 4390 \\ 
  (100,10,20,2,3) & TL & 4681 & 3897.3 & 16.7 & 42.8 & 34217 & 14905 & 972 & 4521 & 4520.6 & 0.0 & 31.4 & 26589 & 8652 \\ 
  (100,10,20,3,1) & TL & 5022 & 3364.9 & 33.0 & 67.6 & 74907 & 28993 & 349 & 4911 & 4910.5 & 0.0 & 35.2 & 38637 & 5308 \\ 
  (100,10,20,3,2) & TL & 4917 & 3239.4 & 34.1 & 63.1 & 68400 & 30693 & 63 & 4523 & 4523.0 & 0.0 & 34.9 & 16533 & 3055 \\ 
  (100,10,20,3,3) & TL & 4825 & 3120.3 & 35.3 & 64.5 & 66800 & 26868 & 989 & 4609 & 4608.6 & 0.0 & 36.0 & 47811 & 8275 \\  \midrule
 Average & 1606.5 & 2778.4 & 2569.9 & 5.4 & 37.5 & 62216.6 & 12235.8 & 290.3 & 2753.7 & 2750.9 & 0.1 & 25.9 & 9114.8 & 4503.6 \\ 
   \midrule
\end{tabular}
{}
\end{table}

\begin{table}
\caption{Detailed \BIIG results under separation option S2: basic and complete setting comparison.}
\label{table:Appendix-BII-1}
{\centering\small
\begin{tabular}{l|rrrrrrr|rrrrrrr}
\midrule
  & \multicolumn{7}{c|}{B-S2} & \multicolumn{7}{c}{ILDAE-S2} \\ \midrule 
 $(n,m,B,p,d,ID)$ & t(s.) & UB & LB & Gap(\%) & rGap(\%) & \#Nodes & \#SIC & t(s.)& UB & LB & Gap(\%) & rGap(\%) & \#Nodes & \#SIC\\ \midrule
(20,40,5,5,0.07,1) & 0 & 7 &   7.0 &  0.0 & 43.0 & 156 & 173 & 0 & 7 &   7.0 &  0.0 & 28.4 & 92 & 305 \\ 
  (20,40,5,5,0.07,2) & 1 & 9 &   9.0 &  0.0 & 43.9 & 110 & 146 & 0 & 9 &   9.0 &  0.0 & 30.4 & 61 & 226 \\ 
  (20,40,5,5,0.10,1) & 0 & 9.8 &   9.8 &  0.0 & 43.5 & 121 & 120 & 0 & 9.8 &   9.8 &  0.0 & 22.8 & 79 & 207 \\ 
  (20,40,5,5,0.10,2) & 0 & 8.8 &   8.8 &  0.0 & 41.7 & 35 & 53 & 0 & 8.8 &   8.8 &  0.0 & 23.0 & 17 & 62 \\ 
  (20,40,5,5,0.15,1) & 0 & 12.1 &  12.1 &  0.0 & 33.1 & 48 & 57 & 0 & 12.1 &  12.1 &  0.0 & 22.1 & 31 & 104 \\ 
  (20,40,5,5,0.15,2) & 1 & 19.5 &  19.5 &  0.0 & 46.0 & 255 & 220 & 1 & 19.5 &  19.5 &  0.0 & 31.2 & 125 & 386 \\ 
  (20,40,10,5,0.07,1) & 0 & 11 &  11.0 &  0.0 & 31.3 & 48 & 101 & 0 & 11 &  11.0 &  0.0 & 22.8 & 37 & 125 \\ 
  (20,40,10,5,0.07,2) & 0 & 8.7 &   8.7 &  0.0 & 26.5 & 40 & 81 & 0 & 8.7 &   8.7 &  0.0 & 21.0 & 26 & 111 \\ 
  (20,40,10,5,0.10,1) & 0 & 12.6 &  12.6 &  0.0 & 32.5 & 57 & 92 & 0 & 12.6 &  12.6 &  0.0 & 24.2 & 27 & 78 \\ 
  (20,40,10,5,0.10,2) & 0 & 16.1 &  16.1 &  0.0 & 38.8 & 107 & 184 & 0 & 16.1 &  16.1 &  0.0 & 27.0 & 78 & 216 \\ 
  (20,40,10,5,0.15,1) & 0 & 15.6 &  15.6 &  0.0 & 32.6 & 78 & 147 & 0 & 15.6 &  15.6 &  0.0 & 21.8 & 46 & 139 \\ 
  (20,40,10,5,0.15,2) & 1 & 21.1 &  21.1 &  0.0 & 41.3 & 335 & 694 & 0 & 21.1 &  21.1 &  0.0 & 29.0 & 60 & 265 \\ 
  (20,100,5,5,0.07,1) & 0 & 18 &  18.0 &  0.0 & 40.9 & 106 & 147 & 0 & 18 &  18.0 &  0.0 & 29.8 & 57 & 212 \\ 
  (20,100,5,5,0.07,2) & 1 & 17.1 &  17.1 &  0.0 & 33.6 & 53 & 89 & 0 & 17.1 &  17.1 &  0.0 & 18.1 & 28 & 108 \\ 
  (20,100,5,5,0.10,1) & 1 & 29.1 &  29.1 &  0.0 & 41.2 & 92 & 127 & 4 & 29.1 &  29.1 &  0.0 & 25.8 & 54 & 213 \\ 
  (20,100,5,5,0.10,2) & 1 & 30.7 &  30.7 &  0.0 & 48.1 & 198 & 245 & 2 & 30.7 &  30.7 &  0.0 & 30.4 & 94 & 387 \\ 
  (20,100,5,5,0.15,1) & 0 & 43.2 &  43.2 &  0.0 & 40.4 & 264 & 183 & 0 & 43.2 &  43.2 &  0.0 & 21.3 & 52 & 319 \\ 
  (20,100,5,5,0.15,2) & 0 & 38.8 &  38.8 &  0.0 & 46.6 & 115 & 156 & 0 & 38.8 &  38.8 &  0.0 & 29.1 & 40 & 173 \\ 
  (20,100,10,5,0.07,1) & 5 & 22.2 &  22.2 &  0.0 & 29.1 & 27 & 65 & 0 & 22.2 &  22.2 &  0.0 & 22.5 & 22 & 72 \\ 
  (20,100,10,5,0.07,2) & 2 & 25.1 &  25.1 &  0.0 & 34.4 & 191 & 339 & 15 & 25.1 &  25.1 &  0.0 & 29.2 & 106 & 394 \\ 
  (20,100,10,5,0.10,1) & 6 & 45.6 &  45.6 &  0.0 & 42.9 & 1187 & 1622 & 3 & 45.6 &  45.6 &  0.0 & 29.0 & 397 & 1345 \\ 
  (20,100,10,5,0.10,2) & 7 & 42.3 &  42.3 &  0.0 & 39.9 & 292 & 529 & 0 & 42.3 &  42.3 &  0.0 & 27.2 & 129 & 400 \\ 
  (20,100,10,5,0.15,1) & 0 & 54.1 &  54.1 &  0.0 & 45.1 & 208 & 393 & 0 & 54.1 &  54.1 &  0.0 & 32.7 & 78 & 224 \\ 
  (20,100,10,5,0.15,2) & 3 & 49.6 &  49.6 &  0.0 & 42.2 & 392 & 763 & 0 & 49.6 &  49.6 &  0.0 & 29.6 & 116 & 377 \\ 
  (20,200,5,5,0.07,1) & 3 & 34.1 &  34.1 &  0.0 & 37.6 & 104 & 134 & 0 & 34.1 &  34.1 &  0.0 & 24.7 & 69 & 206 \\ 
  (20,200,5,5,0.07,2) & 0 & 34.4 &  34.4 &  0.0 & 41.9 & 54 & 80 & 0 & 34.4 &  34.4 &  0.0 & 23.6 & 23 & 92 \\ 
  (20,200,5,5,0.10,1) & 1 & 50.1 &  50.1 &  0.0 & 35.8 & 114 & 130 & 1 & 50.1 &  50.1 &  0.0 & 25.2 & 30 & 203 \\ 
  (20,200,5,5,0.10,2) & 0 & 63.8 &  63.8 &  0.0 & 48.0 & 202 & 241 & 3 & 63.8 &  63.8 &  0.0 & 31.9 & 156 & 507 \\ 
  (20,200,5,5,0.15,1) & 4 & 68.2 &  68.2 &  0.0 & 40.5 & 128 & 162 & 0 & 68.2 &  68.2 &  0.0 & 26.3 & 31 & 171 \\ 
  (20,200,5,5,0.15,2) & 0 & 90.7 &  90.7 &  0.0 & 47.4 & 173 & 203 & 0 & 90.7 &  90.7 &  0.0 & 27.2 & 49 & 311 \\ 
  (20,200,10,5,0.07,1) & 1 & 40.3 &  40.3 &  0.0 & 29.7 & 103 & 201 & 0 & 40.3 &  40.3 &  0.0 & 23.6 & 60 & 235 \\ 
  (20,200,10,5,0.07,2) & 0 & 48.4 &  48.4 &  0.0 & 32.8 & 121 & 219 & 0 & 48.4 &  48.4 &  0.0 & 29.2 & 82 & 278 \\ 
  (20,200,10,5,0.10,1) & 3 & 80.2 &  80.2 &  0.0 & 45.3 & 978 & 1462 & 5 & 80.2 &  80.2 &  0.0 & 33.0 & 264 & 912 \\ 
  (20,200,10,5,0.10,2) & 4 & 89.9 &  89.9 &  0.0 & 43.2 & 736 & 1215 & 1 & 89.9 &  89.9 &  0.0 & 30.5 & 201 & 771 \\ 
  (20,200,10,5,0.15,1) & 8 & 106.7 & 106.7 &  0.0 & 40.5 & 1025 & 1670 & 2 & 106.7 & 106.7 &  0.0 & 26.6 & 182 & 820 \\ 
  (20,200,10,5,0.15,2) & 1 & 101.9 & 101.9 &  0.0 & 44.0 & 369 & 606 & 1 & 101.9 & 101.9 &  0.0 & 27.4 & 134 & 401 \\ 
     \midrule
\end{tabular}}
{}
\end{table}
  
  \begin{table}
\caption{Detailed \BIIG results under separation option S2: basic and complete setting comparison (continued).}
\label{table:Appendix-BII-2}
\centering\footnotesize
\begin{tabular}{l|rrrrrrr|rrrrrrr}
\midrule
  & \multicolumn{7}{c|}{B-S2} & \multicolumn{7}{c}{ILDAE-S2} \\ \midrule 
 $(n,m,B,p,d,ID)$ & t(s.) & UB & LB & Gap(\%) & rGap(\%) & \#Nodes & \#SIC & t(s.)& UB & LB & Gap(\%) & rGap(\%) & \#Nodes & \#SIC\\ \midrule
  (50,100,10,10,0.07,1) & 129 & 34.9 &  34.9 &  0.0 & 44.8 & 5011 & 10904 & 17 & 34.9 &  34.9 &  0.0 & 29.9 & 518 & 2742 \\ 
  (50,100,10,10,0.07,2) & 1399 & 39.7 &  39.7 &  0.0 & 50.4 & 30357 & 39996 & 100 & 39.7 &  39.7 &  0.0 & 34.7 & 1711 & 10919 \\ 
  (50,100,10,10,0.10,1) & 544 & 49.8 &  49.8 &  0.0 & 53.7 & 15896 & 27497 & 43 & 49.8 &  49.8 &  0.0 & 32.1 & 989 & 5323 \\ 
  (50,100,10,10,0.10,2) & 77 & 42.7 &  42.7 &  0.0 & 46.4 & 3854 & 9208 & 11 & 42.7 &  42.7 &  0.0 & 34.6 & 380 & 2187 \\ 
  (50,100,10,10,0.15,1) & 2215 & 66.6 &  66.6 &  0.0 & 59.6 & 52408 & 52320 & 221 & 66.6 &  66.6 &  0.0 & 38.4 & 2481 & 11722 \\ 
  (50,100,10,10,0.15,2) & 2705 & 64 &  64.0 &  0.0 & 51.7 & 72302 & 53658 & 182 & 64 &  64.0 &  0.0 & 36.2 & 2145 & 12375 \\ 
  (50,100,20,10,0.07,1) & TL & 56.9 &  49.8 & 12.4 & 42.4 & 10400 & 50273 & 119 & 55.5 &  55.5 &  0.0 & 27.5 & 2421 & 12942 \\ 
  (50,100,20,10,0.07,2) & TL & 44.7 &  41.2 &  7.9 & 41.1 & 14311 & 60255 & 144 & 44.4 &  44.4 &  0.0 & 25.8 & 3021 & 14765 \\ 
  (50,100,20,10,0.10,1) & TL & 75.7 &  55.6 & 26.5 & 54.0 & 9680 & 51406 & TL & 74.5 &  70.5 &  5.3 & 27.1 & 12537 & 64302 \\ 
  (50,100,20,10,0.10,2) & TL & 75.3 &  57.5 & 23.7 & 50.6 & 10018 & 49087 & TL & 73.9 &  72.0 &  2.5 & 25.6 & 16016 & 59568 \\ 
  (50,100,20,10,0.15,1) & TL & 86.4 &  54.9 & 36.4 & 60.8 & 5604 & 25833 & TL & 85 &  76.9 &  9.5 & 28.8 & 7186 & 28639 \\ 
  (50,100,20,10,0.15,2) & TL & 83.3 &  56.1 & 32.7 & 60.2 & 10201 & 53265 & TL & 81.4 &  78.1 &  4.1 & 29.9 & 15758 & 69968 \\ 
  (50,250,10,10,0.07,1) & 433 & 88.6 &  88.6 &  0.0 & 48.8 & 14045 & 27009 & 50 & 88.6 &  88.6 &  0.0 & 33.9 & 1262 & 8492 \\ 
  (50,250,10,10,0.07,2) & 65 & 83.9 &  83.9 &  0.0 & 44.9 & 2358 & 6228 & 22 & 83.9 &  83.9 &  0.0 & 28.4 & 148 & 1151 \\ 
  (50,250,10,10,0.10,1) & 989 & 119.6 & 119.6 &  0.0 & 54.5 & 15190 & 33535 & 56 & 119.6 & 119.6 &  0.0 & 36.7 & 778 & 5223 \\ 
  (50,250,10,10,0.10,2) & TL & 139.6 &  80.2 & 42.5 & 52.6 & 280 & 1210 & 375 & 136.3 & 136.3 &  0.0 & 37.2 & 2619 & 16895 \\ 
  (50,250,10,10,0.15,1) & TL & 161.9 & 144.6 & 10.7 & 61.0 & 57500 & 63497 & 241 & 161.9 & 161.9 &  0.0 & 39.3 & 3169 & 18397 \\ 
  (50,250,10,10,0.15,2) & TL & 168.2 & 143.6 & 14.6 & 58.6 & 59207 & 66619 & 816 & 167.7 & 167.7 &  0.0 & 40.3 & 6174 & 28222 \\ 
  (50,250,20,10,0.07,1) & TL & 131.1 & 113.3 & 13.5 & 45.8 & 9100 & 49674 & 316 & 129.2 & 129.2 &  0.0 & 30.2 & 3877 & 21942 \\ 
  (50,250,20,10,0.07,2) & TL & 140.5 & 112.8 & 19.7 & 46.0 & 8030 & 51621 & TL & 138.4 & 131.5 &  5.0 & 26.5 & 8600 & 57108 \\ 
  (50,250,20,10,0.10,1) & TL & 178.8 & 126.5 & 29.3 & 53.9 & 7146 & 46735 & TL & 176.1 & 160.8 &  8.7 & 29.4 & 6040 & 50014 \\ 
  (50,250,20,10,0.10,2) & TL & 173.4 & 119.6 & 31.0 & 57.6 & 7101 & 48288 & TL & 172.4 & 154.9 & 10.1 & 31.3 & 6871 & 53266 \\ 
  (50,250,20,10,0.15,1) & TL & 207.3 & 120.5 & 41.9 & 56.1 & 1901 & 11403 & TL & 196.4 & 191.4 &  2.6 & 28.1 & 13240 & 57502 \\ 
  (50,250,20,10,0.15,2) & TL & 206.4 & 125.7 & 39.1 & 61.8 & 6205 & 35725 & TL & 201.3 & 185.1 &  8.1 & 30.5 & 9600 & 53041 \\ 
  (50,500,10,10,0.07,1) & TL & 218.4 &  93.2 & 57.3 & 57.3 & 0 & 6 & 630 & 204.9 & 204.9 &  0.0 & 37.7 & 3421 & 23860 \\ 
  (50,500,10,10,0.07,2) & 616 & 182.1 & 182.1 &  0.0 & 48.3 & 10642 & 24438 & 34 & 182.1 & 182.1 &  0.0 & 35.3 & 721 & 5251 \\ 
  (50,500,10,10,0.10,1) & 1210 & 218.4 & 218.4 &  0.0 & 53.8 & 19088 & 40602 & 32 & 218.4 & 218.4 &  0.0 & 37.5 & 679 & 4789 \\ 
  (50,500,10,10,0.10,2) & TL & 281 & 165.0 & 41.3 & 54.3 & 660 & 3045 & TL & 278.5 & 268.4 &  3.6 & 35.8 & 9910 & 67973 \\ 
  (50,500,10,10,0.15,1) & TL & 306.2 & 284.6 &  7.1 & 55.1 & 45200 & 63211 & 171 & 306.2 & 306.2 &  0.0 & 35.8 & 1696 & 10761 \\ 
  (50,500,10,10,0.15,2) & TL & 316.9 & 275.2 & 13.2 & 59.4 & 43700 & 62129 & 178 & 314.6 & 314.6 &  0.0 & 41.0 & 2430 & 14972 \\ 
  (50,500,20,10,0.07,1) & TL & 278 & 229.6 & 17.4 & 44.7 & 9202 & 58121 & 848 & 272.3 & 272.3 &  0.0 & 25.8 & 4694 & 28875 \\ 
  (50,500,20,10,0.07,2) & TL & 280.3 & 217.0 & 22.6 & 46.8 & 6830 & 48041 & TL & 269.3 & 258.9 &  3.9 & 28.9 & 6216 & 52577 \\ 
  (50,500,20,10,0.10,1) & TL & 374.4 & 248.0 & 33.7 & 54.8 & 7281 & 44691 & TL & 366.6 & 332.4 &  9.3 & 30.1 & 6006 & 48437 \\ 
  (50,500,20,10,0.10,2) & TL & 353.5 & 238.0 & 32.7 & 53.7 & 8627 & 52592 & TL & 351.2 & 321.6 &  8.4 & 31.5 & 5812 & 53780 \\ 
  (50,500,20,10,0.15,1) & TL & 443.1 & 153.5 & 65.3 & 65.3 & 0 & 30 & TL & 436 & 345.4 & 20.8 & 28.4 & 400 & 2695 \\ 
  (50,500,20,10,0.15,2) & TL & 432.8 & 233.2 & 46.1 & 63.4 & 4800 & 27900 & TL & 427.2 & 384.3 & 10.0 & 29.6 & 8938 & 62484 \\ 
     \midrule
\end{tabular}
{}
\end{table}

\begin{table}
\caption{Detailed \BIIG results under separation option S2: basic and complete setting comparison (continued).}
\label{table:Appendix-BII-3}
\centering\footnotesize
\begin{tabular}{l|rrrrrrr|rrrrrrr}
\midrule
  & \multicolumn{7}{c|}{B-S2} & \multicolumn{7}{c}{ILDAE-S2} \\ \midrule 
 $(n,m,B,p,d,ID)$ & t(s.) & UB & LB & Gap(\%) & rGap(\%) & \#Nodes & \#SIC & t(s.)& UB & LB & Gap(\%) & rGap(\%) & \#Nodes & \#SIC\\ \midrule  
  (100,200,10,10,0.07,1) & TL & 88.8 &  84.8 &  4.5 & 41.8 & 104323 & 49295 & 728 & 88.2 &  88.2 &  0.0 & 30.8 & 4440 & 25039 \\ 
  (100,200,10,10,0.07,2) & TL & 95.6 &  93.1 &  2.6 & 39.7 & 113777 & 43135 & 1016 & 95.6 &  95.6 &  0.0 & 30.5 & 5534 & 23318 \\ 
  (100,200,10,10,0.10,1) & TL & 127.3 &  76.3 & 40.1 & 44.9 & 150 & 505 & 2269 & 125.6 & 125.5 &  0.0 & 31.7 & 9037 & 30262 \\ 
  (100,200,10,10,0.10,2) & TL & 128.2 & 117.3 &  8.5 & 47.3 & 72200 & 43895 & 2291 & 128.2 & 128.2 &  0.0 & 33.4 & 9395 & 30865 \\ 
  (100,200,10,10,0.15,1) & TL & 148.8 & 121.5 & 18.3 & 52.1 & 49700 & 25067 & 2694 & 148.2 & 148.2 &  0.0 & 35.1 & 17436 & 38490 \\ 
  (100,200,10,10,0.15,2) & TL & 145.3 & 114.9 & 20.9 & 49.8 & 22400 & 15742 & 1039 & 144.1 & 144.1 &  0.0 & 33.1 & 6447 & 22374 \\ 
  (100,500,10,10,0.07,1) & TL & 233.1 & 217.6 &  6.6 & 44.0 & 80762 & 48886 & 1811 & 233.1 & 233.1 &  0.0 & 29.4 & 6563 & 39853 \\ 
  (100,500,10,10,0.07,2) & TL & 229 & 215.5 &  5.9 & 48.1 & 88100 & 57686 & 975 & 228.3 & 228.3 &  0.0 & 31.7 & 5120 & 30972 \\ 
  (100,500,10,10,0.10,1) & TL & 315 & 194.5 & 38.3 & 49.4 & 1110 & 3239 & TL & 313.8 & 299.1 &  4.7 & 36.4 & 7886 & 53222 \\ 
  (100,500,10,10,0.10,2) & TL & 311.7 & 215.4 & 30.9 & 49.0 & 3150 & 3779 & TL & 310.5 & 294.7 &  5.1 & 31.3 & 7638 & 47035 \\ 
  (100,500,10,10,0.15,1) & TL & 363.8 & 213.4 & 41.3 & 51.7 & 800 & 1771 & 3487 & 361 & 361.0 &  0.0 & 34.6 & 14020 & 47130 \\ 
  (100,500,10,10,0.15,2) & TL & 394 & 218.8 & 44.5 & 52.0 & 286 & 730 & TL & 392.3 & 299.2 & 23.7 & 31.7 & 251 & 1266 \\ 
  (100,1000,10,10,0.07,1) & TL & 473.4 & 418.2 & 11.7 & 46.0 & 85802 & 45550 & TL & 475.7 & 449.9 &  5.4 & 32.3 & 3869 & 40097 \\ 
  (100,1000,10,10,0.07,2) & TL & 471.4 & 417.1 & 11.5 & 45.5 & 85407 & 50176 & TL & 472.3 & 448.0 &  5.1 & 32.1 & 4155 & 46077 \\ 
  (100,1000,10,10,0.10,1) & TL & 575 & 395.5 & 31.2 & 47.5 & 4400 & 9462 & TL & 571.7 & 536.4 &  6.2 & 34.5 & 5137 & 42547 \\ 
  (100,1000,10,10,0.10,2) & TL & 532.4 & 431.6 & 18.9 & 51.3 & 28800 & 28567 & 2952 & 532.4 & 532.3 &  0.0 & 34.6 & 7006 & 51664 \\ 
  (100,1000,10,10,0.15,1) & TL & 758.2 & 410.5 & 45.9 & 49.6 & 80 & 383 & TL & 745.3 & 669.9 & 10.1 & 36.2 & 3990 & 24286 \\ 
  (100,1000,10,10,0.15,2) & TL & 753.7 & 389.4 & 48.3 & 48.3 & 0 & 45 & TL & 735.7 & 673.7 &  8.4 & 36.4 & 4782 & 29665 \\ \midrule
  Average & 1836.0 & 152.5 & 113.9 & 12.8 & 46.6 & 14822.3 & 19900.2 & 1185.7 & 151.0 & 143.9 & 2.0 & 30.1 & 3381.1 & 18652.0 \\ 
   \midrule
\end{tabular}
{}
\end{table}

\end{landscape}

\end{document}